\numberwithin{equation}{section}
\theoremstyle{plain}
\newtheorem{theorem}{Theorem}[section]
\newtheorem{lemma}{Lemma}[section]
\newtheorem{corollary}{Corollary}[section]
\newtheorem*{definition*}{Definition}
\newtheorem*{lemma*}{Lemma}
\theoremstyle{definition}
\newtheorem{remark}{Remark}[section]
\newtheorem{example}{Example}
\newtheorem*{acknowledgement*}{Acknowledgement}
\newcommand{\dx}{\, \mathrm{d} x}
\newcommand{\ds}{\, \mathrm{d} s}
\newcommand{\dt}{\, \mathrm{d} t}
\newcommand{\dtau}{\, \mathrm{d} \tau}
\newcommand{\dGamma}{\, \mathrm{d} \Gamma}
\newcommand{\R}{\mathbb{R}}
\newcommand{\N}{\mathbb{N}}
\newcommand{\E}{\mathscr{E}}
\newcommand*{\const}{\operatorname*{const}}
\newcommand*{\hull}{\operatorname*{span}}
\newcommand*{\divergence}{\operatorname*{div}}
\newcommand*{\curl}{\operatorname*{curl}}
\newcommand*{\supp}{\operatorname*{supp}}
\newcommand*{\esssup}{\operatorname*{ess\,sup}}
\begin{document}
\title[Well-posedness of the Maxwell equations with nonlinear Ohm law]%
{Well-posedness of the Maxwell equations\\
  with nonlinear Ohm law}
\author[Jens A.~Griepentrog and Joachim Naumann]%
{Jens A.~Griepentrog and Joachim Naumann*}
\dedicatory{Institut f\"{u}r Mathematik, Humboldt-Universit\"{a}t zu Berlin\\
  Unter den Linden 6, D--10099 Berlin}
\thanks{\emph{Email addresses}: \texttt{griepent@math.hu-berlin.de};
  \texttt{jnaumann@math.hu-berlin.de}}
\thanks{*Corresponding author}
\keywords{Maxwell equations, electromagnetic energy, weak solution,
  well-posedness, energy equality, Faedo-Galerkin method.}
\subjclass[2010]{35A01, 35A02, 35B45, 35Q61.} 
\date{August 8, 2022. \emph{Revisited:} October 30, 2023}
\begin{abstract}
  This paper is concerned with weak solutions~$(e,h) \in L^2 \times L^2$
  of the Maxwell equations with nonlinear Ohm law
  and under perfect conductor boundary conditions.
  These solutions are defined in terms of integral identities
  with appropriate test functions.
  The main result of our paper is an energy equality
  that holds for \emph{any} weak solution~$(e,h)$.
  The proof of this result makes essential use of the existence
  of time-continuous representatives in the equivalence classes~$(e,h)$.
  As a consequence of the energy equality,
  we prove the well-posedness of the $L^2$-setting of the Maxwell equations
  with regard to the initial-boundary conditions under consideration.
  In addition, we establish the existence of a weak solution
  via the Faedo-Galerkin method.
  An appendix is devoted to the proof of a Carath\'{e}odory solution
  to an initial-value problem for an ordinary differential equation.
\end{abstract}
\maketitle
\thispagestyle{empty}
\section{Introduction}
\label{introduction}
\noindent
Let $\Omega \subset \R^3$ be a bounded domain with smooth boundary
$\Gamma := \partial\Omega$, and let $0 < T < +\infty$.
The evolution of an electromagnetic field in the cylinder
$Q_T := \Omega \times {]0,T[}$ is governed by the Maxwell equations
\begin{align}
  \partial_t (\varepsilon e)
  \, & = \, \curl h - j,
       \label{ampere}
  \\[\smallskipamount]
  \partial_t (\mu h)
  \, & = \, - \curl e,
       \label{faraday}
\end{align}
where $e = e(x,t)$ and $h = h(x,t)$ ($(x,t) \in Q_T$)
represent the electric and magnetic field, respectively.
The symmetric $3 \times 3$ matrices $\varepsilon = \varepsilon(x)$
and $\mu = \mu(x)$ ($x \in \Omega$)
characterize the electric permittivity and the magnetic permeability,
respectively, of the medium under consideration.
The vector field $j$ denotes a current density
(for details, see, e.g.~\cite[Ch.~1]{fabrizio}, \cite[Ch.~6]{jackson},
\cite[Teil~I, \S\S~3--4]{sommerfeld}).

In the present paper we consider vector fields $j$ of the form
\begin{equation*}
  j \, = \, j_0(x,t) + j_1(x,t,e),
  \quad (x,t) \in Q_T,
  \quad e \in \R^3.
\end{equation*}
Here, $j_0 = j_0(x,t)$ represents a given current density field, whereas $j_1$
characterizes the current density caused by the electric field $e$.
The most common constitutive relations between $j_1$ and $e$ are Ohm laws.
\begin{example}
  \label{linear-ohm}
  The well-known \emph{linear Ohm law} reads
  \begin{equation*}
    j_1 \, = \, \sigma(x,t) \, e,
  \end{equation*}
  where $\sigma = \sigma(x,t)$ denotes a symmetric non-negative $3 \times 3$
  matrix which describes the conductivity of the medium. If
  \begin{equation*}
    \sigma(x,t) \, = \, \sigma_0(x,t) \, \delta
    \quad \big( 0 < \sigma_0(x,t) \le \mathrm{const},
    \quad \text{$\delta = (\delta_{kl})_{k,l = 1,2,3}$ unit matrix} \big),
  \end{equation*}
  it follows $U = IR$, where $U = |e|$ voltage, $I = |j_1|$ current
  and $R = 1/\sigma_0(x,t)$ resistance (see~\cite[pp.~19--20]{sommerfeld}).
  \hfill $\Box$
\end{example}
\begin{example}
  \label{nonlinear-ohm}
  Let $\sigma_0(x,t)$ and $\delta$ be as above. Define
  \begin{equation*}
    \sigma(x,t,|e|)
    \, = \, \frac{\sigma_0(x,t)}{(1 + |e|^2)^{1/2}} \, \delta,
    \quad (x,t,e) \in Q_T \times \R^3.
  \end{equation*}
  Then the \emph{nonlinear Ohm law}
  \begin{equation*}
    j_1 \, = \, \sigma(x,t,|e|) \, e
    \, = \, \frac{\sigma_0(x,t)}{(1 + |e|^2)^{1/2}} \, e
  \end{equation*}  
  models the effect of ``asymptotic saturation of current at large voltages''
  in certain semiconductors, i.e.
  \begin{equation*}
    I = |j_1| \nearrow \sigma_0(x,t)
    \quad\text{for $U = |e|$ increasing}.
  \end{equation*}
  We note that the mapping
  \begin{equation*}
    e \longmapsto \frac{1}{(1 + |e|^2)^{1/2}} \, e,
    \quad e \in \R^3,
  \end{equation*}
  is strictly monotone, i.e.~for all $e, \bar{e} \in \R^3$, $e \neq \bar{e}$,
  \begin{equation*}
    \left( \frac{e}{(1 + |e|^2)^{1/2}}
      - \frac{\bar{e}}{(1 + |\bar{e}|^2)^{1/2}}
    \right) \cdot (e - \bar{e})
    \, = \, \frac{1}{(1 + |e|^2)^{1/2}} \, |e - \bar{e}|^2
    \, > \, 0
    \quad\text{if $|e| = |\bar{e}|$}
  \end{equation*}
  and
  \begin{multline*}
    \quad\, \left( \frac{e}{(1 + |e|^2)^{1/2}}
      - \frac{\bar{e}}{(1 + |\bar{e}|^2)^{1/2}}
    \right) \cdot (e - \bar{e})
    \\[\medskipamount]
    \ge \, \left( \frac{|e|}{(1 + |e|^2)^{1/2}}
      - \frac{|\bar{e}|}{(1 + |\bar{e}|^2)^{1/2}}
    \right) \cdot (|e| - |\bar{e}|)
    \, > \, 0
    \quad\text{if $|e| \neq |\bar{e}|$}. \quad
  \end{multline*}
  \hfill $\Box$
\end{example}
\noindent
Let $\sigma: Q_T \times \R^+ \longrightarrow \R^{3 \times 3}$
satisfy the following two conditions
\begin{itemize}
\item[(a)] \emph{growth}:
  for all $(x,t,e) \in Q_T \times \R^3$,
  \begin{equation*}
    \big| \sigma(x,t,|e|) \, e \big| \, \le \, c_1 |e|,
    \quad c_1 = \const > 0;
  \end{equation*}
\item[(b)] \emph{monotonicity}:
  for all $(x,t) \in Q_T$ and all $e, \bar{e} \in \R^3$,
  \begin{equation*}
    \big( \sigma(x,t,|e|) \, e - \sigma(x,t,|\bar{e}|) \, \bar{e} \big)
    \cdot (e - \bar{e}) \, \ge \, 0.
  \end{equation*}
\end{itemize}
Then the Ohm law
\begin{equation*}
  j_1 \, = \, \sigma(x,t,|e|) \, e
\end{equation*}
includes Examples~\ref{linear-ohm} and~\ref{nonlinear-ohm} as special cases.
For developing our $L^2$-theory of~\eqref{ampere}--\eqref{initial},
below we further generalize conditions~(a) and~(b)
(see hypotheses~\eqref{matrices}--\eqref{growth}
in Section~\ref{weak-solutions},
and hypothesis~\eqref{monotonicity} in Section~\ref{energy}).

Remarks on nonlinear Ohm laws can be also found
in~\cite[p.~14]{fabrizio} and~\cite[pp.~256--257]{wellander}
(see also the references listed in this paper).
In~\cite{jochmann-96}, the author studies~\eqref{ampere}, \eqref{faraday}
with $e \longmapsto j_1(\cdot,e)$ monotone and of class $C^1$.
\hfill $\Box$

\bigskip\noindent
Let $n = n(x)$ denote the outward directed unit normal at~$x \in \Gamma$.
We complement system~\eqref{ampere}, \eqref{faraday}
by the boundary and initial conditions
\begin{gather}
  n \times e \, = \, 0
  \quad\text{on $\Gamma \times {]0,T[}$},
  \label{perfect}
  \\[\smallskipamount]
  e \, = \, e_0,
  \quad h \, = \, h_0
  \quad\text{in $\Omega \times \{0\}$},
  \label{initial}
\end{gather}
where $(e_0, h_0)$ are given data.
Boundary condition~\eqref{perfect} models a perfect conductor.
A brief discussion of boundary conditions for the Maxwell equations
can be found in~\cite[p.~30]{sommerfeld}.
The author points out that both the boundary condition~\eqref{perfect}
and the boundary condition $n \times h = 0$ on $\Gamma \times {]0,T[}$
imply vanishing of the integral $\int_\Gamma n \cdot S \dGamma$
(see Section~\ref{weak-solutions}).
\hfill $\Box$

\bigskip\noindent
For notational simplicity, in what follows we write $j(x,t,e)$
(or briefly $j(e)$) in place of the notation $j(x,t,e(x,t))$ ($(x,t) \in Q_T$).

We multiply scalarly~\eqref{ampere} and~\eqref{faraday} by $e$ and $h$,
respectively, and add the equations so obtained. Thus
\begin{equation}
  \frac{1}{2} \, \frac{\partial}{\partial t}
  \big( (\varepsilon e) \cdot e + (\mu h) \cdot h \big)
  + \divergence S + j(e) \cdot e 
  \, = \, 0 \quad\text{in $Q_T$},
  \label{conservation}
\end{equation}
where
\begin{equation*}
  S \, := \, e \times h \; \footnotemark
\end{equation*}
\footnotetext{Note that
  $a \cdot (b \times c) = b \cdot (c \times a) = c \cdot (a \times b)$
  for any $a, b, c \in \R^3$,
  and $\divergence(u \times v) = v \cdot \curl u - u \cdot \curl v$
  for any $u, v \in C^1$.}%
denotes the \emph{Poynting vector of $(e,h)$}. The field $S$
represents the flux of electromagnetic energy through $Q_T$.

Integration of~\eqref{conservation}
over $\Omega \times [0,t]$ ($0 \le t \le T$) gives
\begin{equation}
  \E(t)
  + \int_0^t \int_\Omega \divergence S \dx \ds
  + \int_0^t \int_\Omega j(e) \cdot e \dx \ds
  \, = \, \E(0),
  \quad t \in [0,T],
  \label{balance}
\end{equation}
where
\begin{align*}
  \E(t)
  & \, := \, \frac{1}{2} \int_\Omega
  \big( (\varepsilon e)(x,t) \cdot e(x,t) + (\mu h)(x,t) \cdot h(x,t)
  \big) \dx,
  \\[\medskipamount]
  \E(0)
  & \, = \, \frac{1}{2} \int_\Omega
  \big( (\varepsilon e_0)(x) \cdot e_0(x) + (\mu h_0)(x) \cdot h_0(x)
  \big) \dx
\end{align*}
(cf.~\eqref{initial}). The non-negative function $\E(t)$ represents
the electromagnetic energy of $(e,h)$ at time $t$.
Equation~\eqref{balance} is called \emph{balance of electromagnetic energy}
(or \emph{Poynting theorem}).
The term $j(e) \cdot e$ in equation~\eqref{balance}
characterizes the conversion of electromagnetic energy into heat
(see, e.g.~\cite[pp.~236--237]{jackson}, \cite[pp.~25--26]{sommerfeld}). 

We next combine the divergence theorem with boundary condition~\eqref{perfect}
to obtain
\begin{equation*}
  \int_\Omega (\divergence S)(x,t) \dx
  \, = \, \int_\Gamma n(x) \cdot S(x,t) \dGamma(x)
  \, = \, 0
\end{equation*}
for all $t \in {]0,T[}$.
Thus, equation~\eqref{balance} turns into the \emph{energy equality}
\begin{equation}
  \E(t)
  + \int_0^t \int_\Omega j(e) \cdot e \dx \ds
  \, = \, \E(0),
  \quad t \in [0,T]
  \label{equality}
\end{equation}
(see, e.g.~\cite{fabrizio}, \cite{sommerfeld}). 

The equations~\eqref{balance} and~\eqref{equality}
are fundamental to the theory of electromagnetism.
This aspect has been discussed in great detail
(with $\R^3$ in place of $\Omega$) by \textsc{J.~C.~Maxwell}
in his celebrated work~\cite[pp.~486--488]{maxwell}.
\hfill $\Box$

We note that the scalar function $\E(t)$ introduced above
is well-defined (for a.e.~$t \in [0,T]$) for vector fields
$(e,h) \in L^2(Q_T)^3 \times L^2(Q_T)^3$,
provided the entries of the matrices $\varepsilon(\cdot)$ and $\mu(\cdot)$
are bounded measurable functions in $\Omega$.
\section{Weak solutions}
\label{weak-solutions}
\subsection*{Integral identities for classical solutions}
Let $\Omega \subset \R^3$ be a bounded open set with smooth boundary~$\Gamma$.
To motivate the definition of weak solutions of~\eqref{ampere}--\eqref{initial}
which will be introduced below, we consider a classical solution
$(e,h) \in C^1\big(\overline{\Omega} \times [0,T]\big){}^3
\times C^1\big(\overline{\Omega} \times [0,T]\big){}^3$
of~\eqref{ampere}--\eqref{initial} and test functions
$(\Phi,\Psi) \in C^1\big(\overline{\Omega} \times [0,T]\big){}^3
\times C^1\big(\overline{\Omega} \times [0,T]\big){}^3$ such that
\begin{equation}
  \Phi(\cdot,T) \, = \, \Psi(\cdot,T) \, = \, 0
  \quad\text{in $\Omega$}.
  \label{final}
\end{equation}
We multiply~\eqref{ampere} and~\eqref{faraday} scalarly by $\Phi$ and $\Psi$,
respectively, integrate over $Q_T$ and integrate by parts
with respect to $t$ the terms $\partial_t (\varepsilon e) \cdot \Phi$
and $\partial_t (\mu h) \cdot \Psi$. 
Observing~\eqref{final} and initial conditions~\eqref{initial} we obtain
\begin{gather}
  -\int_{Q_T}
  (\varepsilon e) \cdot \partial_t \Phi \dx \dt
  + \int_{Q_T}
  \big( {-}\curl h + j(e) \big) \cdot \Phi \dx \dt
  \, = \, \int_\Omega
  (\varepsilon e_0) \cdot \Phi(\cdot,0) \dx,
  \label{ampere-smooth}
  \\[\medskipamount]
  -\int_{Q_T}
  (\mu h) \cdot \partial_t \Psi \dx \dt
  + \int_{Q_T}
  (\curl e) \cdot \Psi \dx \dt
  \, = \, \int_\Omega
  (\mu h_0) \cdot \Psi(\cdot,0) \dx.
  \label{faraday-smooth}
\end{gather}

Next, we apply the Green formula
\begin{equation}
  \int_\Omega (\curl a) \cdot b \dx
  - \int_\Omega a \cdot \curl b \dx
  \, = \, \int_\Gamma (n \times a) \cdot b \dGamma,
  \quad a, b \in C^1(\overline{\Omega})^3
  \label{green}
\end{equation}
to
\begin{equation*}
  a = -h(\cdot,t),
  \quad b = \Phi(\cdot,t)
  \quad\text{such that}\quad n \times \Phi(\cdot,t) = 0
  \quad\text{on $\Gamma \times {]0,T[}$}
\end{equation*}
resp.
\begin{equation*}
  a = e(\cdot,t) \quad\text{(observing~\eqref{perfect})},
  \quad b = \Psi(\cdot,t)
\end{equation*}
($t \in {]0,T[}$) in the second integral of the left-hand side
of~\eqref{ampere-smooth} and~\eqref{faraday-smooth}.
Thus, \eqref{ampere-smooth} and~\eqref{faraday-smooth}
turn into the \emph{integral identities}
\begin{gather}
  -\int_{Q_T}
  (\varepsilon e) \cdot \partial_t \Phi \dx \dt
  + \int_{Q_T}
  \big( {-}h \cdot \curl \Phi + j(e) \cdot \Phi \big) \dx \dt
  \, = \, \int_\Omega
  (\varepsilon e_0) \cdot \Phi(\cdot,0) \dx,
  \label{ampere-strong}
  \\[\medskipamount]
  -\int_{Q_T}
  (\mu h) \cdot \partial_t \Psi \dx \dt
  + \int_{Q_T}
  e \cdot \curl \Psi \dx \dt
  \, = \, \int_\Omega
  (\mu h_0) \cdot \Psi(\cdot,0) \dx.
  \label{faraday-strong}
\end{gather}

If the entries of the matrices $\varepsilon(\cdot)$ and $\mu(\cdot)$
are bounded measurable functions in $\Omega$,
if $j(e) \in L^2(Q_T)^3$
and $(e_0,h_0) \in L^2(\Omega)^3 \times L^2(\Omega)^3$,
then all the integrals in~\eqref{ampere-strong} and~\eqref{faraday-strong}
are well-defined for
$(e,h) \in L^2(Q_T)^3 \times L^2(Q_T)^3$
and an appropriate class of test functions $(\Phi,\Psi)$.
More specifically,
let $\Phi \in C^1\big(\overline{\Omega} \times [0,T]\big){}^3$
satisfy~\eqref{final} and suppose that
\begin{equation}
  \int_\Omega (\curl \Phi(\cdot,t)) \cdot z \dx
  \, = \, \int_\Omega \Phi(\cdot,t) \cdot \curl z \dx
  \quad \text{for all $t \in {]0,T[}$
    and all $z \in C^1(\overline{\Omega})^3$}.
  \label{ampere-test}
\end{equation}
Clearly, \eqref{ampere-test} holds true
when $n \times \Phi = 0$ on $\Gamma \times {]0,T[}$ (see~\eqref{green}).
We note that~\eqref{ampere-test} does make sense
regardless of whether the boundary $\Gamma$ is smooth or not.

Thus, appropriate conditions
for $(\Phi,\Psi) \in L^2(Q_T)^3 \times L^2(Q_T)^3$ are
\begin{equation*}
  \text{$(\curl \Phi, \curl \Psi) \in L^2(Q_T)^3 \times L^2(Q_T)^3$},
  \quad\text{$\Phi$ satisfies~\eqref{ampere-test}},
\end{equation*}
along with $(\partial_t \Phi, \partial_t \Psi)
\in L^2(Q_T)^3 \times L^2(Q_T)^3$ and~\eqref{final}.
\subsection*{Definition of weak solutions}
Let $\Omega \subset \R^3$ be an open set. We define
\begin{align*}
  V
  & := \Bigg\{ u \in L^2(\Omega)^3; \,
    \text{there exists $f \in L^2(\Omega)^3$ such that}
  \\
  & \qquad \int_\Omega u \cdot \curl \varphi \dx
    \, = \, \int_\Omega f \cdot \varphi \dx
    \;\;\text{for all $\varphi \in C^\infty_0(\Omega)^3$}
    \Bigg\},
\end{align*}
i.e., the vector field $u \in L^2(\Omega)^3$ is in $V$,
if the distribution $\curl u$ can be represented by $f \in L^2(\Omega)^3$.
We identify this distribution with $f$.
$V$ is a separable Hilbert space with respect to the scalar product
\begin{equation*}
  (u,v)_V \, := \,
  \int_\Omega \big( u \cdot v + (\curl u) \cdot \curl v \big) \dx,
  \quad u, v \in V.
\end{equation*}
We next introduce the closed subspace
\begin{equation*}
  V_0 \, := \, \text{closure of $C^\infty_0(\Omega)^3$ in $V$}.
\end{equation*}
The spaces $V$ and $V_0$ are usually denoted by $H(\curl;\Omega)$
and $H_0(\curl;\Omega)$, respectively
(cf.~\cite[Thm.~2.11, Thm.~2.12, pp.~34--35]{girault},
\cite[Ch.~IX, pp.~204--206]{dautray}).
\begin{remark}
  \label{perfect-spaces}
  Define
  \begin{equation*}
    W_0 \, := \,
    \, \Bigg\{ u \in V; \int_\Omega (\curl u) \cdot \psi \dx
    \, = \, \int_\Omega u \cdot \curl \psi \dx
    \;\;\text{for all $\psi \in V$} \Bigg\}.
  \end{equation*}
  To our knowledge, the analogue of this space with $H^1(\Omega)^3$
  in place of~$V$ has been introduced for the first time
  in~\cite[pp.~215--216]{leis} and was then used by other authors,
  see e.g.~\cite{jochmann-00} and~\cite{weber}.
  It is readily seen that $V_0 \subset W_0$. In fact, we have
  \begin{equation*}
    V_0 \, = \, W_0.
  \end{equation*}
  Indeed, following an argument
  by~\cite[Ch.~IX, Part A, \S~1.2, Proof of~Thm.~2, p.~207]{dautray},
  take $u_0 \in W_0$ such that $(u_0,w)_V = 0$ for all $w \in V_0$.
  Writing $\psi_0 = \curl u_0$ it follows
  \begin{equation*}
    \int_\Omega \psi_0 \cdot \curl \varphi \dx
    \, = \, - \int_\Omega u_0 \cdot \varphi \dx
    \quad\text{for all $\varphi \in C^\infty_0(\Omega)^3$}.
  \end{equation*}
  Thus, $\psi_0 \in V$ and $\curl \psi_0 = -u_0$.
  Therefore, by the definition of $W_0$,
  \begin{equation*}
    \int_\Omega |u_0|^2 \dx
    \, = \, - \int_\Omega u_0 \cdot \curl \psi_0 \dx
    \, = \, - \int_\Omega (\curl u_0) \cdot \psi_0 \dx
    \, = \, - \int_\Omega |\!\curl u_0|^2 \dx.
  \end{equation*}
  Hence, $u_0 = 0$.
  \hfill $\Box$
\end{remark}
\begin{remark}
  Let $\Omega \subset \R^3$ be a bounded open set the boundary $\Gamma$
  of which is locally representable by Lipschitz graphs.
  Then there exists a linear continuous mapping
  \begin{equation*}
    \gamma_\tau: V \longrightarrow H^{-1/2}(\Gamma)^3
    \; \footnotemark
  \end{equation*}
  \footnotetext{For the definition and the properties of the spaces
    $H^s(\Gamma)$ ($s > 0$ real) see, e.g., \cite[Ch.~2, \S\S~3.8, 5.4]{necas}.
    By $\langle z^*,z \rangle_{H^{1/2}(\Gamma)^3}$
    we denote the value of $z^* \in H^{-1/2}(\Gamma)^3$
    (dual space of $H^{1/2}(\Gamma)^3)$ at $z \in H^{1/2}(\Gamma)^3$.}%
  such that
  \begin{equation*}
    \gamma_\tau(u) \, = \, n \times (u|_\Gamma)
    \quad\text{for all $u \in C^1(\overline{\Omega})^3$},
  \end{equation*}
  and
  \begin{equation*}
    \int_\Omega (\curl u) \cdot \psi \dx
    - \int_\Omega u \cdot \curl \psi \dx
    \, = \, \langle \gamma_\tau(u), \psi \rangle_{H^{1/2}(\Gamma)^3}
  \end{equation*}
  for all $u \in V$ and all $\psi \in H^1(\Omega)^3$
  (see, e.g.~\cite{amrouche}, \cite[Ch.~IX, Part A, \S~1.2]{dautray},
  \cite[Thm.~3.26, Thm.~3.33]{monk}). It follows
  \begin{equation*}
    V_0 \, = \, \big\{ u \in V; \;
    \gamma_\tau(u) = 0 \;\; \text{in $H^{-1/2}(\Gamma)^3$} \big\}.
  \end{equation*}
  For a precise description of the image of the tangential trace
  mapping~$\gamma_\tau$, cf.~\cite{buffa}, \cite{costabel}.
  \hfill $\Box$
\end{remark}
\begin{remark}
  An example of a bounded domain the boundary of which
  \emph{cannot} be represented locally by Lipschitz graphs
  can be found in~\cite[p.~39, Fig.~3.1 (``crossed bricks'')]{monk}.
  Domains of this type seem to be relevant in electrical engineering.
  We note that our approach to the weak formulation
  of~\eqref{ampere}--\eqref{initial} based on the spaces~$V$ and~$V_0$
  we introduced above, does not make any assumption on the boundary
  of the underlying domain.
  \hfill $\Box$
\end{remark}
\begin{remark}
  \label{complementary-spaces}
  By definition and Remark~\ref{perfect-spaces} one sees immediately
  that the spaces $X = V_0$ and $Y = V$ satisfy the following condition:
  \begin{equation}
    \left\{
      \begin{aligned}
        \, & \text{\emph{$X$ and $Y$ are closed subspaces of $V$
             with $V_0 \subset X$ and $V_0 \subset Y$}},
        \\[\medskipamount]
           & X \, = \, \Bigg\{ u \in V;
             \int_\Omega (\curl u) \cdot \psi \dx
             \, = \, \int_\Omega u \cdot \curl \psi \dx
             \;\;\text{\emph{for all} $\psi \in Y$} \Bigg\},
        \\
           & Y \, = \, \Bigg\{ u \in V;
             \int_\Omega (\curl u) \cdot \varphi \dx
             \, = \, \int_\Omega u \cdot \curl \varphi \dx
             \;\;\text{\emph{for all} $\varphi \in X$} \Bigg\}.
      \end{aligned}
      \label{complement} \tag{H0}
    \right.
  \end{equation}
  With regard to the spaces~$X$ and~$Y$ of spatial test functions
  (see~Remark~\ref{variational}),
  hypothesis~\eqref{complement} turns out to be sufficient
  for the proof of the main results of the present paper.
  In particular, this approach suits well to an energy equality
  of type~\eqref{equality}.
  \hfill $\Box$
\end{remark}
\begin{remark}
  Let hypothesis~\eqref{complement} be satisfied.
  Then, for $u \in L^2(\Omega)^3$ the following conditions are equivalent:
  \begin{itemize}
  \item[(i)] $u \in X$.
    \smallskip
  \item[(ii)] \emph{There exists $g = g(u) \in L^2(\Omega)^3$ such that}
    \begin{equation}
      \int_\Omega u \cdot \curl \psi \dx
      \, = \, \int_\Omega g \cdot \psi \dx
      \quad\text{\emph{for all $\psi \in Y$}}.
      \label{leis}
    \end{equation}
  \end{itemize}
  To prove this it suffices to show (ii) $\Longrightarrow$ (i). The equation
  in~\eqref{leis} evidently holds for all $\psi \in C^\infty_0(\Omega)^3$.
  This means that the distribution $\curl u$ is represented by $g$.
  Hence, $u \in V$. Again appealing to~\eqref{leis} gives
  \begin{equation*}
    \int_\Omega (\curl u) \cdot \psi \dx
    \, = \, \int_\Omega g \cdot \psi \dx
    \, = \, \int_\Omega u \cdot \curl \psi \dx
    \quad\text{for all $\psi \in Y$}, 
  \end{equation*}
  i.e.~$u \in X$.
  
  Due to the symmetry of hypothesis~\eqref{complement}
  with respect to~$X$ and~$Y$, for $u \in L^2(\Omega)^3$ the conditions
  \begin{itemize}
  \item[(iii)] $u \in Y$.
    \smallskip
  \item[(iv)] \emph{There exists $f = f(u) \in L^2(\Omega)^3$ such that}
    \begin{equation}
      \int_\Omega u \cdot \curl \varphi \dx
      \, = \, \int_\Omega f \cdot \varphi \dx
      \quad\text{\emph{for all $\varphi \in X$}}.
      \label{leis-symmetry}
    \end{equation}
  \end{itemize}
  are equivalent as well.
  \hfill $\Box$    
\end{remark}
\noindent
We introduce more notations.
Let $U$ be a real normed space with norm $|\cdot|_U$.
By $L^p(0,T;U)$ $(1 \le p \le +\infty)$
we denote the vector space of equivalence classes
of strongly measurable functions $u: [0,T] \longrightarrow U$
such that the function $t \longmapsto |u(t)|_U$ is in $L^p(0,T)$.
The norm on $L^p(0,T;U)$ is given by
\begin{equation*}
  \|u\|_{L^p(0,T;U)} \, := \,
  \begin{cases}
    \, \left( \displaystyle\int_0^T |u(t)|_U^p \dt \right)^{1/p}
    & \text{if $1 \le p < +\infty$},
    \\
    \, \displaystyle\esssup_{t \in ]0,T[} |u(t)|_U
    & \text{if $p = +\infty$},
  \end{cases}
\end{equation*}
(for details see, e.g.~\cite[Ch.~III, \S~3; Ch.~IV, \S~3]{bourbaki},
\cite[Appendice, pp.~137--140]{brezis},
\cite[Ch.~1]{droniou}, \cite[Ch.~23, \S~2]{zeidler-a}).
If $U$ is a Banach space, then so is $L^p(0,T;U)$.

Let $H$ be a real Hilbert space with scalar product $(\cdot\,,\cdot)_H$.
Then $L^2(0,T;H)$ is a Hilbert space with respect to the scalar product
\begin{equation*}
  (u,v)_{L^2(0,T;H)}
  \, := \, \int_0^T (u(t),v(t))_H \dt.
\end{equation*}

Given $u \in L^p(Q_T)$ $(1 \le p < +\infty)$, we define
\begin{equation*}
  [u](t) \, := \, u(\cdot,t)
  \quad\text{for a.e. $t \in [0,T]$}.
\end{equation*}
By the Fubini theorem, $[u] \in L^p(0,T;L^p(\Omega))$ and
\begin{equation*}
  \int_{Q_T} |u(x,t)|^p \dx \dt
  \, = \, \int_0^T \|[u](t)\|_{L^p(\Omega)}^p \dt.
\end{equation*}
It is easily seen that the map $u \longmapsto [u]$ is a linear isometry
from $L^p(Q_T)$ onto $L^p(0,T;L^p(\Omega))$.
Throughout our paper we identify these spaces.
\hfill $\Box$
\subsection*{Definition of weak solutions}
To introduce the notion of weak solutions of \eqref{ampere}--\eqref{initial},
we make the following hypotheses on $\varepsilon$, $\mu$ in~\eqref{ampere},
\eqref{faraday}, the field~$j$, and $(e_0,h_0)$ in~\eqref{initial}:
\begin{align}
  & \left\{
    \begin{aligned}
      \, & \text{\emph{the entries of the $3 \times 3$ matrices
          $\varepsilon(\cdot)$ and $\mu(\cdot)$}}
      \\
      & \text{\emph{are bounded measurable functions in $\Omega$}};
    \end{aligned}
        \label{matrices} \tag{H1}
        \right.
  \\[\medskipamount]
  & \quad j(x,t,\xi) \, = \, j_0(x,t) + j_1(x,t,\xi),
    \quad (x,t,\xi) \in Q_T \times \R^3,
    \notag
    \intertext{\emph{where}}
  & \left\{
           \begin{aligned}
      \, & \text{$j_0 \in L^2(Q_T)^3$},
      \\
      & \text{$j_1: Q_T \times \R^3 \longrightarrow \R^3$
        \emph{is a Carath\'{e}odory function, i.e.}},
      \\
      & \text{$(x,t) \longmapsto j_1(x,t,\xi)$
        \emph{is measurable in $Q_T$ for all $\xi \in \R^3$}},
      \\
      & \text{$\xi \longmapsto j_1(x,t,\xi)$
        \emph{is continuous in $\R^3$ for a.e.~$(x,t) \in Q_T$}};
    \end{aligned}
        \label{measurability} \tag{H2}
        \right.
  \\[\medskipamount]
  & \left\{
    \begin{aligned}
      \, & \text{\emph{there exists $c_1 = \const > 0$ such that}}
      \\
      & \text{$|j_1(x,t,\xi)| \le c_1 |\xi|$
        \quad \emph{for all $(x,t,\xi) \in Q_T \times \R^3$}};
    \end{aligned}
        \label{growth} \tag{H3}
        \right.
\end{align}
\emph{and}
\begin{equation}
  (e_0, h_0) \in L^2(\Omega)^3 \times L^2(\Omega)^3.
  \label{initial-values} \tag{H4}
\end{equation}
\begin{remark}
  1.~Given any measurable vector field $u:Q_T \longrightarrow \R^3$,
  from~\eqref{measurability} it follows that the mapping
  $(x,t) \longmapsto j(x,t,u(x,t))$ is measurable in $Q_T$.
  Hence, by~\eqref{growth},
  \begin{equation*}
    j(u) \, = \, j(\cdot\,, \cdot\,, u(\cdot\,,\cdot)) \in L^2(Q_T)^3
    \quad\text{for all $u \in L^2(Q_T)^3$}.
  \end{equation*}

  2.~Hypotheses~\eqref{measurability}, \eqref{growth} on $j_1$
  include the Ohm laws considered in Examples~\ref{linear-ohm}
  and~\ref{nonlinear-ohm} in Section~\ref{introduction}.
  \hfill $\Box$
\end{remark}

\noindent
The following definition presents an extension of
integral identities~\eqref{ampere-strong} and~\eqref{faraday-strong}
for $(e,h)$ to the $L^2$-framework.
\begin{definition*}
  Let hypotheses~\emph{\eqref{matrices}--\eqref{initial-values}} hold.
  The pair
  \begin{equation*}
    (e,h) \in L^2(Q_T)^3 \times L^2(Q_T)^3
  \end{equation*}
  is called \emph{weak solution of~\eqref{ampere}--\eqref{initial}} if
  \begin{align*}
    & \left\{
      \begin{aligned}
        & - \displaystyle \int_{Q_T}
          (\varepsilon e) \cdot \varphi \, \dot{\zeta} \dx \dt
          + \int_{Q_T}
          \big( {-}h \cdot \curl \varphi
          + j(e) \cdot \varphi \big) \, \zeta \dx \dt
          \, = \, \int_\Omega
          (\varepsilon e_0) \cdot \varphi \, \zeta(0) \dx
        \\
        & \quad\text{for all $\varphi \in V_0$
          and all $\zeta \in C^1([0,T])$ such that $\zeta(T) = 0$};
      \end{aligned}
          \right.
    \\[\medskipamount]
    & \left\{
      \begin{aligned}
        & - \displaystyle \int_{Q_T}
          (\mu h) \cdot \psi \, \dot{\theta} \dx \dt
          + \int_{Q_T}
          e \cdot \curl \psi \, \theta \dx \dt
          \, = \, \int_\Omega
          (\mu h_0) \cdot \psi \, \theta(0) \dx
        \\
        & \quad\text{for all $\psi \in V$
          and all $\theta \in C^1([0,T])$ such that $\theta(T) = 0$}.
      \end{aligned}
          \right.
  \end{align*}
\end{definition*}
Let $\Gamma = \partial \Omega$ be smooth.
Then from the discussion above it follows that every classical solution
of~\eqref{ampere}--\eqref{initial} is also a weak solution
of~\eqref{ampere}--\eqref{initial}.
We note that our definition of weak solutions is closely related
to~\cite[Ch.~VII, \S~4.2]{duvaut}, \cite{eller} and~\cite{jochmann-00},
but it differs from the one in~\cite[p.~326]{fabrizio}.

For the case of \emph{linear Ohm laws}
existence theorems for weak solutions of~\eqref{ampere}--\eqref{initial}
have been proved in~\cite[Ch.~VII, \S~4.3]{duvaut} and~\cite{eller}
(cf.~also~\cite[Ch.~7, \S~8.3]{fabrizio}).
In~\cite{spitz}, the author proves the local well-posedness
of~\eqref{ampere}--\eqref{initial} for a class of nonlinear Maxwell equations
in spaces of differentiable functions.
\begin{remark}
  \label{variational}
  Suppose that hypotheses \eqref{matrices}--\eqref{initial-values} hold true.
  Note that our functional analytic approach is not restricted to the special
  choice of the spaces $X = V_0$ and $Y = V$ in the previous definition.
  It turns out that the main results of the present paper are valid
  not only for weak solutions of~\eqref{ampere}--\eqref{initial}
  but also for \emph{pairs}
  \begin{equation*}
    (e,h) \in L^2(Q_T)^3 \times L^2(Q_T)^3
  \end{equation*}
  \emph{which satisfy the integral identities}
  \begin{align}
    & \left\{
      \begin{aligned}
        & - \displaystyle \int_{Q_T}
          (\varepsilon e) \cdot \varphi \, \dot{\zeta} \dx \dt
          + \int_{Q_T}
          \big( {-}h \cdot \curl \varphi
          + j(e) \cdot \varphi \big) \, \zeta \dx \dt
          \, = \, \int_\Omega
          (\varepsilon e_0) \cdot \varphi \, \zeta(0) \dx
        \\
        & \quad\text{\emph{for all $\varphi \in X$
          and all $\zeta \in C^1([0,T])$ such that $\zeta(T) = 0$}};
      \end{aligned}
          \right.
          \label{ampere-weak}
    \\[\medskipamount]
    & \left\{
      \begin{aligned}
        & - \displaystyle \int_{Q_T}
          (\mu h) \cdot \psi \, \dot{\theta} \dx \dt
          + \int_{Q_T}
          e \cdot \curl \psi \, \theta \dx \dt
          \, = \, \int_\Omega
          (\mu h_0) \cdot \psi \,\theta(0) \dx
        \\
        & \quad\text{\emph{for all $\psi \in Y$
          and all $\theta \in C^1([0,T])$ such that $\theta(T) = 0$}},
      \end{aligned}
          \right.
          \label{faraday-weak}
  \end{align}
  \emph{provided that the spaces~$X$ and $Y$
    satisfy hypothesis~\eqref{complement}}
  (see Remark~\ref{complementary-spaces}).
  In this case the pair $(e,h)$ is called \emph{solution}
  of~\eqref{ampere-weak}, \eqref{faraday-weak}.
  \hfill $\Box$
\end{remark}
The aim of this paper is to prove that for any initial datum
$(e_0,h_0) \in L^2(\Omega)^3 \times L^2(\Omega)^3$
(with $\Omega$ possibly unbounded), \emph{every} solution
$(e,h) \in L^2(Q_T)^3 \times L^2(Q_T)^3$
of~\eqref{ampere-weak}, \eqref{faraday-weak}
\begin{itemize}
\item has a representative in $C\big([0,T];L^2(\Omega)^3\big)
  \times C\big([0,T];L^2(\Omega)^3\big)$,
  \smallskip
\item obeys an energy equality (which implies well-posedness).
\end{itemize}
Moreover, in Section~\ref{faedo-galerkin} we prove
the existence of a solution of~\eqref{ampere-weak}, \eqref{faraday-weak}
by the aid of the Faedo-Galerkin method.
\subsection*{Existence of the distributional derivatives
  $\boldsymbol{(\varepsilon e)'}$ and $\boldsymbol{(\mu h)'}$}
We will prove that~\eqref{ampere-weak}, \eqref{faraday-weak}
imply the existence of the $t$-derivatives of $\varepsilon e$ and $\mu h$
in the sense of vector-valued distributions.
To this end, we introduce some more notation.

Let $U$ be a real normed space. By $U^*$ we denote the dual space of $U$,
and by $\langle x^*, x \rangle_U$
the dual pairing between $x^* \in U^*$ and $x \in U$.
Let $H$ be a real Hilbert space with scalar product $(\cdot \,, \cdot)_H$
and suppose that $U$ is continuously and densely embedded into $H$.
We identify $H$ with its dual space $H^*$ via the Riesz representation theorem
to obtain
\begin{gather}
  H \subset U^*
  \quad\text{continuously},
  \notag \\[\smallskipamount]
  \langle z, x \rangle_U
  \, = \, (z, x)_H
  \quad\text{for all $z \in H$ and all $x \in U$}
  \label{embedding}
\end{gather}
(cf.~\cite[Ch.~23, \S~4]{zeidler-a}).
If $U$ is reflexive, then $H \subset U^*$ densely.

Next, let $U$ and $W$ be two real normed spaces
such that $U \subset W$ continuously and densely. Given $u \in L^1(0,T;U)$,
we identify~$u$ with an element in $L^1(0,T;W)$ and denote it by $u$ again.
An element $w \in L^1(0,T;W)$ will be called
\emph{derivative of $u$ in the sense of distributions from $[0,T]$ into $W$} if
\begin{equation*}
  \int_0^T \dot{\zeta}(t) \, u(t) \dt
  \, = \, - \int_0^T \zeta(t) \, w(t) \dt
  \quad\text{in $W$}
\end{equation*}
for all $\zeta \in C_0^\infty({]0,T[})$ and denoted by
\begin{equation*}
  u' \, := \, w
\end{equation*}  
(see~\cite[Appendice, Prop.~A.6, p.~154]{brezis}, \cite[Ch.~2.1]{droniou},
\cite[Ch.~1, \S~1.3]{magenes} and~\cite[Ch.~23, \S\S~5--6]{zeidler-a}).
The derivative $u'$ is uniquely determined, if $W^*$ is separable.
If $W$ is reflexive, then there exists an absolutely continuous representative
$\tilde{u}:[0,T] \longrightarrow W$
in the equivalence class $L^1(0,T;W)$ such that
\begin{equation}
  \tilde{u}(t) \, = \, \tilde{u}(0)
  + \displaystyle \int_0^t u'(s) \ds
  \quad\text{for all $t \in [0,T]$}
  \label{absolute}
\end{equation}
(see~\cite[Appendice, Prop.~A.3, p.~145]{brezis}).

Let $U$ and $H$ be as above and suppose that $U \subset H$
continuously and densely. 
Then we have the following formula of integration by parts:
\begin{equation}
  \left\{
    \begin{aligned}
      & \, \text{\emph{For every $u \in L^1(0,T;H)$
        such that $u' \in L^1(0,T;U^*)$}},
      \\
      \, & \displaystyle \int_0^T
      \langle \alpha(t) \, u'(t), x \rangle_U \dt
      \, = \, \langle \alpha(T) \, \tilde{u}(T)
      - \alpha(0) \, \tilde{u}(0), x \rangle_U
      - \int_0^T (\dot{\alpha}(t) \, u(t), x)_H \dt
      \\
      & \, \text{\emph{for all $\alpha \in C^1([0,T])$
        and all $x \in U$}},
    \end{aligned}
  \right.
  \label{parts}
\end{equation}
where $\tilde{u}:[0,T] \longrightarrow U^*$
denotes the absolutely continuous representative
in the equivalence class $L^1(0,T;U^*)$. Indeed, define
\begin{equation*}
  f(t) \, = \, \langle \tilde{u}(t), \alpha(t) x \rangle_U,
  \quad t \in [0,T].
\end{equation*}
The function~$f$ is absolutely continuous on $[0,T]$. We obtain
\begin{equation*}
  f'(t) \, = \, \langle u'(t), \alpha(t) x \rangle_U
  + \langle \tilde{u}(t), \dot{\alpha}(t) x \rangle_U
  \quad\text{for a.e.~$t \in [0,T]$}
\end{equation*}
(see~\cite[Appendice, Prop.~A.6, p.~154]{brezis}). It follows
\begin{multline*}
  \int_0^T \langle u'(t), \alpha(t) x \rangle_U \dt
  + \int_0^T \langle \tilde{u}(t), \dot{\alpha}(t) x \rangle_U \dt
  \\
  \, = \, \int_0^T f'(t) \dt
  \, = \, f(T) - f(0)
  \, = \, \langle \tilde{u}(T), \alpha(T) x \rangle_U
  - \langle \tilde{u}(0), \alpha(0) x \rangle_U,
\end{multline*}  
i.e., \eqref{parts} holds true, cf.~\eqref{embedding} and~\eqref{absolute};
note
\begin{equation*}
  \int_0^T \langle \tilde{u}(t), y \rangle_U \dt
  \, = \, \int_0^T (u(t),y)_H \dt
  \quad\text{for all $y \in U$}.
\end{equation*}
 
Formula~\eqref{parts} will be needed for the proof
of Theorems~\ref{abstract-formulation} and~\ref{existence}.
\hfill $\Box$

\bigskip\noindent
Let hypothesis~\eqref{complement} be satisfied.
We make use of the above notations with
\begin{equation*}
  U = X
  \quad\text{resp.}\quad
  U = Y,
  \quad\text{and}\quad
  H = L^2(\Omega)^3,
\end{equation*}
where $H$ is furnished with the standard scalar product
\begin{equation*}
  (u,v)_H \, := \, \int_\Omega u(x) \cdot v(x) \dx,
  \quad u, v \in H.
\end{equation*}
Then
\begin{equation*}
  H \subset X^*
  \quad\text{resp.}\quad
  H \subset Y^*
  \quad\text{continuously and densely}.
\end{equation*}

The following theorem presents an equivalent definition of the solution
of~\eqref{ampere-weak}, \eqref{faraday-weak} we introduced above.
\begin{theorem}
  \label{abstract-formulation}
  Let hypotheses \emph{\eqref{complement}--\eqref{initial-values}}
  be satisfied and let
  \begin{equation*}
    (e,h) \in L^2(Q_T)^3 \times L^2(Q_T)^3.
  \end{equation*}
  Then the statements~\emph{(i)} and~\emph{(ii)} are equivalent:
  \begin{itemize}
  \item[(i)]
    $(e,h)$ is a solution of~\emph{\eqref{ampere-weak}, \eqref{faraday-weak}}.
    \smallskip
  \item[(ii)]
    There exist the distributional derivatives
    \begin{equation}
      (\varepsilon e)' \in L^2(0,T;X^*),
      \quad (\mu h)' \in L^2(0,T;Y^*),
      \label{derivatives}
    \end{equation}
    which for a.e. $t \in [0,T]$ satisfy the identities
    \begin{gather}
      \langle (\varepsilon e)'(t), \varphi \rangle_X
      + \int_\Omega \big( {-}h(\cdot,t) \cdot \curl \varphi
      + j(e(\cdot,t)) \cdot \varphi \big) \dx
      \, = \, 0
      \quad\text{for all $\varphi \in X$},
      \label{ampere-abstract}
      \\[\medskipamount]
      \langle (\mu h)'(t), \psi \rangle_Y
      + \int_\Omega e(\cdot,t) \cdot \curl \psi \dx
      \, = \, 0
      \quad\text{for all $\psi \in Y$}.
      \label{faraday-abstract}
    \end{gather}
    The absolutely continuous representatives
    \begin{equation*}
      (\varepsilon e)\,\tilde{}: [0,T] \longrightarrow X^*,
      \quad (\mu h)\,\tilde{}: [0,T] \longrightarrow Y^*
    \end{equation*}
    in the equivalence classes
    $\varepsilon e$, $\mu h \in L^2(0,T;H)$ fulfill the initial conditions
    \begin{equation}
      (\varepsilon e)\,\tilde{}\,(0) = \varepsilon e_0
      \;\; \text{in $X^*$},
      \quad (\mu h)\,\tilde{}\,(0) = \mu h_0
      \;\; \text{in $Y^*$}.
      \label{initial-weak}
    \end{equation}
  \end{itemize}
\end{theorem}
\begin{proof}
  (i) $\Longrightarrow$ (ii)
  We identify $\varepsilon e \in L^2(0,T;H)$ with an element
  of the space $L^2(0,T;X^*)$ and deduce from~\eqref{ampere-weak}
  the existence of the distributional derivative
  $(\varepsilon e)' \in L^2(0,T;X^*)$
  and~\eqref{ampere-abstract} for a.e. $t \in [0,T]$.

  Define $\mathscr{F} = \mathscr{F}(e,h) \in \big(L^2(0,T;X)\big){}^*$ by
  \begin{equation}
    \langle \mathscr{F}, \Phi \rangle_{L^2(0,T;X)}
    \, := \, \int_0^T \int_\Omega
    \big( {-}h \cdot \curl \Phi + j(e) \cdot \Phi \big) \dx \dt,
    \quad \Phi \in L^2(0,T;X).
    \label{f-distribution}
  \end{equation}
  The linear isometry $\big(L^2(0,T;X)\big){}^* \cong L^2(0,T;X^*)$
  enables us to identify $\mathscr{F}$ with its isometric image
  in $L^2(0,T;X^*)$ which will be denoted by $\mathscr{F}$ again.
  Thus, $\mathscr{F}(t) \in X^*$ for a.e. $t \in [0,T]$ and
  \begin{equation*}
    \langle \mathscr{F}, \Phi \rangle_{L^2(0,T;X)}
    \, = \, \int_0^T \langle \mathscr{F}(t), \Phi(t) \rangle_X \dt
    \quad\text{for all $\Phi \in L^2(0,T;X)$}.
  \end{equation*}

  Given any $\varphi \in X$ and $\zeta \in C_0^\infty({]0,T[})$,
  we take $\Phi = \Phi(x,t) = \varphi(x) \, \zeta(t)$
  ($(x,t) \in Q_T$) in~\eqref{f-distribution} to obtain
  \begin{align*}
    \Bigg\langle \int_0^T
    \dot{\zeta}(t) (\varepsilon e)(t) \dt, \varphi
    \Bigg\rangle_{\!\! X}
    & \, = \, \int_0^T
      \big( \dot{\zeta}(t) (\varepsilon e)(t), \varphi \big){}_H \dt
      \quad\text{(by~\cite[pp.~420--421]{zeidler-a} and~\eqref{embedding})}
    \\
    & \, = \, \int_0^T \int_\Omega
      \big( {-}h \cdot \curl \Phi + j(e) \cdot \Phi \big) \dx \dt
      \quad\text{(by~\eqref{ampere-weak})}
    \\
    & \, = \, \langle \mathscr{F}, \Phi \rangle_{L^2(0,T;X)}
    \, = \, \Bigg\langle \int_0^T
    \zeta(t) \, \mathscr{F}(t) \dt, \varphi
    \Bigg\rangle_{\!\! X}.
  \end{align*}
  Hence,
  \begin{equation*}
    \int_0^T \dot{\zeta}(t) (\varepsilon e)(t) \dt
    \, = \, \int_0^T \zeta(t) \, \mathscr{F}(t) \dt
    \quad\text{in $X^*$},
  \end{equation*}
  i.e., $\varepsilon e$ has the distributional derivative
  \begin{equation*}
    (\varepsilon e)' \, = \, -\mathscr{F}
    \, \in \, L^2(0,T;X^*). 
  \end{equation*}
  This equation is equivalent to
  \begin{equation}
    \langle (\varepsilon e)'(t), \varphi \rangle_X
    \, = \, \langle -\mathscr{F}(t), \varphi \rangle_X
    \label{distributional-derivative}
  \end{equation}
  for a.e.~$t \in [0,T]$ and all $\varphi \in X$,
  where the set of those $t$ for which~\eqref{distributional-derivative} fails,
  does not depend on $\varphi$. Hence, \eqref{ampere-abstract}.

  We identify $\mu h \in L^2(0,T;H)$ with an element in $L^2(0,T;Y^*)$
  and define
  $\mathscr{G} = \mathscr{G}(e) \in \big(L^2(0,T;Y)\big){}^*$ by
  \begin{equation*}
    \langle \mathscr{G}, \Psi \rangle_{L^2(0,T;Y)}
    \, := \, \int_0^T \int_\Omega e \cdot \curl \Psi \dx \dt,
    \quad \Psi \in L^2(0,T;Y).
  \end{equation*}
  By an analogous reasoning as above
  we obtain the existence of the distributional derivative
  \begin{equation*}
    (\mu h)' \, = \, -\mathscr{G}
    \, \in \, L^2(0,T;Y^*).
  \end{equation*}
  This equation is equivalent to~\eqref{faraday-abstract}.

  We identify $\varepsilon e$, $\mu h \in L^2(0,T;H)$ with elements
  in $L^2(0,T;X^*)$ and $L^2(0,T;Y^*)$, respectively.
  Then~\eqref{derivatives} implies the existence of absolutely continuous
  representatives $(\varepsilon e)\,\tilde{}$ and $(\mu h)\,\tilde{}\,$
  from $[0,T]$ into $X^*$ and $Y^*$, respectively (cf.~\eqref{absolute}).
  We prove the first equality in~\eqref{initial-weak}. To this end,
  in~\eqref{ampere-weak} we take $\zeta \in C^1([0,T])$
  such that $\zeta(0) = 1$ and $\zeta(T) = 0$,
  multiply~\eqref{ampere-abstract} by $\zeta(t)$
  and integrate over $[0,T]$. For any $\varphi \in X$ it follows
  \begin{align*}
    (\varepsilon e_0, \varphi)_H
    & \, = \, - \int_0^T
      \big( (\varepsilon e)(t),
      \varphi \, \dot{\zeta}(t) \big){}_H \dt
      + \int_0^T \int_\Omega
      \big( {-}h \cdot \curl \varphi + j(e) \cdot \varphi
      \big) \, \zeta \dx \dt
    \\
    & \, = \, \int_0^T
      \langle (\varepsilon e)'(t), \varphi \, \zeta(t) \rangle_{X} \, \dt
      + \langle (\varepsilon e)\,\tilde{}\,(0), \varphi \rangle_{X}
      \quad\text{(by~\eqref{parts})}
    \\
    & \quad \, + \, \int_0^T \int_\Omega
      \big( {-}h \cdot \curl \varphi + j(e) \cdot \varphi
      \big) \, \zeta \dx \dt
    \\[\medskipamount]
    & \, = \, \langle (\varepsilon e)\,\tilde{}\,(0),
      \varphi \rangle_X
      \quad\text{(by~\eqref{ampere-abstract})}.
  \end{align*}
  Hence, $\varepsilon e_0 = (\varepsilon e)\,\tilde{}\,(0)$ in $X^*$.
  An analogous reasoning yields the second statement in~\eqref{initial-weak}.

  (ii) $\Longrightarrow$ (i)
  Given $\varphi \in X$ and~$\zeta \in C^1([0,T])$,
  $\zeta(T) = 0$, equation~\eqref{parts} takes the form
  \begin{equation*}
    \int_0^T
    \langle (\varepsilon e)'(t), \varphi \rangle_{X} \, \zeta(t) \dt
    \, = \, - \, \langle (\varepsilon e)\,\tilde{}\,(0),
    \varphi \, \zeta(0) \rangle_{X}
    - \int_0^T \big( (\varepsilon e)(t), \varphi \big){}_H
    \, \dot{\zeta}(t) \dt.
  \end{equation*}
  Combining~\eqref{ampere-abstract} and~\eqref{initial-weak} we obtain
  \begin{equation*}
    \int_\Omega
    (\varepsilon e_0)(x) \cdot \varphi \, \zeta(0) \dx
    + \int_0^T \int_\Omega
    (\varepsilon e) \cdot \varphi \, \dot{\zeta} \dx \dt
    \, = \, \int_0^T \int_\Omega
    \big( {-}h \cdot \curl \varphi + j(e) \cdot \varphi \big)
    \, \zeta \dx \dt,
  \end{equation*}
  i.e., \eqref{ampere-weak} holds true.
  By an analogous argument, \eqref{faraday-weak}
  follows from~\eqref{faraday-abstract} and~\eqref{initial-weak}.
  
  The proof of Theorem~\ref{abstract-formulation} is complete.
\end{proof}
\begin{corollary}
  Let hypotheses~\emph{\eqref{complement}--\eqref{initial-values}} hold
  and let $(e,h) \in L^2(Q_T)^3 \times L^2(Q_T)^3$
  be any solution of \emph{\eqref{ampere-weak}, \eqref{faraday-weak}}.
  Then,
  \begin{align}
    (\varepsilon e)' \in L^2(0,T;H)
    & \quad \Longleftrightarrow \quad
      h \in L^2(0,T;Y);
      \tag{a}
    \\[\smallskipamount]
    (\mu h)' \in L^2(0,T;H)
    & \quad \Longleftrightarrow \quad
      e \in L^2(0,T;X).
      \tag{b}
  \end{align}
\end{corollary}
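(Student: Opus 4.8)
The plan is to read off both equivalences from the pointwise identities~(2.15) and~(2.16) of Theorem~2.1, using only the definitions of $V$ and $V_0$, the facts collected in Remark~2.1 and in part~1 of Remark~2.4, the identifications $H \subset V_0^*$ and $H \subset V^*$ from~(2.11), and elementary properties of the spaces $L^2(0,T;X)$; parts~(a) and~(b) are parallel, so I treat~(a) in detail. For the implication ``$h \in L^2(0,T;V) \Rightarrow (\varepsilon e)' \in L^2(0,T;H)$'', note that $\curl h \in L^2(Q_T)^3$ and that, for $\varphi \in V_0$, the defining relation of $V_0$ gives $\int_\Omega h(\cdot,t) \cdot \curl \varphi \dx = \int_\Omega (\curl h(\cdot,t)) \cdot \varphi \dx$; substituting this into~(2.15) shows $\langle (\varepsilon e)'(t), \varphi \rangle_{V_0} = \int_\Omega (\curl h(\cdot,t) - j(e(\cdot,t))) \cdot \varphi \dx$ for all $\varphi \in V_0$, i.e.\ $(\varepsilon e)'(t) = \curl h(\cdot,t) - j(e(\cdot,t))$ in $V_0^*$ by~(2.11). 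Since $\curl h \in L^2(0,T;H)$ and $j(e) \in L^2(Q_T)^3 = L^2(0,T;H)$ (part~1 of Remark~2.4), this gives $(\varepsilon e)' \in L^2(0,T;H)$.

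For the converse, assume $(\varepsilon e)' \in L^2(0,T;H)$ and pick $g \in L^2(0,T;H)$ with $\langle (\varepsilon e)'(t), \varphi \rangle_{V_0} = (g(t), \varphi)_H$ for a.e.\ $t$ and all $\varphi \in V_0$. Inserting this into~(2.15) gives $\int_\Omega h(\cdot,t) \cdot \curl \varphi \dx = \int_\Omega (g(t) + j(e(\cdot,t))) \cdot \varphi \dx$ for all $\varphi \in V_0$, hence for all $\varphi \in C_c^\infty(\Omega)^3$ (which lie in $V_0$ by part~2 of Remark~2.1); thus the distribution $\curl h(\cdot,t)$ is represented by $g(t) + j(e(\cdot,t)) \in L^2(\Omega)^3$, so $h(\cdot,t) \in V$ with $\curl h(\cdot,t) = g(t) + j(e(\cdot,t))$ for a.e.\ $t$. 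In particular $\curl h \in L^2(0,T;H)$, and since $t \mapsto (h(\cdot,t), \curl h(\cdot,t))$ is strongly measurable into $H \times H$ with values a.e.\ in the closed subspace $\{(u, \curl u) : u \in V\}$, it is strongly measurable into $V$, whence $h \in L^2(0,T;V)$.

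Part~(b) is obtained in the same way from~(2.16): if $e \in L^2(0,T;V_0)$, the defining relation of $V_0$ rewrites $\int_\Omega e(\cdot,t) \cdot \curl \psi \dx$ as $\int_\Omega (\curl e(\cdot,t)) \cdot \psi \dx$ for $\psi \in V$, so $(\mu h)'(t) = -\curl e(\cdot,t) \in H$ and $(\mu h)' \in L^2(0,T;H)$; conversely, if $(\mu h)' \in L^2(0,T;H)$ with $(\mu h)'(t) = k(t) \in H$, then~(2.16) yields $\int_\Omega e(\cdot,t) \cdot \curl \psi \dx = \int_\Omega (-k(t)) \cdot \psi \dx$ for all $\psi \in V$, hence $e(\cdot,t) \in V_0$ with $\curl e(\cdot,t) = -k(t)$ by the criterion in part~1 of Remark~2.1, and the measurability argument above gives $e \in L^2(0,T;V_0)$. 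I expect no real obstacle here; the only step that deserves a word of justification rather than a bare substitution is the passage from $\curl h \in L^2(0,T;H)$ (resp.\ $\curl e \in L^2(0,T;H)$) to $h \in L^2(0,T;V)$ (resp.\ $e \in L^2(0,T;V_0)$), which rests on the standard fact that a strongly measurable function taking values a.e.\ in a closed subspace is strongly measurable into that subspace.
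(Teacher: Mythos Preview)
Your proof is correct and follows essentially the same route as the paper's: both directions of (a) and (b) are read off from the pointwise identities~(2.15) and~(2.16), using $C_c^\infty(\Omega)^3 \subset V_0$ for the ``$\Rightarrow$'' direction of~(a) and the criterion~(2.8) for the ``$\Rightarrow$'' direction of~(b). The only cosmetic difference is that for the ``$\Leftarrow$'' direction of~(a) the paper re-derives the distributional derivative by testing against $\zeta \in C_c^\infty({]0,T[})$ and invoking density of $V_0$ in $H$, whereas you identify $(\varepsilon e)'(t)$ with $\curl h(\cdot,t) - j(e(\cdot,t))$ directly in $V_0^*$ via~(2.11) and then observe the right-hand side lies in $L^2(0,T;H)$; your version is marginally more direct but the content is the same.
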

\begin{proof}[Proof of~\emph{(a)}]
  $(\Longrightarrow)$
  Assume $(\varepsilon e)'(t) \in H$ for some $t \in [0,T]$.
  We may further suppose that $j(e(\cdot,t)) \in H$
  and~\eqref{ampere-abstract} holds for the value $t$ under consideration.
  Thus, by~\eqref{embedding},
  \begin{equation*}
    \int_\Omega
    h(\cdot,t) \cdot \curl \varphi \dx
    \, = \, \int_\Omega
    \big( (\varepsilon e)'(t) + j(e(\cdot,t)) \big) \cdot \varphi \dx
    \quad\text{for all $\varphi \in X$}.
  \end{equation*}
  Following~\eqref{leis-symmetry}, $h(\cdot,t) \in Y$.
  A routine argument gives $h \in L^2(0,T;Y)$.

  $(\Longleftarrow)$ Let $h \in L^2(0,T;Y)$.
  Given any $\zeta \in C^\infty_0({]0,T[})$,
  we multiply~\eqref{ampere-abstract} by $\zeta(t)$,
  integrate over $t \in [0,T]$
  and make use of hypothesis~\eqref{complement} to obtain
  \begin{align*}
    \Bigg( \int_0^T
    \dot{\zeta}(t) (\varepsilon e)(t) \dt, \varphi
    \Bigg)_{\!\! H}
    & = \, \Bigg\langle {-}\int_0^T
      \zeta(t) (\varepsilon e)'(t) \dt, \varphi
      \Bigg\rangle_{\!\! X}
    \\[\medskipamount]
    & = \, \Bigg( \int_0^T \zeta(t)
      \big( {-}\curl h(\cdot,t) + j(e(\cdot,t)) \big) \dt,
      \varphi \Bigg)_{\!\! H}
  \end{align*}
  for any $\varphi \in X$. Therefore,
  \begin{equation*}
    \int_0^T
    \dot{\zeta}(t) (\varepsilon e)(t) \dt
    \, = \, \int_0^T \zeta(t)
    \big( {-}\curl h(\cdot,t) + j(e(\cdot,t)) \big) \dt,
  \end{equation*}
  i.e.~$(\varepsilon e)' \in L^2(0,T;H)$.
\end{proof}
\begin{proof}[Proof of~\emph{(b)}]
  $(\Longrightarrow)$
  As above, assume $(\mu h)'(t) \in H$
  and~\eqref{faraday-abstract} holds for some $t \in [0,T]$. It follows
  \begin{equation*}
    \int_\Omega
    e(\cdot,t) \cdot \curl \psi \dx
    \, = \, -\int_\Omega
    (\mu h)'(t) \cdot \psi \dx
    \quad\text{for all $\psi \in Y$}.
  \end{equation*}
  By~\eqref{leis}, $e(\cdot,t) \in X$.
  Again, by a routine argument we obtain $e \in L^2(0,T;X)$.
  
  The implication $(\Longleftarrow)$ can be proved
  by an argument that parallels item~(a).
\end{proof}
\section{Existence of $t$-continuous representatives \\
  in the equivalence classes $e,h$}
\label{continuity-in-time}
\noindent
Without further reference, throughout
Sections~\ref{continuity-in-time}, \ref{energy}, \ref{faedo-galerkin}
we continue to use the notation $H$ for the space $L^2(\Omega)^3$
with the standard scalar product.
  
Besides~\eqref{matrices}, throughout the remainder of our paper we formulate
two more hypotheses for the matrices $\varepsilon(\cdot)$ and $\mu(\cdot)$:
\begin{equation}
  \text{\emph{$\varepsilon(x)$ and $\mu(x)$
      are symmetric for all $x \in \Omega$}};
  \label{symmetry} \tag{H5}
\end{equation}
\begin{equation}
  \left\{
    \begin{aligned}
      \, & \text{\emph{there exist constants $\varepsilon_* > 0$
          and $\mu_* > 0$ such that}}
      \\
      & \varepsilon(x) \xi \cdot \xi \, \ge \, \varepsilon_* |\xi|^2,
      \quad \mu(x) \xi \cdot \xi \, \ge \, \mu_* |\xi|^2
      \quad \text{\emph{for all $x \in \Omega$ and all $\xi \in \R^3$}}.
    \end{aligned}
    \label{positivity} \tag{H6}
  \right.
\end{equation}
The following result is fundamental to our proof of the well-posedness
of~\eqref{ampere-weak}, \eqref{faraday-weak} in the $L^2$-setting.
\begin{theorem}
  \label{continuity}
  Assume~\emph{\eqref{complement}--\eqref{positivity}}.
  Then for every solution $(e,h) \in L^2(Q_T)^3 \times L^2(Q_T)^3$
  of~\emph{\eqref{ampere-weak}, \eqref{faraday-weak}}
  there exist representatives
  \begin{equation}
    \hat{e}, \hat{h} \in C([0,T];H)
    \label{representatives}
  \end{equation}
  in the equivalence classes $e, h \in L^2(0,T;H)$,%
  \footnote{Remember the isometry $L^2(Q_T)^3 \cong L^2(0,T;H)$.}%
  respectively, that satisfy the initial conditions
  \begin{equation}
    \hat{e}(0) = e_0,
    \quad \hat{h}(0) = h_0
    \quad\text{in $H$}.
    \label{initial-strong}
  \end{equation}
\end{theorem}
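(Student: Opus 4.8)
The plan is to regularise the weak solution $(e,h)$ by convolution in time, to exploit the coupled $\curl$-structure of~(1.1)--(1.2) together with~(H5)--(H6) in order to obtain an \emph{energy identity} for the regularised fields, and to pass to the limit. Throughout, $H=L^2(\Omega)^3$, and for $u\in L^2(0,T;H)$ I write $u_\delta$ for the regularisation of $u$ by convolution with a mollifier $\rho_\delta$ in the time variable. Mollifying identities~(2.15)--(2.16) of Theorem~2.1 in $t$ (convolution in $t$ commutes with $\curl$ in $x$) gives, for $\delta<t<T-\delta$, the relations $(\varepsilon e_\delta)'(t)=\curl h_\delta(t)-j(e)_\delta(t)$ in $V_0^*$ and $(\mu h_\delta)'(t)=-\curl e_\delta(t)$ in $V^*$. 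The decisive observation is that $e_\delta,h_\delta$ are smooth in $t$ with values in $H$, so the left-hand sides lie in $H$; since $j(e)_\delta(t)\in H$ as well (recall $j(e)\in L^2(Q_T)^3$ by~(H2)--(H3)), the distributions $\curl h_\delta(t)$ and $\curl e_\delta(t)$ are represented by $H$-functions, whence $h_\delta(\cdot,t)\in V$ and, by Remark~2.1, $e_\delta(\cdot,t)\in V_0$. In other words, time-regularisation of an $L^2$-weak solution automatically produces the spatial regularity a weak solution itself lacks.

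Next I would derive the energy identity for the regularised fields. Writing $\E_\delta(t):=\tfrac12\big[(\varepsilon e_\delta(t),e_\delta(t))_H+(\mu h_\delta(t),h_\delta(t))_H\big]$, differentiation and the symmetry~(H5) give $\E_\delta'(t)=((\varepsilon e_\delta)'(t),e_\delta(t))_H+((\mu h_\delta)'(t),h_\delta(t))_H$; inserting the relations above and using $e_\delta(\cdot,t)\in V_0$, $h_\delta(\cdot,t)\in V$, the curl-terms cancel since $(\curl h_\delta,e_\delta)_H=(e_\delta,\curl h_\delta)_H=(\curl e_\delta,h_\delta)_H$ by the very definition of $V_0$ (this is the $L^2$-avatar of $\int_\Omega\divergence S\dx=0$). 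Hence
\begin{equation*}
  \E_\delta(t)-\E_\delta(s)=-\int_s^t\big(j(e)_\delta(\tau),e_\delta(\tau)\big)_H\dtau,
  \qquad \delta<s\le t<T-\delta .
\end{equation*}
The right-hand side is $\le\|j(e)_\delta\|_{L^2(Q_T)}\|e_\delta\|_{L^2(Q_T)}\le C$ uniformly in $\delta$; since $\E_\delta$ is bounded at one suitably chosen point, it is uniformly bounded, and~(H6) gives $e,h\in L^\infty(0,T;H)$. The same cancellation applied to $e_\delta-e_{\delta'}$, $h_\delta-h_{\delta'}$ yields an analogous identity for $D_{\delta\delta'}(t):=\tfrac12\big[(\varepsilon(e_\delta-e_{\delta'}),e_\delta-e_{\delta'})_H+(\mu(h_\delta-h_{\delta'}),h_\delta-h_{\delta'})_H\big]$; anchoring it at a base point $s_0$ at which $e_\delta(s_0)\to e(\cdot,s_0)$ and $h_\delta(s_0)\to h(\cdot,s_0)$ in $H$ (any Lebesgue point of $e,h$ will do, because $e_\delta\to e$, $h_\delta\to h$ in $L^2(0,T;H)$), both $D_{\delta\delta'}(s_0)$ and the corresponding integral term tend to $0$ as $\delta,\delta'\to0$, so by~(H6) the families $\{e_\delta\}$, $\{h_\delta\}$ are uniformly Cauchy on every compact subinterval of~${]0,T[}$. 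Their limits $\hat e,\hat h\in C({]0,T[};H)$ represent $e,h$ and satisfy, with $\E^*(t):=\tfrac12\big[(\varepsilon\hat e(t),\hat e(t))_H+(\mu\hat h(t),\hat h(t))_H\big]$, the energy identity $\E^*(t)-\E^*(s)=-\int_s^t(j(\hat e(\tau)),\hat e(\tau))_H\dtau$ for $0<s\le t<T$; in particular $\E^*$ has one-sided limits at $0$ and $T$ and extends to a continuous function on $[0,T]$.

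The hard part is to extend $\hat e,\hat h$ continuously up to $t=0$ (and $t=T$) with the correct values. From $\widetilde{\varepsilon e}\in C([0,T];V_0^*)$ with $\widetilde{\varepsilon e}(0)=\varepsilon e_0$ (Theorem~2.1) and $e\in L^\infty(0,T;H)$ it follows that $\hat e(t)\rightharpoonup e_0$ weakly in $H$ as $t\to0^+$, and likewise $\hat h(t)\rightharpoonup h_0$; similarly at $t=T$, with weak limits $\hat e(T):=\varepsilon^{-1}\widetilde{\varepsilon e}(T)$, $\hat h(T):=\mu^{-1}\widetilde{\mu h}(T)\in H$ (invertibility from~(H6)). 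To upgrade weak convergence to strong convergence it is enough to prove $\lim_{t\to0^+}\E^*(t)=\E_0^*:=\tfrac12\big[(\varepsilon e_0,e_0)_H+(\mu h_0,h_0)_H\big]$: then $(\varepsilon\hat e(t),\hat e(t))_H+(\mu\hat h(t),\hat h(t))_H$ converges to $(\varepsilon e_0,e_0)_H+(\mu h_0,h_0)_H$ while each summand is weakly lower semicontinuous and $u\mapsto(\varepsilon u,u)_H^{1/2}$, $u\mapsto(\mu u,u)_H^{1/2}$ are norms equivalent to $\|\cdot\|_H$ by~(H6), which forces $\hat e(t)\to e_0$, $\hat h(t)\to h_0$ in $H$ (and then $\hat e(0)=e_0$, $\hat h(0)=h_0$ follow by comparing the $C([0,T];H)$-representative with $\widetilde{\varepsilon e}$, $\widetilde{\mu h}$). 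This limit I would obtain by feeding the regularised solution into the weak formulation: fix $\theta\in C^1([0,T])$ with $\theta(0)=1$ and $\theta\equiv0$ near $T$, take the one-sided mollifier $\rho_\delta\in C_c^\infty({]0,\delta[})$, and insert $\Phi=\theta e_\delta$ into~(2.9) and $\Psi=\theta h_\delta$ into~(2.10) -- admissible since $e_\delta(\cdot,t)\in V_0$, $h_\delta(\cdot,t)\in V$ up to $t=0$. Adding the two identities, the terms $\int_0^T\theta\,(e,(\varepsilon e_\delta)')_H\dt$ and $\int_0^T\theta\,(h,(\mu h_\delta)')_H\dt$ cancel after the relations of the first step are used, and one is left with
\begin{equation*}
  -\int_0^T\dot\theta\,\big[(\varepsilon e,e_\delta)_H+(\mu h,h_\delta)_H\big]\dt
  +\int_0^T\theta\,\big[(j(e),e_\delta)_H+(e,j(e)_\delta)_H\big]\dt
  =(\varepsilon e_0,e_\delta(\cdot,0))_H+(\mu h_0,h_\delta(\cdot,0))_H .
\end{equation*}
Letting $\delta\to0$ -- using the $L^2$-convergences of $e_\delta,h_\delta,j(e)_\delta$ together with $e_\delta(\cdot,0)\rightharpoonup e_0$, $h_\delta(\cdot,0)\rightharpoonup h_0$ in $H$ (the latter via $\varepsilon e_\delta(\cdot,0)\to\widetilde{\varepsilon e}(0)$ in $V_0^*$ and a Jensen bound) -- then integrating by parts in $t$ and using the energy identity of the previous step, all $\theta$-dependent terms cancel and exactly $\lim_{t\to0^+}\E^*(t)=\E_0^*$ survives. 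The endpoint $t=T$ is treated identically, starting from~(2.15)--(2.16) and the integration-by-parts formula~(2.13).

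I expect the main obstacle to be exactly this last point: a generic $L^2$-weak solution is too rough to serve as a test function in its own weak formulation, so the energy balance -- and hence the strong continuity at the endpoints -- cannot be read off directly; the remedy is to regularise first, which (thanks to the coupled $\curl$-structure) does buy $e_\delta\in V_0$, $h_\delta\in V$, and only then to test. This is where~(H5) (so that the energy identity closes) and~(H6) (to control $e,h$ in $H$ and to invert $\varepsilon,\mu$) are genuinely used; the interior statement is comparatively routine once this device is in place.
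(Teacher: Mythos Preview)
Your proposal is correct, and the core mechanism matches the paper's: regularise in time, observe that the regularised fields gain the spatial regularity $e_\delta(\cdot,t)\in V_0$, $h_\delta(\cdot,t)\in V$ (precisely because the time-derivative terms in~(2.15)--(2.16) become $H$-valued after smoothing), exploit the defining property of $V_0$ to cancel the $\curl$-terms in an energy identity, and run a Cauchy argument.

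Where you diverge from the paper is in the treatment of the endpoints. The paper uses \emph{one-sided} Steklov averages from the outset: the forward average $f_\lambda(t)=\frac{1}{\lambda}\int_t^{t+\lambda}f(s)\,ds$ on $[0,T_1]$ and the backward average $f_{\bar\lambda}$ on $[T_0,T]$, with $0<T_0<T_1<T$. The forward average is perfectly well-behaved at $t=0$, and the Cauchy estimate (their Lemma~3.3) is obtained by integrating the pointwise energy identity over~$s\in[0,T_1]$ rather than anchoring at a single Lebesgue point; this yields uniform convergence on the \emph{closed} interval $[0,T_1]$ in one stroke, so $\hat e,\hat h\in C([0,T_1];H)$ directly. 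Gluing with the backward construction on $[T_0,T]$ gives $C([0,T];H)$. The initial condition then drops out in two lines: since $\varepsilon\hat e\in C([0,T];H)\subset C([0,T];V_0^*)$ must coincide with the absolutely continuous representative $\widetilde{\varepsilon e}$ of Theorem~2.1, one has $(\varepsilon\hat e(0),\varphi)_H=\langle\widetilde{\varepsilon e}(0),\varphi\rangle_{V_0}=(\varepsilon e_0,\varphi)_H$ for all $\varphi\in V_0$.

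Your route---first interior continuity via symmetric mollifiers, then a separate endpoint argument by feeding $\theta e_\delta,\theta h_\delta$ back into~(2.9)--(2.10), extracting the limit energy at $t=0$, and upgrading weak to strong convergence via ``weak limit plus norm convergence''---is sound and is a standard device in parabolic theory, but it is noticeably longer. Had you used one-sided mollifiers throughout (you introduce them only for the endpoint step), your interior Cauchy argument would already extend to $[0,T_1]$ and the elaborate energy-matching at $t=0$ would be unnecessary; the initial condition would then follow exactly as in the paper. The trade-off is that your argument yields the energy equality at the endpoint \emph{as part of} the continuity proof, whereas the paper deduces it afterwards (their Theorem~4.1) from the already-established $C([0,T];H)$-regularity.
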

We will prove this theorem via approximation of $(e,h)$ by time-averages.
This method has been used in~\cite{solonnikov} for the proof
of integral estimates for functions on $Q_T$
(pp.~85--89) and an energy equality for weak solutions
of parabolic initial-boundary value problems (pp.~141--143)
as well as the continuity of these solutions in $t$
with respect to the $L^2(\Omega)$-norm (pp.~158--159).

The method of approximation of weak solutions
of~\eqref{ampere}--\eqref{initial}
by Steklov averages has been developed in~\cite{naumann}.
\hfill $\Box$
\subsection*{Preliminaries}
Let $f \in L^p(Q_T)$ ($1 \le p < \infty$).
We extend $f$ by zero for a.e.~$(x,t) \in \Omega \times (\R \setminus [0,T])$
and denote the function so defined a.e.~on $\Omega \times \R$ by $f$ again.
For $\lambda > 0$, define the \emph{Steklov averages of $f$}
for all $t \in [0,T]$ and a.e.~$x \in \Omega$ by
\begin{equation*}
  f_\lambda(x,t)
  \, = \, \frac{1}{\lambda} \int_t^{t + \lambda} f(x,s) \ds,
  \quad f_{\bar{\lambda}}(x,t)
  \, = \, \frac{1}{\lambda} \int_{t - \lambda}^t f(x,s) \ds,
\end{equation*}
(cf.~\cite[p.~85, p.~141]{solonnikov} ($p = 2$)).
We have
\begin{equation}
  \left\{
    \begin{aligned}
      \, & \text{\emph{for a.e.~$(x,t) \in Q_T$
          there exist the weak derivatives}}
      \\[\smallskipamount]
      & \partial_t f_\lambda(x,t)
      \, = \, \frac{1}{\lambda} \big( f(x,t + \lambda) - f(x,t) \big),
      \\[\smallskipamount]
      & \partial_t f_{\bar{\lambda}}(x,t)
      \, = \, \frac{1}{\lambda} \big( f(x,t) - f(x,t - \lambda) \big);
    \end{aligned}
  \right.
  \label{average-derivatives}
\end{equation}
\begin{equation}
  \left\{
    \begin{aligned}
      \, & \text{\emph{for any $\alpha \in C_0(\R)$}},
      \\
         & \int_{Q_T} f(x,t)
           \Bigg( \frac{1}{\lambda}
           \int_{t - \lambda}^t \alpha(s) \ds \Bigg) \dx \dt
           \, = \, \int_{Q_T}
           f_\lambda(x,t) \, \alpha(t) \dx \dt,
      \\
         & \int_{Q_T} f(x,t)
           \Bigg( \frac{1}{\lambda}
           \int_t^{t + \lambda} \alpha(s) \ds \Bigg) \dx \dt
           \, = \, \int_{Q_T}
           f_{\bar{\lambda}}(x,t) \, \alpha(t) \dx \dt;
    \end{aligned}
  \right.
  \label{average-parts}
\end{equation}
and
\begin{equation}
  f_\lambda \longrightarrow f
  \quad\text{\emph{and}}\quad f_{\bar{\lambda}} \longrightarrow f
  \quad\text{\emph{in $L^p(Q_T)$ as $\lambda \to 0$}}
  \label{average-convergence}
\end{equation}
(see, e.g.~\cite[Appendix~I, Prop.~I.1]{naumann} for the proof
of~\eqref{average-derivatives}--\eqref{average-convergence}
for the Steklov average $f_\lambda$;
the same proofs work for $f_{\bar{\lambda}}$ with obvious changes).
\subsection*{Proof of Theorem~\ref{continuity}}
Let $(e,h) \in L^2(Q_T) \times L^2(Q_T)$
be any solution of~\eqref{ampere-weak}, \eqref{faraday-weak}.
Define $g(x,t) := j(x,t,e(x,t))$ for a.e.~$(x,t) \in Q_T$.
By~\eqref{measurability}, \eqref{growth}, $g \in L^2(Q_T)$.

Fix real numbers $T_0, T_1$ such that
\begin{equation*}
  0 < T_0 < T_1 < T.
\end{equation*}
We divide the proof into three parts.
\subsubsection*{Part I. Integral identities for $(e_\lambda,h_\lambda)$
and $(e_{\bar{\lambda}},h_{\bar{\lambda}})$}
To begin with, we prove integral identities for Steklov averages
similar to~\eqref{ampere-abstract}, \eqref{faraday-abstract}.
\begin{lemma}[Integral identities for $(e_\lambda,h_\lambda)$]
  \label{forward-identities}
  For every $0 < \lambda < T - T_1$,
  \begin{align}
    & \left\{
      \begin{aligned}
        \; & \int_\Omega
        \big( \partial_t (\varepsilon e)_\lambda(x,t) \cdot \varphi(x)
        - h_\lambda(x,t) \cdot \curl \varphi(x)
        + g_\lambda(x,t) \cdot \varphi(x) \big) \dx
        \, = \, 0
        \\
        & \, \text{for a.e.~$t \in [0,T_1]$ and all $\varphi \in X$},
      \end{aligned}
          \right.
          \label{ampere-forward}
    \\[\medskipamount]
    & \left\{
      \begin{aligned}
        \; & \int_\Omega
        \big( \partial_t (\mu h)_\lambda(x,t) \cdot \psi(x)
        + e_\lambda(x,t) \cdot \curl \psi(x) \big) \dx
        \, = \, 0
        \\
        & \, \text{for a.e.~$t \in [0,T_1]$ and all $\psi \in Y$}.
      \end{aligned}
          \right.
          \label{faraday-forward}
  \end{align}
  Moreover, for a.e.~$t \in [0,T_1]$,
  \begin{gather}
    e_\lambda(\cdot,t) \in X,
    \quad h_\lambda(\cdot,t) \in Y,
    \label{regularity-forward}
    \\[\medskipamount]
    \int_\Omega
    (\curl e_\lambda(x,t)) \cdot h_\lambda(x,t) \dx
    \, = \, \int_\Omega
    e_\lambda(x,t) \cdot \curl h_\lambda(x,t) \dx.
    \label{leis-forward}
  \end{gather}
\end{lemma}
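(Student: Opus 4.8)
The plan is to derive the integral identities for the Steklov averages directly from the distributional-derivative identities~(2.15) and~(2.16) established in Theorem~2.1. First I would recall that~(2.15) holds for a.e.\ $t \in [0,T]$ and all $\varphi \in V_0$, and similarly~(2.16) for a.e.\ $t$ and all $\psi \in V$; moreover by Corollary~2.2 the question of whether $(\varepsilon e)'$ or $(\mu h)'$ lands in $L^2(0,T;H)$ is not yet available, so I must work with the $V_0^*$-valued and $V^*$-valued derivatives. The key observation is that applying a Steklov average in the $t$-variable to~(2.15) turns $\langle(\varepsilon e)'(t),\varphi\rangle_{V_0}$ into $\langle((\varepsilon e)')_\lambda(t),\varphi\rangle_{V_0}$, and by~(3.1) (the formula for $\partial_t f_\lambda$) together with the fact that Steklov averaging commutes with the distributional $t$-derivative, one has $((\varepsilon e)')_\lambda = \partial_t((\varepsilon e)_\lambda)$ as an identity in $V_0^*$ for a.e.\ $t \in [0,T_1]$, provided $0 < \lambda < T - T_1$ so that the average only samples values of $t$ inside $[0,T]$. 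The same reasoning applies to~(2.16). This immediately yields~(3.6) and~(3.7), once I note that the averaged right-hand sides $h_\lambda(\cdot,t)$, $g_\lambda(\cdot,t)$, $e_\lambda(\cdot,t)$ are the Steklov averages of the corresponding $L^2(Q_T)$-functions, which is exactly the linearity of averaging.

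Next, to obtain~(3.8), that $(\varepsilon e)_\lambda$ has its $t$-derivative in $H$ and hence $h_\lambda(\cdot,t) \in V$, I would argue as follows. The difference-quotient formula~(3.1) shows that $\partial_t(\varepsilon e)_\lambda(\cdot,t) = \tfrac1\lambda\big((\varepsilon e)(\cdot,t+\lambda) - (\varepsilon e)(\cdot,t)\big)$ for a.e.\ $t$, and the right-hand side is plainly in $H = L^2(\Omega)^3$ for a.e.\ $t$ because $\varepsilon e \in L^2(Q_T)^3$ and, by~(H1), the entries of $\varepsilon$ are bounded. Thus $\partial_t(\varepsilon e)_\lambda(\cdot,t) \in H$ for a.e.\ $t \in [0,T_1]$. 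Feeding this into~(3.6), which now reads as an $H$-identity, gives $\int_\Omega h_\lambda(\cdot,t)\cdot\curl\varphi\dx = \int_\Omega\big(\partial_t(\varepsilon e)_\lambda(\cdot,t) + g_\lambda(\cdot,t)\big)\cdot\varphi\dx$ for all $\varphi \in C_c^\infty(\Omega)^3$, so the distribution $\curl h_\lambda(\cdot,t)$ is represented by the $H$-element $\partial_t(\varepsilon e)_\lambda(\cdot,t) + g_\lambda(\cdot,t)$; by the definition of $V$ this means $h_\lambda(\cdot,t) \in V$. Analogously, $\partial_t(\mu h)_\lambda(\cdot,t) \in H$, and plugging into~(3.7) shows $\int_\Omega e_\lambda(\cdot,t)\cdot\curl\psi\dx = -\int_\Omega\partial_t(\mu h)_\lambda(\cdot,t)\cdot\psi\dx$ for all $\psi \in V$; by the characterization~(2.8) in Remark~2.1 this is precisely the condition $e_\lambda(\cdot,t) \in V_0$.

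Finally~(3.9) is the compatibility identity between $V_0$ and $V$: since $e_\lambda(\cdot,t) \in V_0$, the defining property of $V_0$ says $\int_\Omega(\curl e_\lambda(\cdot,t))\cdot\psi\dx = \int_\Omega e_\lambda(\cdot,t)\cdot\curl\psi\dx$ for all $\psi \in V$, and now we may take $\psi = h_\lambda(\cdot,t)$, which is a legitimate choice because we have just shown $h_\lambda(\cdot,t) \in V$. This is exactly~(3.9). I expect the main obstacle to be a careful bookkeeping point rather than a conceptual one: one must be sure that all the ``a.e.\ $t$'' exceptional sets can be chosen independently of the test function $\varphi$ (resp.\ $\psi$)---this was already arranged in Theorem~2.1, cf.\ the remark after~(2.19)---and that the Steklov average of an $L^2(0,T;V_0^*)$-valued distribution genuinely commutes with the distributional derivative, which requires invoking~(3.1), (3.3) in the Bochner-space setting. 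Once these measure-theoretic niceties are in place, every step is a direct substitution.
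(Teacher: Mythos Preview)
Your argument is correct, but it takes a different path from the paper for the derivation of~(3.6) and~(3.7). The paper does \emph{not} invoke Theorem~2.1 at all. Instead, it goes back to the original weak formulation~(2.9) and inserts the concrete test function $\Phi(x,t) = \varphi(x)\int_{t-\lambda}^{t}\alpha(s)\,ds$ for $\alpha \in C_c(\R)$ with $\supp(\alpha)\subset{]0,T_1[}$; expanding $\partial_t\Phi$ and using the identities~(3.3),~(3.4) for scalar Steklov averages yields~(3.6) directly. The same device with $\Psi(x,t) = \psi(x)\int_{t-\lambda}^{t}\alpha(s)\,ds$ in~(2.10) gives~(3.7). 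Your route---averaging the pointwise-in-$t$ identities~(2.15),~(2.16)---is perfectly valid, but it requires the Bochner-integral fundamental theorem of calculus to identify $((\varepsilon e)')_\lambda(t)$ with $\tfrac{1}{\lambda}\big(\widetilde{\varepsilon e}(t+\lambda)-\widetilde{\varepsilon e}(t)\big)$ and then a further ``a.e.'' identification of the continuous representative $\widetilde{\varepsilon e}$ with $(\varepsilon e)(\cdot,t)$. The paper's approach sidesteps this vector-valued machinery entirely, staying at the level of scalar integrals over $Q_T$; yours is more conceptual in that it reuses the structure already extracted in Theorem~2.1. For~(3.8) and~(3.9) your argument is essentially identical to the paper's.

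One small correction: the formula for $\partial_t f_\lambda$ you cite as ``(3.1)'' is equation~(3.3) in the paper; (3.1) is the statement that $\hat e,\hat h\in C([0,T];H)$.
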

\begin{proof}
  Let $\alpha \in C_0^\infty(\R)$
  be such that $\supp(\alpha) \subset {]0,T_1[}$. Define
  \begin{equation}
    \zeta(t) \, = \, \int_{t - \lambda}^t \alpha(s) \ds,
    \quad t \in \R.
    \label{backward-kernel}    
  \end{equation}
  Then,
  \begin{equation*}
    \zeta \in C_0^\infty(\R),
    \quad \supp(\zeta) \subset {]0,T_1 + \lambda[},
    \quad \dot{\zeta}(t) \, = \, \alpha(t) - \alpha(t - \lambda)
    \quad\text{for all $t \in \R$}.
  \end{equation*}
  Given $\varphi \in X$,
  for the function $\zeta$ being admissible in~\eqref{ampere-weak}, we obtain
  \begin{multline*}
    \int_{Q_T}
    \big( (\varepsilon e)(x,t + \lambda) - (\varepsilon e)(x,t) \big)
    \cdot \varphi(x) \, \alpha(t) \dx \dt
    \\
    = \, \int_{Q_T}
    \big( h(x,t) \cdot \curl \varphi(x)
    - g(x,t) \cdot \varphi(x) \big)
    \Bigg( \; \int_{t - \lambda}^t \alpha(s) \ds \Bigg) \dx \dt.
  \end{multline*}
  We divide each term of this equation by $\lambda$
  and make use of~\eqref{average-derivatives} and~\eqref{average-parts}
  for $f_\lambda$ ($f = \varepsilon e$, resp.~$f = h \cdot \curl \varphi$,
  $f = g \cdot \varphi$) to get
  \begin{equation*}
    \int_{Q_T}
    \big( \partial_t (\varepsilon e)_\lambda (x,t) \cdot \varphi(x)
    - h_\lambda(x,t) \cdot \curl \varphi(x)
    + g_\lambda(x,t) \cdot \varphi(x) \big) \, \alpha(t) \dx \dt
    \, = \, 0.
  \end{equation*}
  The claim~\eqref{ampere-forward} follows from this equation
  by a routine argument.
  We note that the set of measure zero of those $t \in [0,T_1]$
  for which~\eqref{ampere-forward} fails, may depend on $\lambda$
  but is independent of $\varphi \in X$.

  Next, taking $\theta = \zeta$ in~\eqref{faraday-weak},
  where $\zeta$ is defined as in~\eqref{backward-kernel},
  we obtain~\eqref{faraday-forward}
  by analogous arguments as for the proof of~\eqref{ampere-forward}
  (make use of~\eqref{average-derivatives} and~\eqref{average-parts}
  for $f_\lambda$ ($f = \mu h$, resp. $f = e \cdot \curl \psi$)).

  We prove $e_\lambda(\cdot,t) \in X$ for a.e.~$t \in [0,T_1]$
  such that~\eqref{faraday-forward} holds.
  Indeed, for any of these values of $t$, we have
  \begin{equation*}
    \int_\Omega e_\lambda(x,t) \cdot \curl \psi(x) \dx
    \, = \, - \int_\Omega \partial_t (\mu h)_\lambda(x,t) \cdot \psi(x) \dx
    \quad\text{for all $\psi \in Y$}.
  \end{equation*}
  Observing that
  \begin{equation*}
    \partial_t (\mu h)_\lambda(\cdot,t)
    \, = \, \frac{1}{\lambda}
    \big( (\mu h)(\cdot,t + \lambda) - (\mu h)(\cdot,t) \big)
    \in L^2(\Omega)^3,
  \end{equation*}
  it follows $e_\lambda(\cdot,t) \in X$ (see~\eqref{leis}).
  
  To see $h_\lambda(\cdot,t) \in Y$ for a.e.~$t \in [0,T_1]$
  such that~\eqref{ampere-forward} holds,
  it suffices to note that for any of these values of~$t$, we have
  \begin{equation*}
    \int_\Omega h_\lambda(x,t) \cdot \curl \varphi(x) \dx
    \, = \, \int_\Omega
    \big( \partial_t (\varepsilon e)_\lambda(x,t)
    + g_\lambda(x,t) \big) \cdot \varphi(x) \dx
    \quad\text{for all $\varphi \in X$}.
  \end{equation*}  
  Together with
  \begin{equation*}
    \partial_t (\varepsilon e)_\lambda(\cdot,t)
    + g_\lambda(\cdot,t)
    \, = \, \frac{1}{\lambda}
    \big( (\varepsilon e)(\cdot,t + \lambda) - (\varepsilon e)(\cdot,t) \big)
    + g_\lambda(\cdot,t)
    \in L^2(\Omega)^3
  \end{equation*}
  and~\eqref{leis-symmetry} this yields $h_\lambda(\cdot,t) \in Y$.
  Hence, the claim~\eqref{regularity-forward}.

  Finally, \eqref{leis-forward} is a consequence of~\eqref{regularity-forward}
  and assumption~\eqref{complement}.
\end{proof}
\begin{lemma}[Integral identities for $(e_{\bar{\lambda}},h_{\bar{\lambda}})$]
  \label{backward-identities}
  For every $0 < \lambda < T_0$,
  \begin{align}
    & \left\{
      \begin{aligned}
        \; & \int_\Omega
        \big( \partial_t (\varepsilon e)_{\bar{\lambda}}(x,t) \cdot \varphi(x)
        - h_{\bar{\lambda}}(x,t) \cdot \curl \varphi(x)
        + g_{\bar{\lambda}}(x,t) \cdot \varphi(x) \big) \dx
        \, = \, 0
        \\
        & \, \text{for a.e.~$t \in [T_0,T]$ and all $\varphi \in X$},
      \end{aligned}
          \right.
          \label{ampere-backward}
    \\[\medskipamount]
    & \left\{
      \begin{aligned}
        \; & \int_\Omega
        \big( \partial_t (\mu h)_{\bar{\lambda}}(x,t) \cdot \psi(x)
        + e_{\bar{\lambda}}(x,t) \cdot \curl \psi(x) \big) \dx
        \, = \, 0
        \\
        & \, \text{for a.e.~$t \in [T_0,T]$ and all $\psi \in Y$}.
      \end{aligned}
          \right.
          \label{faraday-backward}
  \end{align}
  Moreover, for a.e.~$t \in [T_0,T]$,
  \begin{gather}
    e_{\bar{\lambda}}(\cdot,t) \in X,
    \quad h_{\bar{\lambda}}(\cdot,t) \in Y,
    \label{regularity-backward}
    \\[\medskipamount]
    \int_\Omega
    (\curl e_{\bar{\lambda}}(x,t)) \cdot h_{\bar{\lambda}}(x,t) \dx
    \, = \, \int_\Omega
    \label{leis-backward}
    e_{\bar{\lambda}}(x,t) \cdot \curl h_{\bar{\lambda}}(x,t) \dx.
  \end{gather}
\end{lemma}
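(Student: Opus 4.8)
The plan is to mirror the proof of Lemma~3.2, replacing the backward time integrals in the test functions by forward ones and invoking~(3.3), (3.4) for the backward Steklov average $f_{\bar\lambda}$ in place of $f_\lambda$. First I fix $\alpha \in C_c(\R)$ with $\supp(\alpha) \subset {]T_0,T[}$ and, given $\varphi \in V_0$, consider
\[
  \Phi(x,t) \, = \, \varphi(x) \int_t^{t+\lambda} \alpha(s) \ds
  \quad \text{for a.e.~$(x,t) \in Q_T$}.
\]
Then $\Phi(\cdot,t) \in V_0$ for every $t \in [0,T]$ and $\partial_t\Phi(x,t) = \varphi(x)\big(\alpha(t+\lambda) - \alpha(t)\big) \in L^2(Q_T)^3$. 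Moreover $\Phi(\cdot,T) = 0$ since $\alpha$ vanishes on $[T,T+\lambda]$, and $\Phi(\cdot,0) = 0$ since $0 < \lambda < T_0$ forces $[0,\lambda] \cap \supp(\alpha) = \emptyset$; hence $\Phi$ is an admissible test function in~(2.9) and the right-hand side of~(2.9) vanishes.

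Inserting $\Phi$ into~(2.9), carrying out the substitution $\tau = t + \lambda$ in the integral containing $\alpha(t+\lambda)$, and using the identity $\partial_t(\varepsilon e)_{\bar\lambda}(x,t) = \lambda^{-1}\big((\varepsilon e)(x,t) - (\varepsilon e)(x,t-\lambda)\big)$ from~(3.3), I would divide each term by $\lambda$ and apply the second identity in~(3.4) (for $f_{\bar\lambda}$) to $f = {-}h\cdot\curl\varphi$ and to $f = g\cdot\varphi$; since $\curl\varphi$ and $\varphi$ are independent of $t$, one has $(h\cdot\curl\varphi)_{\bar\lambda} = h_{\bar\lambda}\cdot\curl\varphi$ and $(g\cdot\varphi)_{\bar\lambda} = g_{\bar\lambda}\cdot\varphi$. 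This gives
\[
  \int_{Q_T}
  \big( \partial_t(\varepsilon e)_{\bar\lambda} \cdot \varphi
  - h_{\bar\lambda} \cdot \curl\varphi
  + g_{\bar\lambda} \cdot \varphi \big) \, \alpha(t) \dx \dt
  \, = \, 0
\]
for every such $\alpha$, whence~(3.10) by a routine localization argument, the exceptional $t$-null set depending on $\lambda$ but not on $\varphi$ (separability of $V_0$). Choosing $\Psi(x,t) = \psi(x)\int_t^{t+\lambda}\alpha(s)\ds$ with $\psi \in V$ in~(2.10) produces~(3.11) by the identical computation.

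To obtain~(3.12), note that for a.e.~$t \in [T_0,T]$ one has $t - \lambda \in {]0,T[}$, hence $\partial_t(\mu h)_{\bar\lambda}(\cdot,t) = \lambda^{-1}\big((\mu h)(\cdot,t) - (\mu h)(\cdot,t-\lambda)\big) \in L^2(\Omega)^3$; then~(3.11) together with~(2.8) yields $e_{\bar\lambda}(\cdot,t) \in V_0$. Likewise $\partial_t(\varepsilon e)_{\bar\lambda}(\cdot,t) + g_{\bar\lambda}(\cdot,t) \in L^2(\Omega)^3$, and restricting~(3.10) to test functions $\varphi \in C^\infty_c(\Omega)^3$ exhibits $\partial_t(\varepsilon e)_{\bar\lambda}(\cdot,t) + g_{\bar\lambda}(\cdot,t)$ as a representative of the distribution $\curl h_{\bar\lambda}(\cdot,t)$, so that $h_{\bar\lambda}(\cdot,t) \in V$. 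Finally, applying the defining property of $V_0$ to $u = e_{\bar\lambda}(\cdot,t) \in V_0$ with $\psi = h_{\bar\lambda}(\cdot,t) \in V$ gives~(3.13).

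I do not anticipate a genuine obstacle: this is essentially a verbatim transcription of the proof of Lemma~3.2. The only points requiring care are the bookkeeping of the time shift $\tau = t + \lambda$ --- which is precisely what converts the forward average appearing in the test function into the \emph{backward} difference quotient $\partial_t f_{\bar\lambda}$, with the correct sign in~(3.10)--(3.11) --- and the verification that $\Phi$ and $\Psi$ vanish at $t = 0$ and $t = T$, where the constraint $0 < \lambda < T_0$ is used (in place of the constraint $0 < \lambda < T - T_1$ of Lemma~3.2).
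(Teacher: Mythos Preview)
Your proposal is correct and follows essentially the same route as the paper's own proof: the same test function $\Phi(x,t)=\varphi(x)\int_t^{t+\lambda}\alpha(s)\ds$ with $\supp(\alpha)\subset{]T_0,T[}$, the same use of~(3.3), (3.4) for $f_{\bar\lambda}$, and the same derivation of~(3.12), (3.13) via~(2.8) and the definition of $V_0$. The only slip is that you write ``mirror the proof of Lemma~3.2'' when you clearly mean Lemma~3.1, since the present statement \emph{is} Lemma~3.2.
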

\begin{proof}
  Let $\bar{\alpha} \in C_0^\infty(\R)$
  be such that $\supp(\bar{\alpha}) \subset {]T_0,T[}$.
  This time we define
  \begin{equation*}
    \bar{\zeta}(t) \, = \, \int_t^{t + \lambda} \bar{\alpha}(s) \ds,
    \quad t \in \R.
  \end{equation*}
  Then
  \begin{equation*}
    \bar{\zeta} \in C_0^\infty(\R),
    \quad \supp\big(\bar{\zeta}\big) \subset {]T_0 - \lambda, T[},
    \quad \dot{\bar{\zeta}}(t)
    \, = \, \bar{\alpha}(t + \lambda) - \bar{\alpha}(t)
    \quad\text{for all $t \in \R$}.
  \end{equation*}  
  Given $\varphi \in X$, we take $\bar{\zeta}$ in place
  of $\zeta$ as an admissible function in~\eqref{ampere-weak} and find
  \begin{multline*}
    \int_{Q_T}
    \big( (\varepsilon e)(x,t) - (\varepsilon e)(x,t - \lambda) \big)
    \cdot \varphi(x) \, \bar{\alpha}(t) \dx \dt
    \\
    = \, \int_{Q_T}
    \big( h(x,t) \cdot \curl \varphi(x)
    - g(x,t) \cdot \varphi(x) \big)
    \Bigg( \int_t^{t + \lambda} \bar{\alpha}(s) \ds \Bigg) \dx \dt.
  \end{multline*}
  We divide each term of this equation by $\lambda$ and make use
  of~\eqref{average-derivatives} and~\eqref{average-parts} for
  $f_{\bar{\lambda}}$ ($f = \varepsilon e$, resp. $f = h \cdot \curl \varphi$,
  $f = g \cdot \varphi$) to obtain
  \begin{equation*}
    \int_{Q_T}
    \big( \partial_t (\varepsilon e)_{\bar{\lambda}} (x,t) \cdot \varphi(x)
    - h_{\bar{\lambda}}(x,t) \cdot \curl \varphi(x)
    + g_{\bar{\lambda}}(x,t) \cdot \varphi(x) \big)
    \, \bar{\alpha}(t) \dx \dt
    \, = \, 0.
  \end{equation*}
  The claim~\eqref{ampere-backward} follows from this equation
  by a routine argument.
  We note that the set of measure zero of those $t \in [T_0,T]$
  for which~\eqref{ampere-backward} fails, may depend on $\lambda$
  but is independent of $\varphi \in X$.
  
  Next, to prove~\eqref{faraday-backward}
  we take $\theta = \bar{\zeta}$ in~\eqref{faraday-weak}
  and argue as in the proof of~\eqref{ampere-backward}
  (make use of~\eqref{average-derivatives} and~\eqref{average-parts}
  for $f_{\bar{\lambda}}$ ($f = \mu h$, resp. $f = e \cdot \curl \psi$)).

  We prove $e_{\bar{\lambda}}(\cdot,t) \in X$ for a.e.~$t \in [T_0,T]$
  such that~\eqref{faraday-backward} holds.
  Indeed, for any of these values of $t$, we have
  \begin{equation*}
    \int_\Omega e_{\bar{\lambda}}(x,t) \cdot \curl \psi(x) \dx
    \, = \, - \int_\Omega
    \partial_t (\mu h)_{\bar{\lambda}}(x,t) \cdot \psi(x) \dx
    \quad\text{for all $\psi \in Y$}.
  \end{equation*}
  Observing that
  \begin{equation*}
    \partial_t (\mu h)_{\bar{\lambda}}(\cdot,t)
    \, = \, \frac{1}{\lambda}
    \big( (\mu h)(\cdot,t) - (\mu h)(\cdot,t - \lambda) \big)
    \in L^2(\Omega)^3,
  \end{equation*}
  it follows $e_{\bar{\lambda}}(\cdot,t) \in X$ (see~\eqref{leis}).
  
  To see $h_{\bar{\lambda}}(\cdot,t) \in Y$ for a.e.~$t \in [T_0,T]$
  such that~\eqref{ampere-backward} holds,
  it suffices to note that for any of these values of~$t$, we have
  \begin{equation*}
    \int_\Omega h_{\bar{\lambda}}(x,t) \cdot \curl \varphi(x) \dx
    \, = \, \int_\Omega
    \big( \partial_t (\varepsilon e)_{\bar{\lambda}}(x,t)
    + g_{\bar{\lambda}}(x,t) \big) \cdot \varphi(x) \dx
    \quad\text{for all $\varphi \in X$}.
  \end{equation*}  
  Together with
  \begin{equation*}
    \partial_t (\varepsilon e)_{\bar{\lambda}}(\cdot,t)
    + g_{\bar{\lambda}}(\cdot,t)
    \, = \, \frac{1}{\lambda}
    \big( (\varepsilon e)(\cdot,t) - (\varepsilon e)(\cdot,t - \lambda) \big)
    + g_{\bar{\lambda}}(\cdot,t)
    \in L^2(\Omega)^3
  \end{equation*}
  and~\eqref{leis-symmetry} this yields
  $h_{\bar{\lambda}}(\cdot,t) \in Y$.
  Whence, the claim~\eqref{regularity-backward}.

  Finally, \eqref{leis-backward} is a consequence
  of~\eqref{regularity-backward} and assumption~\eqref{complement}.  
\end{proof}
\subsubsection*{Part II. Estimates for the differences
  of $(e_\lambda,h_\lambda)$ and of $(e_{\bar{\lambda}},h_{\bar{\lambda}})$}
For the sake of notational simplicity,
throughout the following discussion we use the notation
\begin{equation*}
  (u,v)_{H_\varepsilon}
  \, := \, \int_\Omega \varepsilon(x) \, u(x) \cdot v(x) \dx, \quad
  (u,v)_{H_\mu}
  \, := \, \int_\Omega \mu(x) \, u(x) \cdot v(x) \dx, \quad
  u, v \in H.
\end{equation*}
Both scalar products are equivalent to the standard scalar product on $H$.

Let $(\lambda_m)_{m \in \N}$ be any sequence of real numbers such that
$0 < \lambda_m < \min\{T_0,T - T_1\}$ for all $m \in \N$,
and $\lambda_m \to 0$ as $m \to \infty$.
Following ideas from~\cite[pp.~158--159]{solonnikov},
we establish estimates for the differences
$e_{\lambda_m} - e_{\lambda_n}$, $h_{\lambda_m} - h_{\lambda_n}$ and
$e_{\bar{\lambda}_m} - e_{\bar{\lambda}_n}$,
$h_{\bar{\lambda}_m} - h_{\bar{\lambda}_n}$
which enable us to prove that
$(e_{\lambda_m}){}_{m \in \N}$, $(h_{\lambda_m}){}_{m \in \N}$
and $(e_{\bar{\lambda}_m}){}_{m \in \N}$, $(h_{\bar{\lambda}_m}){}_{m \in \N}$
are Cauchy sequences in $C([0,T_1];H)$ and $C([T_0,T];H)$, respectively.
Here, crucial points are the identities~\eqref{leis-forward}
and~\eqref{leis-backward} that we may use as well for the differences above.
Moreover, applying the distributional derivatives of $e_\lambda$, $h_\lambda$
makes our presentation simpler than the one in~\cite{solonnikov}.
\hfill $\Box$

We consider~\eqref{ampere-forward} and~\eqref{faraday-forward}
with $\lambda = \lambda_m$ and $\lambda = \lambda_n$,
form differences $e_{\lambda_m} - e_{\lambda_n}$
and $h_{\lambda_m} - h_{\lambda_n}$, take then
$\varphi = e_{\lambda_m}(\cdot,t) - e_{\lambda_n}(\cdot,t)$
in~\eqref{ampere-forward}
and $\psi = h_{\lambda_m}(\cdot,t) - h_{\lambda_n}(\cdot,t)$
in~\eqref{faraday-forward},
add the identities so obtained (cf.~\cite[p.~159]{solonnikov})
and observe~\eqref{leis-forward} with $e_{\lambda_m} - e_{\lambda_n}$,
$h_{\lambda_m} - h_{\lambda_n}$ in place of $e_\lambda$, $h_\lambda$.
This gives
\begin{equation}
  \left\{
    \begin{aligned}
      \; & \frac{\mathrm{d}}{\mathrm{d} \tau}
      \Big( \| e_{\lambda_m}(\tau) - e_{\lambda_n}(\tau) \|^2_{H_\varepsilon}
      + \| h_{\lambda_m}(\tau) - h_{\lambda_n}(\tau) \|^2_{H_\mu} \Big)
      \\[\smallskipamount]
      & = \, -2 \, \big( g_{\lambda_m}(\tau) - g_{\lambda_n}(\tau),
      e_{\lambda_m}(\tau) - e_{\lambda_n}(\tau) \big){}_H
      \\[\medskipamount]
      & \text{for all $m, n \in \N$ and a.e.~$\tau \in [0,T_1]$}.
    \end{aligned}
  \right.
  \label{rate-forward}
\end{equation}
\begin{lemma}
  \label{forward-balance}
  For all $m$, $n \in \N$ and all $t \in [0,T_1]$,
  \begin{equation}
    \left\{
      \begin{aligned}
        \; &
        T_1 \Big( \| e_{\lambda_m}(t) - e_{\lambda_n}(t) \|^2_{H_\varepsilon}
        + \| h_{\lambda_m}(t) - h_{\lambda_n}(t) \|^2_{H_\mu} \Big)
        \\[\bigskipamount]
        & = \, \int_0^{T_1}
        \Big( \| e_{\lambda_m}(s) - e_{\lambda_n}(s) \|^2_{H_\varepsilon}
        + \| h_{\lambda_m}(s) - h_{\lambda_n}(s) \|^2_{H_\mu} \Big) \ds
        \\
        & \quad - 2 \int_0^{T_1}
        \Bigg( \int_s^t 
        \big( g_{\lambda_m}(\tau) - g_{\lambda_n}(\tau),
        e_{\lambda_m}(\tau) - e_{\lambda_n}(\tau) \big){}_H \dtau
        \Bigg) \ds.
        \; \footnotemark
      \end{aligned}
    \right.
    \label{balance-forward}
  \end{equation}
  \footnotetext{For $s$, $t \in [0,T_1]$, $s > t$, define
    $\int_s^t \beta(\tau) \dtau = - \int_t^s \beta(\tau) \dtau$.}
\end{lemma}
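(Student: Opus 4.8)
The plan is to integrate the pointwise identity~(3.15) against the lower limit of the time integration and then average over~$[0,T_1]$.

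First I would observe that, for each fixed $m,n \in \N$, the scalar function
\[
  \phi_{mn}(\tau) \, := \, \| e_{\lambda_m}(\tau) - e_{\lambda_n}(\tau) \|^2_{H_\varepsilon}
  + \| h_{\lambda_m}(\tau) - h_{\lambda_n}(\tau) \|^2_{H_\mu},
  \quad \tau \in [0,T_1],
\]
is absolutely continuous on $[0,T_1]$. Indeed, by~(3.3) the Steklov averages $e_{\lambda_k}$ and $h_{\lambda_k}$ possess weak $t$-derivatives belonging to $L^2(0,T_1;H)$ for every $0 < \lambda_k < T - T_1$, so that $e_{\lambda_m} - e_{\lambda_n}$ and $h_{\lambda_m} - h_{\lambda_n}$, viewed as maps $[0,T_1] \longrightarrow H$, lie in $H^1(0,T_1;H)$; since the weighted scalar products $(\cdot\,,\cdot)_{H_\varepsilon}$ and $(\cdot\,,\cdot)_{H_\mu}$ are bounded bilinear forms on $H$ (cf.~(H1), (H5)), the function $\phi_{mn}$ is then in $W^{1,1}(0,T_1)$, hence absolutely continuous. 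This is the only point that requires genuine care: identity~(3.15) by itself only yields a.e.\ differentiability of $\phi_{mn}$, which would not suffice to recover $\phi_{mn}$ from its derivative; it is the $H^1$-in-time regularity of the Steklov averages that makes the fundamental theorem of calculus applicable.

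Next I would combine~(3.15) with the absolute continuity of $\phi_{mn}$ to write, for all $s,t \in [0,T_1]$,
\[
  \phi_{mn}(t) \, = \, \phi_{mn}(s)
  - 2 \int_s^t \big( g_{\lambda_m}(\tau) - g_{\lambda_n}(\tau),
  e_{\lambda_m}(\tau) - e_{\lambda_n}(\tau) \big)_H \dtau,
\]
where for $s > t$ the integral is read with the sign convention of the footnote to~(3.16); this is just the fundamental theorem of calculus for absolutely continuous functions, valid irrespective of the order of $s$ and $t$. Finally, fixing $t \in [0,T_1]$, I would integrate this identity with respect to $s$ over $[0,T_1]$: the map $s \longmapsto \int_s^t(\,\cdot\,)\dtau$ is continuous, hence integrable on $[0,T_1]$, so the integration is legitimate, and since $\phi_{mn}(t)$ does not depend on $s$ the left-hand side becomes $T_1\,\phi_{mn}(t)$, while the two terms on the right reproduce exactly the two integrals appearing in~(3.16). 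This gives~(3.16) for all $m,n \in \N$ and all $t \in [0,T_1]$. Beyond the absolute-continuity issue flagged above there is no real obstacle; the argument is the familiar device of averaging an integrated differential identity over its lower integration limit.
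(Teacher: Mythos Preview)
Your argument is correct and is essentially the paper's own proof: integrate the differential identity~(3.14) from $s$ to $t$, then average over $s \in [0,T_1]$. The paper does this by splitting into the cases $s \in [0,t]$ and $s \in [t,T_1]$ and adding the two resulting equalities, whereas you handle both at once via the sign convention in the footnote; your explicit verification that $\phi_{mn}$ is absolutely continuous (so that the fundamental theorem of calculus applies) is a point the paper leaves implicit. One numbering slip: the pointwise identity you cite is~(3.14), and the target equality is~(3.15).
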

\begin{proof}
  Firstly, let $t \in {]0,T_1]}$. Given any $s \in [0,t]$,
  we integrate~\eqref{rate-forward} over the interval $[s,t]$ to obtain
  \begin{align*}
    \| e_{\lambda_m}(t) - e_{\lambda_n}(t) \|^2_{H_\varepsilon}
    + \| h_{\lambda_m}(t) - h_{\lambda_n}(t) \|^2_{H_\mu}
    \, & = \, \| e_{\lambda_m}(s) - e_{\lambda_n}(s) \|^2_{H_\varepsilon}
         + \| h_{\lambda_m}(s) - h_{\lambda_n}(s) \|^2_{H_\mu}
    \\
       & - 2 \int_s^t
         \big( g_{\lambda_m}(\tau) - g_{\lambda_n}(\tau),
         e_{\lambda_m}(\tau) - e_{\lambda_n}(\tau) \big){}_H \dtau.
  \end{align*}
  We now integrate this equation
  with respect to the variable $s$ over the interval $[0,t]$. It follows
  \begin{equation}
    \left\{
      \begin{aligned}
        \; &
        t \, \Big( \| e_{\lambda_m}(t) - e_{\lambda_n}(t) \|^2_{H_\varepsilon}
        + \| h_{\lambda_m}(t) - h_{\lambda_n}(t) \|^2_{H_\mu} \Big)
        \\[\bigskipamount]
        & = \, \int_0^t
        \Big( \| e_{\lambda_m}(s) - e_{\lambda_n}(s) \|^2_{H_\varepsilon}
        + \| h_{\lambda_m}(s) - h_{\lambda_n}(s) \|^2_{H_\mu} \Big) \ds
        \\
        & \quad - 2 \int_0^t
        \Bigg( \int_s^t 
        \big( g_{\lambda_m}(\tau) - g_{\lambda_n}(\tau),
        e_{\lambda_m}(\tau) - e_{\lambda_n}(\tau) \big){}_H \dtau
        \Bigg) \ds.
      \end{aligned}
    \right.
    \label{first-forward}
  \end{equation}

  Secondly, let $t \in {[0,T_1[}$. Given any $s \in [t,T_1]$,
  we integrate~\eqref{rate-forward} over the interval $[t,s]$ to get
  \begin{align*}
    \| e_{\lambda_m}(t) - e_{\lambda_n}(t) \|^2_{H_\varepsilon}
    + \| h_{\lambda_m}(t) - h_{\lambda_n}(t) \|^2_{H_\mu}
    \, & = \, \| e_{\lambda_m}(s) - e_{\lambda_n}(s) \|^2_{H_\varepsilon}
         + \| h_{\lambda_m}(s) - h_{\lambda_n}(s) \|^2_{H_\mu}
    \\
       & + 2 \int_t^s
         \big( g_{\lambda_m}(\tau) - g_{\lambda_n}(\tau),
         e_{\lambda_m}(\tau) - e_{\lambda_n}(\tau) \big){}_H \dtau.
  \end{align*}
  We integrate this equation
  with respect to the variable $s$ over the interval $[t,T_1]$. This yields
  \begin{equation}
    \left\{
      \begin{aligned}
        \; & (T_1 - t)
        \Big( \| e_{\lambda_m}(t) - e_{\lambda_n}(t) \|^2_{H_\varepsilon}
        + \| h_{\lambda_m}(t) - h_{\lambda_n}(t) \|^2_{H_\mu} \Big)
        \\[\bigskipamount]
        & = \, \int_t^{T_1}
        \Big( \| e_{\lambda_m}(s) - e_{\lambda_n}(s) \|^2_{H_\varepsilon}
        + \| h_{\lambda_m}(s) - h_{\lambda_n}(s) \|^2_{H_\mu} \Big) \ds
        \\
        & \quad + 2 \int_t^{T_1}
        \Bigg( \int_t^s 
        \big( g_{\lambda_m}(\tau) - g_{\lambda_n}(\tau),
        e_{\lambda_m}(\tau) - e_{\lambda_n}(\tau) \big){}_H \dtau
        \Bigg) \ds.
      \end{aligned}
    \right.
    \label{second-forward}
  \end{equation}
  Finally, if $t = 0$ or $t = T_1$,
  then~\eqref{first-forward} resp.~\eqref{second-forward} are trivial.
  Adding~\eqref{first-forward} and~\eqref{second-forward}
  we obtain~\eqref{balance-forward} for all $t \in [0,T_1]$.
\end{proof}

We finish Part~II with an analogue of Lemma~\ref{forward-balance}.
For this we consider integral identities~\eqref{ampere-backward}
and~\eqref{faraday-backward},
and repeat the arguments which led to~\eqref{rate-forward}.
Using~\eqref{leis-backward} with $e_{\bar{\lambda}_m} - e_{\bar{\lambda}_n}$,
$h_{\bar{\lambda}_m} - h_{\bar{\lambda}_n}$
instead of $e_{\bar{\lambda}}$, $h_{\bar{\lambda}}$, one obtains
\begin{equation}
  \left\{
    \begin{aligned}
      \; & \frac{\mathrm{d}}{\mathrm{d} \tau}
      \Big( \big\| e_{\bar{\lambda}_m}(\tau)
      - e_{\bar{\lambda}_n}(\tau) \big\|{}^2_{H_\varepsilon}
      + \big\| h_{\bar{\lambda}_m}(\tau)
      - h_{\bar{\lambda}_n}(\tau) \big\|{}^2_{H_\mu} \Big)
      \\[\smallskipamount]
      & = \, -2 \, \big( g_{\bar{\lambda}_m}(\tau)
      - g_{\bar{\lambda}_n}(\tau),
      e_{\bar{\lambda}_m}(\tau)
      - e_{\bar{\lambda}_n}(\tau) \big){}_H
      \\[\medskipamount]
      & \text{for all $m, n \in \N$ and a.e.~$\tau \in [T_0,T]$}.
    \end{aligned}
  \right.
  \label{rate-backward}
\end{equation}
\begin{lemma}
  \label{backward-balance}
  For all $m$, $n \in \N$ and all $t \in [T_0,T]$,
  \begin{equation}
    \left\{
      \begin{aligned}
        \; &
        (T - T_0) \Big( \big\| e_{\bar{\lambda}_m}(t)
        - e_{\bar{\lambda}_n}(t) \big\|{}^2_{H_\varepsilon}
        + \big\| h_{\bar{\lambda}_m}(t)
        - h_{\bar{\lambda}_n}(t) \big\|{}^2_{H_\mu} \Big)
        \\[\bigskipamount]
        & = \, \int_{T_0}^T
        \Big( \big\| e_{\bar{\lambda}_m}(s)
        - e_{\bar{\lambda}_n}(s) \big\|{}^2_{H_\varepsilon}
        + \big\| h_{\bar{\lambda}_m}(s)
        - h_{\bar{\lambda}_n}(s) \big\|{}^2_{H_\mu} \Big) \ds
        \\
        & \quad - 2 \int_{T_0}^T
        \Bigg( \int_s^t 
        \big( g_{\bar{\lambda}_m}(\tau)
        - g_{\bar{\lambda}_n}(\tau),
        e_{\bar{\lambda}_m}(\tau)
        - e_{\bar{\lambda}_n}(\tau) \big){}_H \dtau
        \Bigg) \ds.
      \end{aligned}
    \right.
    \label{balance-backward}
  \end{equation}
\end{lemma}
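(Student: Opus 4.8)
The plan is to transcribe the proof of Lemma~3.3 almost verbatim, replacing $[0,T_1]$ by $[T_0,T]$, the forward Steklov averages by the backward ones, and with the differenced identity~(3.18) playing the role that~(3.14) plays there. Throughout I would abbreviate
\[
  F(\tau) \, := \, \| e_{\bar{\lambda}_m}(\tau) - e_{\bar{\lambda}_n}(\tau) \|^2_{H_\varepsilon} + \| h_{\bar{\lambda}_m}(\tau) - h_{\bar{\lambda}_n}(\tau) \|^2_{H_\mu} ,
\]
\[
  G(\tau) \, := \, \big( g_{\bar{\lambda}_m}(\tau) - g_{\bar{\lambda}_n}(\tau), \, e_{\bar{\lambda}_m}(\tau) - e_{\bar{\lambda}_n}(\tau) \big){}_H ,
\]
so that~(3.18) reads $\frac{\mathrm{d}}{\mathrm{d}\tau} F(\tau) = -2\, G(\tau)$ for a.e.\ $\tau \in [T_0,T]$. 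First I would note that, since the backward Steklov averages $e_{\bar{\lambda}}$, $h_{\bar{\lambda}}$ and $g_{\bar{\lambda}}$ have weak $t$-derivatives in $L^2$ over the range in question (cf.~(3.3)), the scalar function $F$ is absolutely continuous on $[T_0,T]$; hence~(3.18), though valid only a.e., may be integrated over any subinterval.

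Fixing $t \in {]T_0,T[}$, I would then argue in two symmetric steps. For $s \in [T_0,t]$, integrating~(3.18) over $[s,t]$ gives $F(t) = F(s) - 2 \int_s^t G(\tau) \dtau$, and integrating this in $s$ over $[T_0,t]$ yields
\[
  (t - T_0)\, F(t) \, = \, \int_{T_0}^t F(s) \ds \, - \, 2 \int_{T_0}^t \Bigg( \int_s^t G(\tau) \dtau \Bigg) \ds .
\]
Symmetrically, for $s \in [t,T]$, integrating~(3.18) over $[t,s]$ gives $F(t) = F(s) + 2 \int_t^s G(\tau) \dtau$, and integrating this in $s$ over $[t,T]$ yields
\[
  (T - t)\, F(t) \, = \, \int_t^T F(s) \ds \, + \, 2 \int_t^T \Bigg( \int_t^s G(\tau) \dtau \Bigg) \ds .
\]
Adding the two displays, combining $\int_{T_0}^t F(s)\ds + \int_t^T F(s)\ds = \int_{T_0}^T F(s)\ds$ and $(t-T_0)F(t) + (T-t)F(t) = (T-T_0)F(t)$, and fusing the two iterated integrals into $-2\int_{T_0}^T \big( \int_s^t G(\tau)\dtau \big)\ds$ by means of the convention $\int_s^t G(\tau)\dtau = -\int_t^s G(\tau)\dtau$ for $s > t$ (the footnote to~(3.15)), I arrive at~(3.19). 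The endpoint cases $t = T_0$ and $t = T$ are immediate, as one of the two displays is then vacuous and~(3.19) reduces to the other.

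I do not expect a genuine obstacle here: the computation is a line-for-line repetition of the proof of Lemma~3.3. The two points that want a little care are the justification that $F$ is absolutely continuous --- so that the a.e.\ identity~(3.18) may legitimately be integrated --- and the sign bookkeeping when the double integrals over $[T_0,t]$ and $[t,T]$ are merged into a single integral over $[T_0,T]$ via the signed-integral convention.
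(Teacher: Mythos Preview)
Your proposal is correct and follows the paper's own proof essentially line for line: fix $t\in{]T_0,T[}$, integrate~(3.18) over $[s,t]$ and then in $s$ over $[T_0,t]$ to get~(3.20), do the symmetric computation over $[t,T]$ to get~(3.21), and add. The only difference is that you make explicit the absolute continuity of $F$ needed to integrate~(3.18), which the paper leaves implicit.
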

\begin{proof}
  Firstly, let $t \in {]T_0,T]}$. Given any $s \in [T_0,t]$,
  we integrate~\eqref{rate-backward} over the interval $[s,t]$
  and integrate then the equation so obtained 
  with respect to the variable $s$ over the interval $[T_0,t]$. This gives
  \begin{equation}
    \left\{
      \begin{aligned}
        \; &
        (t - T_0) \Big( \big\| e_{\bar{\lambda}_m}(t)
        - e_{\bar{\lambda}_n}(t) \big\|{}^2_{H_\varepsilon}
        + \big\| h_{\bar{\lambda}_m}(t)
        - h_{\bar{\lambda}_n}(t) \big\|{}^2_{H_\mu} \Big)
        \\[\bigskipamount]
        & = \, \int_{T_0}^t
        \Big( \big\| e_{\bar{\lambda}_m}(s)
        - e_{\bar{\lambda}_n}(s) \big\|{}^2_{H_\varepsilon}
        + \big\| h_{\bar{\lambda}_m}(s)
        - h_{\bar{\lambda}_n}(s) \big\|{}^2_{H_\mu} \Big) \ds
        \\
        & \quad - 2 \int_{T_0}^t
        \Bigg( \int_s^t 
        \big( g_{\bar{\lambda}_m}(\tau)
         - g_{\bar{\lambda}_n}(\tau),
         e_{\bar{\lambda}_m}(\tau)
         - e_{\bar{\lambda}_n}(\tau) \big){}_H \dtau
         \Bigg) \ds.
      \end{aligned}
    \right.
    \label{first-backward}
  \end{equation}
  
  Secondly, let $t \in {[T_0,T[}$. Given any $s \in [t,T]$,
  we integrate~\eqref{rate-backward} over the interval $[t,s]$
  and integrate then the equation obtained in this way
  with respect to the variable $s$ over the interval $[t,T]$ to find
  \begin{equation}
    \left\{
      \begin{aligned}
        \; & (T - t)
        \Big( \big\| e_{\bar{\lambda}_m}(t)
        - e_{\bar{\lambda}_n}(t) \big\|{}^2_{H_\varepsilon}
        + \big\| h_{\bar{\lambda}_m}(t)
        - h_{\bar{\lambda}_n}(t) \big\|{}^2_{H_\mu} \Big)
        \\[\bigskipamount]
        & = \, \int_t^T
        \Big( \big\| e_{\bar{\lambda}_m}(s)
        - e_{\bar{\lambda}_n}(s) \big\|{}^2_{H_\varepsilon}
        + \big\| h_{\bar{\lambda}_m}(s)
        - h_{\bar{\lambda}_n}(s) \big\|{}^2_{H_\mu} \Big) \ds
        \\
        & \quad + 2 \int_t^T
        \Bigg( \int_t^s 
        \big( g_{\bar{\lambda}_m}(\tau)
         - g_{\bar{\lambda}_n}(\tau),
         e_{\bar{\lambda}_m}(\tau)
         - e_{\bar{\lambda}_n}(\tau) \big){}_H \dtau        
        \Bigg) \ds.
      \end{aligned}
    \right.
    \label{second-backward}
  \end{equation}
  Finally, if $t = T_0$ or $t = T$,
  then~\eqref{first-backward} resp.~\eqref{second-backward} are trivial.
  Adding~\eqref{first-backward} and~\eqref{second-backward}
  we obtain~\eqref{balance-backward} for all $t \in [T_0,T]$.
\end{proof}
\subsubsection*{Part III. Proof of Theorem~\ref{continuity} completed}
Let $(\lambda_m)_{m \in \N}$ be any sequence of real numbers
as at the beginning of Part~II. 
From~\eqref{balance-forward} we infer
\begin{align*}
  & \max_{t \in [0,T_1]}
    \Big( \|e_{\lambda_m}(t) - e_{\lambda_n}(t)\|^2_{H_\varepsilon}
    + \|h_{\lambda_m}(t) - h_{\lambda_n}(t)\|^2_{H_\mu} \Big)
  \\[\medskipamount]
  & \le \frac{1}{T_1} \int_0^{T_1}
    \Big( \| e_{\lambda_m}(s) - e_{\lambda_n}(s) \|^2_{H_\varepsilon}
    + \| h_{\lambda_m}(s) - h_{\lambda_n}(s) \|^2_{H_\mu} \Big) \ds
  \\
  & \quad + 2 \int_0^{T_1}
    \| g_{\lambda_m}(\tau) - g_{\lambda_n}(\tau) \|_H \,
    \| e_{\lambda_m}(\tau) - e_{\lambda_n}(\tau)\|_H \dtau
\end{align*}
for all $m$, $n \in \N$.
Observing~\eqref{positivity} and~\eqref{average-convergence} we see that
$(e_{\lambda_m})_{m \in \N}$, $(h_{\lambda_m})_{m \in \N}$
are Cauchy sequences in $C([0,T_1];H)$.
Analogously, \eqref{balance-backward} implies that
$(e_{\bar{\lambda}_m})_{m \in \N}$, $(h_{\bar{\lambda}_m})_{m \in \N}$
are Cauchy sequences in $C([T_0,T];H)$.
Thus, there exist
\begin{equation*}
  \underline{e}, \underline{h} \in C([0,T_1];H)
  \quad\text{and}\quad
  \overline{e}, \overline{h} \in C([T_0,T];H)
\end{equation*}
such that
\begin{align}
  e_{\lambda_m} \longrightarrow \, \underline{e}
  \quad\text{and}\quad h_{\lambda_m} \longrightarrow \, \underline{h}
  & \quad\text{in $C([0,T_1];H)$},
    \label{convergence-forward}
  \\
  e_{\bar{\lambda}_m} \longrightarrow \, \overline{e}
  \quad\text{and}\quad h_{\bar{\lambda}_m} \longrightarrow \, \overline{h}
  & \quad\text{in $C([T_0,T];H)$}
    \label{convergence-backward}
\end{align}
as $m \to \infty$. A routine argument gives
\begin{alignat*}{3}
  \underline{e}(t)
  & \, = \, e(t),
  & \quad \underline{h}(t)
  & \, = \, h(t)
  & \quad & \text{in $H$ for a.e.~$t \in [0,T_1]$},
  \\
  \overline{e}(t)
  & \, = \, e(t),
  & \quad \overline{h}(t)
  & \, = \, h(t)
  & \quad & \text{in $H$ for a.e.~$t \in [T_0,T]$}.
\end{alignat*}

Put $T_* = \frac{1}{2} (T_0 + T_1)$ and define
\begin{equation*}
  \hat{e}(t)
  \, := \,
  \begin{cases}
    \, \underline{e}(t) & \text{if $t \in [0,T_*]$}, \\
    \, \overline{e}(t) & \text{if $t \in [T_*,T]$};
  \end{cases}
  \quad
  \hat{h}(t)
  \, := \,
  \begin{cases}
    \, \underline{h}(t) & \text{if $t \in [0,T_*]$}, \\
    \, \overline{h}(t) & \text{if $t \in [T_*,T]$}.
  \end{cases}
\end{equation*}
We obtain
\begin{equation}
  \hat{e}, \hat{h} \in C([0,T];H),
  \quad \hat{e}(t) = e(t),
  \quad \hat{h}(t) = h(t)
  \quad \text{in $H$ for a.e.~$t \in [0,T]$},
  \label{continuous-representatives}
\end{equation}
i.e.~\eqref{representatives} holds.

It remains to prove $\hat{e}(0) = e_0$ in $H$ (cf.~\eqref{initial-strong}).
The proof of $\hat{h}(0) = h_0$ in $H$
follows the same lines with minor modifications.
Identifying $\varepsilon \hat{e} \in C([0,T];H)$ with an element
in $C([0,T];X^*)$ it follows
\begin{equation*}
  (\varepsilon \hat{e})(t)
  \, = \, (\varepsilon e)\,\tilde{}\,(t)
  \quad\text{in $X^*$ for all $t \in [0,T]$},
\end{equation*}
where $(\varepsilon e)\,\tilde{}:[0,T] \longrightarrow X^*$
denotes the absolutely continuous representative
in the equivalence class $\varepsilon e \in L^2(0,T;X^*)$
(cf.~Theorem~\ref{abstract-formulation}).
Thus, for all $\varphi \in X$,
\begin{equation*}
  (\varepsilon \hat{e}(0), \varphi)_H
  \, = \, \langle (\varepsilon \hat{e})(0), \varphi \rangle_X
  \, = \, \langle (\varepsilon e)\,\tilde{}\,(0), \varphi \rangle_X
  \, = \, \langle \varepsilon e_0, \varphi \rangle_X
  \, = \, (\varepsilon e_0, \varphi)_H.
\end{equation*}
The proof of Theorem~\ref{continuity} is complete.
\section{Energy equality. Well-posedness}
\label{energy}
\noindent
In this section, we prove that under
the hypotheses~\eqref{complement}--\eqref{positivity}
\emph{any} solution of~\eqref{ampere-weak}, \eqref{faraday-weak}
obeys an energy equality.
If, in addition, $\xi \longmapsto j(\cdot,\cdot,\xi)$ is monotone,
then the well-posedness of~\eqref{ampere-weak}, \eqref{faraday-weak}
in the framework of~$L^2$ is easily derived from the energy equality
(see~Theorem~\ref{well-posedness}).

Besides its independent interest,
this equality is fundamental to our proof of the existence
of a solution of~\eqref{ampere-weak}, \eqref{faraday-weak}
via the Faedo-Galerkin method (see Section~\ref{faedo-galerkin}).

The following theorem is the main result of our paper.
\begin{theorem}[Energy equality]
  \label{energy-balance}
  Assume~\emph{\eqref{complement}--\eqref{positivity}}.
  Let $(e,h) \in L^2(Q_T)^3 \times L^2(Q_T)^3$ be any solution
  of~\emph{\eqref{ampere-weak}, \eqref{faraday-weak}} and denote by
  \begin{equation*}
    \hat{e}, \hat{h} \in C([0,T];H)
  \end{equation*}
  the continuous representatives in the equivalence classes $e, h$
  \emph{(}cf.~Theorem~\ref{continuity}\,\emph{)}.
  Then,
  \begin{equation}
    \frac{1}{2}
    \Big( \big\| \hat{e}(t) \big\|{}^2_{H_\varepsilon}
    + \big\| \hat{h}(t) \big\|{}^2_{H_\mu} \Big)
    + \int_0^t (j(e),e)_H \ds
    \, = \, \frac{1}{2}
    \Big( \| e_0 \|^2_{H_\varepsilon} + \| h_0 \|^2_{H_\mu} \Big)
    \quad\text{for all $t \in [0,T]$}.
    \label{energy-equality}
  \end{equation}
\end{theorem}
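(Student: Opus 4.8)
The plan is to extract the energy equality from the Steklov-averaged integral identities of Lemma~3.2 by testing them with the averaged fields themselves, and then to pass to the limit using the uniform convergence established in the proof of Theorem~3.1.

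First I would write $g = j(e) \in L^2(Q_T)$ as in Section~3, fix $T_1 \in {]0,T[}$ and a sequence $(\lambda_m)_{m \in \N}$ with $0 < \lambda_m < T - T_1$ and $\lambda_m \to 0$. For a.e.\ $t \in [0,T_1]$ one has $e_{\lambda_m}(\cdot,t) \in V_0$ and $h_{\lambda_m}(\cdot,t) \in V$ by~(3.8), so $\varphi = e_{\lambda_m}(\cdot,t)$ is admissible in~(3.6) and $\psi = h_{\lambda_m}(\cdot,t)$ in~(3.7). Since $\varepsilon(\cdot)$, $\mu(\cdot)$ are independent of $t$, $(\varepsilon e)_{\lambda_m} = \varepsilon\, e_{\lambda_m}$ and $(\mu h)_{\lambda_m} = \mu\, h_{\lambda_m}$, so hypothesis~(H5) gives
\begin{equation*}
  \int_\Omega \partial_t (\varepsilon e)_{\lambda_m}(\cdot,t) \cdot e_{\lambda_m}(\cdot,t) \dx
  \, = \, \frac{1}{2} \, \frac{\mathrm{d}}{\mathrm{d}t} \, \big\| e_{\lambda_m}(t) \big\|^2_{H_\varepsilon},
\end{equation*}
and analogously for the $\mu h$-term, while the two $\curl$-integrals cancel by~(3.9) (which applies precisely because $e_{\lambda_m}(\cdot,t) \in V_0$). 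Carrying out in this way the computation that led to~(3.14), but for $e_{\lambda_m}, h_{\lambda_m}$ themselves instead of their differences, one obtains
\begin{equation*}
  \frac{1}{2} \, \frac{\mathrm{d}}{\mathrm{d}t}
  \Big( \big\| e_{\lambda_m}(t) \big\|^2_{H_\varepsilon} + \big\| h_{\lambda_m}(t) \big\|^2_{H_\mu} \Big)
  \, = \, - \big( g_{\lambda_m}(t), e_{\lambda_m}(t) \big){}_H
  \quad\text{for a.e.\ $t \in [0,T_1]$}.
\end{equation*}

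Since $e_{\lambda_m}$ and $h_{\lambda_m}$, viewed as maps $[0,T_1] \to H$, are continuous with weak $t$-derivative in $L^2(0,T_1;H)$ by~(3.3), the functions $t \mapsto \|e_{\lambda_m}(t)\|^2_{H_\varepsilon}$ and $t \mapsto \|h_{\lambda_m}(t)\|^2_{H_\mu}$ are absolutely continuous, so the last identity may be integrated over $[0,t]$:
\begin{equation*}
  \frac{1}{2}
  \Big( \big\| e_{\lambda_m}(t) \big\|^2_{H_\varepsilon} + \big\| h_{\lambda_m}(t) \big\|^2_{H_\mu} \Big)
  + \int_0^t \big( g_{\lambda_m}(s), e_{\lambda_m}(s) \big){}_H \ds
  \, = \, \frac{1}{2}
  \Big( \big\| e_{\lambda_m}(0) \big\|^2_{H_\varepsilon} + \big\| h_{\lambda_m}(0) \big\|^2_{H_\mu} \Big)
\end{equation*}
for all $t \in [0,T_1]$. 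Then I would let $m \to \infty$. By the argument in Part~III of the proof of Theorem~3.1, applied to the forward averages, $e_{\lambda_m} \to \hat e$ and $h_{\lambda_m} \to \hat h$ in $C([0,T_1];H)$; since the $H_\varepsilon$- and $H_\mu$-norms are equivalent to the norm of $H$, the first term on the left converges to $\frac{1}{2}\big(\|\hat e(t)\|^2_{H_\varepsilon} + \|\hat h(t)\|^2_{H_\mu}\big)$ uniformly in $t$, and $e_{\lambda_m}(0) \to \hat e(0) = e_0$, $h_{\lambda_m}(0) \to \hat h(0) = h_0$ by~(3.2). Furthermore $g_{\lambda_m} \to g$ in $L^2(Q_T)$ by~(3.5) and $e_{\lambda_m} \to \hat e = e$ in $L^2(0,T_1;H)$, so $\int_0^t (g_{\lambda_m}, e_{\lambda_m})_H \ds \to \int_0^t (j(e),e)_H \ds$. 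Hence~(4.1) holds for every $t \in [0,T_1]$; as $T_1 \in {]0,T[}$ is arbitrary, it holds on $[0,T[$, and it extends to $t = T$ because both sides are continuous functions of $t$ on $[0,T]$ ($\hat e, \hat h \in C([0,T];H)$, and $s \mapsto (j(e),e)_H$ is in $L^1(0,T)$).

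The step I expect to be the main obstacle is the first one: legitimately using $e_{\lambda_m}(\cdot,t)$ and $h_{\lambda_m}(\cdot,t)$ as test functions and collapsing the two $\curl$-integrals into a single vanishing term. This is exactly what the pointwise assertions~(3.8)--(3.9), and hence the whole machinery of Section~3, were prepared for: without $e_{\lambda_m}(\cdot,t) \in V_0$ for a.e.\ $t$ one could not invoke the duality defining $V_0$ to identify $\int_\Omega (\curl e_{\lambda_m}) \cdot h_{\lambda_m} \dx$ with $\int_\Omega e_{\lambda_m} \cdot \curl h_{\lambda_m} \dx$. By contrast, the passage to the limit in the dissipation term is harmless here, since \emph{both} factors $g_{\lambda_m}$ and $e_{\lambda_m}$ converge strongly in $L^2(0,T_1;H)$; in particular no monotonicity of $j$ is required for the energy equality itself (it enters only afterwards, in the uniqueness part of the well-posedness).
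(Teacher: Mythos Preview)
Your proof is correct and follows essentially the same route as the paper: derive the Steklov-averaged energy identity from Lemma~3.1 (your integrated display is the paper's equation~(4.2)) and pass to the limit via the uniform convergence~(3.22) established in Part~III of the proof of Theorem~3.1. The only difference is at the endpoint $t=T$: the paper reaches it by repeating the computation with the backward averages of Lemma~3.2 on~$[T_0,T]$ and then matching with~(4.3) at~$t=T_0$, whereas you simply let $T_1 \nearrow T$ and invoke the continuity of both sides on~$[0,T]$---a legitimate shortcut, since $\hat e,\hat h \in C([0,T];H)$ is already granted by Theorem~3.1 and $s \mapsto (j(e),e)_H$ is integrable on~$[0,T]$.
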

\begin{proof}
  For notational simplicity, we write
  \begin{equation*}
    \hat{\E}(t)
    \, = \, \frac{1}{2}
    \Big( \big\| \hat{e}(t) \big\|{}^2_{H_\varepsilon}
    + \big\| \hat{h}(t) \big\|{}^2_{H_\mu} \Big),
    \quad t \in [0,T]
  \end{equation*}
  (cf.~\eqref{equality}; remember $\hat{e}(0) = e_0$, $\hat{h}(0) = h_0$).

  As in Section~\ref{continuity-in-time}, let $T_0$, $T_1$ be two real numbers
  such that $0 < T_0 < T_1 < T$, and let $0 < \lambda < \min \{T_0, T-T_1\}$.
  From Lemma~\ref{forward-identities} it follows that
  \begin{equation}
    \frac{1}{2} \Big( \| e_\lambda(t) \|^2_{H_\varepsilon}
    + \| h_\lambda(t) \|^2_{H_\mu} \Big)
    + \int_0^t (g_\lambda,e_\lambda)_H \ds
    \, = \, \frac{1}{2} \Big( \| e_\lambda(0) \|^2_{H_\varepsilon}
    + \| h_\lambda(0) \|^2_{H_\mu} \Big)
    \label{average-equality}
  \end{equation}
  for all $t \in [0,T_1]$
  ($g = j(\cdot,\cdot,e)$; cf.~the proof of Theorem~\ref{continuity}).

  Let $(\lambda_m)_{m \in \N}$ be any sequence of real numbers
  such that $0 < \lambda_m < \min \{T_0, T-T_1\}$ for all $m \in \N$,
  and $\lambda_m \to 0$ as $m \to \infty$
  (cf.~the proof of Theorem~\ref{continuity}, Part~II).
  Taking $\lambda = \lambda_m$ in~\eqref{average-equality} and
  observing~\eqref{convergence-forward} and~\eqref{continuous-representatives}
  we obtain upon letting tend $m \to \infty$ in~\eqref{average-equality}
  the equality
  \begin{equation}
    \hat{\E}(t)
    + \int_0^t (j(e),e)_H \ds
    \, = \, \hat{\E}(0)
    \quad\text{for all $t \in [0,T_1]$}.
    \label{first-equality}
  \end{equation}

  Next, using~Lemma~\ref{backward-identities} we find by an analogous reasoning
  (this time by the aid of~\eqref{convergence-backward}
  and~\eqref{continuous-representatives})
  \begin{equation*}
    \hat{\E}(t)
    + \int_{T_0}^t (j(e),e)_H \ds
    \, = \, \hat{\E}(T_0)
    \quad\text{for all $t \in [T_0,T]$}.
  \end{equation*}
  It follows that, for all $t \in [T_0,T]$,
  \begin{equation*}
    \hat{\E}(t)
    + \int_0^t (j(e),e)_H \ds
    \, = \, \hat{\E}(T_0)
    + \int_0^{T_0} (j(e),e)_H \ds
    \, = \, \hat{\E}(0)
    \quad\text{(by~\eqref{first-equality})}.
  \end{equation*}
  Whence, \eqref{energy-equality}.
\end{proof}
\begin{remark}
  In his seminal paper~\cite{friedrichs}, \textsc{K.~O.~Friedrichs}
  developed a theory of weak solutions for a large class
  of initial-boundary value problems for symmetric linear hyperbolic systems
  where he made use of energy integral identities.
  In this paper, the notion of weak solutions is introduced
  in terms of a limit of classical (resp.~strong) solutions
  of the initial-value problem under consideration.

  For \emph{linear Ohm laws} $j_1 = \sigma(x,t) \, e$
  (see Section~\ref{introduction} above),
  problem~\eqref{ampere}--\eqref{initial}
  is included in the work~\cite{friedrichs}.
\end{remark}
\begin{remark}
  Suppose that hypotheses~\eqref{complement}--\eqref{positivity} hold true.
  In addition, assume
  \begin{equation*}
    j(x,t,\xi) \cdot \xi
    \, \ge \, 0
    \quad\text{for all $(x,t,\xi) \in Q_T \times \R^3$}
  \end{equation*}
  (cf.~Examples~\ref{linear-ohm} and~\ref{nonlinear-ohm}
  in Section~\ref{introduction}).
  Then any solution $(e,h) \in L^2(Q_T)^3 \times L^2(Q_T)^3$
  of~\eqref{ampere-weak}, \eqref{faraday-weak}
  satisfies the \emph{energy inequality}
  \begin{equation}
    \frac{1}{2}
    \Big( \big\| \hat{e}(t) \big\|{}^2_{H_\varepsilon}
    + \big\| \hat{h}(t) \big\|{}^2_{H_\mu} \Big)
    \, \le \, \frac{1}{2}
    \Big( \| e_0 \|^2_{H_\varepsilon} + \| h_0 \|^2_{H_\mu} \Big)
    \quad\text{for all $t \in [0,T]$}
    \label{energy-inequality}
  \end{equation}
  (cf.~also~\cite[Corollary~7.6, p.~329]{fabrizio}).
  Thus, for current density fields $j = j_1 = \sigma(x,t) \, e$
  ($\sigma(x,t)$ being a symmetric non-negative $3 \times 3$ matrix
  with bounded measurable entries), the uniqueness of solutions
  of~\eqref{ampere-weak}, \eqref{faraday-weak}
  follows from~\eqref{energy-inequality}.
  We note that this uniqueness result is a special case
  of~Theorem~\ref{well-posedness}
  (well-posedness of~\eqref{ampere-weak}, \eqref{faraday-weak})
  provided the mapping $\xi \longmapsto j(\cdot,\cdot,\xi)$
  is monotone (cf.~condition~(b) in Section~\ref{introduction}).
\end{remark}
\begin{remark}
  Assume~\eqref{measurability}, \eqref{growth}
  and let $j = j_1 = \sigma(x) \, e$,
  where $\sigma(x) = (\sigma_{kl}(x))_{k,l = 1,2,3}$ $(x \in \Omega)$
  is \emph{any} matrix with bounded measurable entries.

  Let $(e,h) \in L^2(Q_T)^3 \times L^2(Q_T)^3$ be a weak solution
  of~\eqref{ampere}--\eqref{initial} with initial data
  \begin{equation*}
    e_0 \, = \, h_0 \, = \, 0
    \quad\text{a.e.~in $\Omega$}.
  \end{equation*}
  Then
  \begin{equation*}
    e \, = \, h \, = \, 0
    \quad\text{a.e.~in $Q_T$}.
  \end{equation*}
  This result has been proved in~\cite{naumann}
  by deriving an energy equality for the primitives
  \begin{equation*}
    \int_0^t e(\cdot,s) \ds,
    \quad \int_0^t h(\cdot,s) \ds,
    \quad t \in [0,T],
  \end{equation*}
  and then applying the Gronwall lemma
  (cf.~also~\cite[pp.~330--331]{fabrizio}, \cite[Ch.~3, \S~8.2]{magenes}).

  An analogous uniqueness result has been presented
  in~\cite[Ch.~VII, \S~4.3]{duvaut} the proof of which makes use
  of an approximation technique for weak solutions
  of~\eqref{ampere}--\eqref{initial} that differs from ours
  in Section~\ref{continuity-in-time}.
\end{remark}
From Theorem~\ref{energy-balance} we deduce
\begin{theorem}[Well-posedness]
  \label{well-posedness}
  Assume~\emph{\eqref{complement}--\eqref{growth}}
  and~\emph{\eqref{symmetry}, \eqref{positivity}}.
  In addition, suppose that
  \begin{equation}
    \big( j(x,t,\xi) - j(x,t,\eta) \big) \cdot \big( \xi - \eta \big)
    \, \ge \, 0
    \quad\text{for all $(x,t) \in Q_T$ and all $\xi$, $\eta \in \R^3$}
    \label{monotonicity} \tag{H7}
  \end{equation}
  \emph{(}cf.~condition~\emph{(b)} in Section~\ref{introduction}\,\emph{)}.
  
  Let $\big(e^{(k)},h^{(k)}\big) \in L^2(Q_T)^3 \times L^2(Q_T)^3$ $(k = 1,2)$
  be solutions of \emph{\eqref{ampere-weak}, \eqref{faraday-weak}}
  that correspond to initial data
  $\big(e_0{}^{(k)},h_0{}^{(k)}\big) \in L^2(\Omega)^3 \times L^2(\Omega)^3$
  $(k = 1,2)$, respectively.

  Then, for all $t \in [0,T]$,
  \begin{equation}
    \big\| e^{(1)}(t) - e^{(2)}(t) \big\|{}^2_{H_\varepsilon}
    + \big\| h^{(1)}(t) - h^{(2)}(t) \big\|{}^2_{H_\mu}
    \, \le \, \big\| e_0{}^{(1)} - e_0{}^{(2)} \big\|{}^2_{H_\varepsilon}
    + \big\| h_0{}^{(1)} - h_0{}^{(2)} \big\|{}^2_{H_\mu}.
    \label{data-to-solution-estimate}
  \end{equation}
  \emph{(}On the left side of~\emph{\eqref{data-to-solution-estimate}}
  the continuous representatives of $e^{(k)}, h^{(k)}$
  according to Theorem~\ref{continuity} are understood,
  where the symbol $\,\hat{}$ is omitted for notational simplicity.\emph{)}
\end{theorem}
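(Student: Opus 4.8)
The plan is to deduce the stability estimate (4.5) directly from the energy equality of Theorem~4.1, applied not to the two solutions separately but to their \emph{difference}. Set $E := e^{(1)} - e^{(2)}$, $H := h^{(1)} - h^{(2)}$, $E_0 := e_0^{(1)} - e_0^{(2)}$, $H_0 := h_0^{(1)} - h_0^{(2)}$. Every term in the integral identities (2.9), (2.10) is linear in the solution, in the initial datum, and in the test function, with the single exception of the nonlinear contribution $j_1(e)$; moreover the two solutions are built with the \emph{same} $j_0$ and $j_1$. Subtracting the identities for $(e^{(2)},h^{(2)})$ from those for $(e^{(1)},h^{(1)})$ therefore shows that $(E,H)$ satisfies (2.9), (2.10) with initial data $(E_0,H_0)$ and with $j(e)$ replaced throughout by the fixed function
\begin{equation*}
  g \, := \, j_1(\cdot,\cdot,e^{(1)}) - j_1(\cdot,\cdot,e^{(2)})
  \, = \, j(\cdot,\cdot,e^{(1)}) - j(\cdot,\cdot,e^{(2)}) ,
\end{equation*}
where the two expressions agree because the $j_0$-contributions cancel. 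By (H3) and $e^{(1)},e^{(2)} \in L^2(Q_T)^3$ we have $g \in L^2(Q_T)^3$, so $(E,H)$ is a weak solution of (1.1)--(1.4) for the auxiliary current density $j^{\mathrm{new}}$ with affine part $j_0^{\mathrm{new}} := g$ and nonlinear part $j_1^{\mathrm{new}} :\equiv 0$. Hypotheses (H1)--(H4) hold for this auxiliary problem (trivially for $j_1^{\mathrm{new}}$) and (H5), (H6) are assumed.

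Next I would apply Theorem~3.1 to the auxiliary problem: it yields a continuous representative $(\hat{E},\hat{H}) \in C([0,T];H) \times C([0,T];H)$ of $(E,H)$ with $\hat{E}(0) = E_0$ and $\hat{H}(0) = H_0$. Since the differences of the continuous representatives from Theorem~3.1 for $(e^{(1)},h^{(1)})$ and $(e^{(2)},h^{(2)})$ are also continuous representatives of $E$ and $H$, and two continuous representatives of one equivalence class coincide on $[0,T]$, these differences equal $\hat{E}$ and $\hat{H}$ --- precisely the quantities on the left-hand side of (4.5).

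I would then apply Theorem~4.1 to the weak solution $(E,H)$ of the auxiliary problem. Because $j_1^{\mathrm{new}} \equiv 0$, the dissipation term in the energy equality is $\int_0^t (g,E)_H \ds$, so the equality reads
\begin{equation*}
  \frac{1}{2} \Big( \big\| \hat{E}(t) \big\|^2_{H_\varepsilon} + \big\| \hat{H}(t) \big\|^2_{H_\mu} \Big)
  + \int_0^t (g,E)_H \ds
  \, = \, \frac{1}{2} \Big( \| E_0 \|^2_{H_\varepsilon} + \| H_0 \|^2_{H_\mu} \Big),
  \quad t \in [0,T] .
\end{equation*}
The one substantive ingredient is the sign of the cross term: for a.e.\ $(x,s) \in Q_T$ the integrand equals $\big( j(x,s,e^{(1)}) - j(x,s,e^{(2)}) \big) \cdot \big( e^{(1)} - e^{(2)} \big)(x,s)$, which is $\ge 0$ by the monotonicity hypothesis (H7). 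Hence $\int_0^t (g,E)_H \ds \ge 0$, and discarding this nonnegative term from the energy equality and multiplying by $2$ gives (4.5).

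I do not expect a genuine obstacle here: once Theorems~3.1 and~4.1 are available the argument is essentially bookkeeping, and (H7) is exactly what turns the energy equality for the difference into the asserted contraction. The two steps that deserve care are (i) verifying that subtraction really produces a weak solution in the precise sense of the definition --- in particular the $L^2$-forcing $g$ must be absorbed into the affine part $j_0$, not into $j_1$, since a nonzero $\xi$-independent term would violate the growth bound (H3) at $\xi = 0$ --- and (ii) the identification of the continuous representative of the difference with the difference of the continuous representatives. As a byproduct, taking $E_0 = H_0 = 0$ forces $\hat{E} \equiv \hat{H} \equiv 0$, hence $e^{(1)} = e^{(2)}$ and $h^{(1)} = h^{(2)}$ a.e.\ in $Q_T$; together with the existence theorem of Section~5 this establishes the well-posedness of (1.1)--(1.4).
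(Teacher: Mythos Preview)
Your proposal is correct and follows essentially the same route as the paper: form the difference, recognize it as a weak solution of an auxiliary problem with $j_0^{\mathrm{new}} = j(e^{(1)}) - j(e^{(2)})$ and $j_1^{\mathrm{new}} \equiv 0$, invoke the energy equality of Theorem~4.1, and drop the nonnegative cross term via~(H7). Your explicit remarks on why $g$ must go into the affine slot (to preserve (H3)) and on identifying the continuous representative of the difference with the difference of the continuous representatives are points the paper leaves implicit; one cosmetic issue is that your symbol $H$ for $h^{(1)}-h^{(2)}$ collides with the paper's $H = L^2(\Omega)^3$.
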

\begin{proof}
  We consider integral identities~\eqref{ampere-weak}, \eqref{faraday-weak}
  with $\big(e^{(1)},h^{(1)}\big)$ as well as $\big(e^{(2)},h^{(2)}\big)$
  in place of $(e,h)$, and form the differences of the integral identities
  so obtained. Writing
  \begin{equation*}
    e_0^* \, = \, e_0{}^{(1)} - e_0{}^{(2)},
    \quad h_0^* \, = \, h_0{}^{(1)} - h_0{}^{(2)}
  \end{equation*}
  and
  \begin{equation*}
    e^* \, = \, e^{(1)} - e^{(2)},
    \quad h^* \, = \, h^{(1)} - h^{(2)},
    \quad g^* \, = \, j\big(e^{(1)}\big) - j\big(e^{(2)}\big),
  \end{equation*}
  we obtain
  \begin{align}
    & \left\{
      \begin{aligned}
        & - \displaystyle \int_{Q_T}
          (\varepsilon e^*) \cdot \varphi \, \dot{\zeta} \dx \dt
          + \int_{Q_T}
          \big( {-}h^* \!\cdot \curl \varphi
          + g^* \!\cdot \varphi \big) \, \zeta \dx \dt
          \, = \, \int_\Omega
          (\varepsilon e_0^*) \cdot \varphi \, \zeta(0) \dx
        \\
        & \quad\text{for all $\varphi \in X$
          and all $\zeta \in C^1([0,T])$ such that $\zeta(T) = 0$},
      \end{aligned}
          \right.
          \label{ampere-difference}
    \\[\medskipamount]
    & \left\{
      \begin{aligned}
        & - \displaystyle \int_{Q_T}
        (\mu h^*) \cdot \psi \, \dot{\theta} \dx \dt
          + \int_{Q_T}
          e^* \!\cdot \curl \psi \, \theta \dx \dt
        \, = \, \int_\Omega
        (\mu h_0^*) \cdot \psi \, \theta(0) \dx
        \\
        & \quad\text{for all $\psi \in Y$
          and all $\theta \in C^1([0,T])$ such that $\theta(T) = 0$},
      \end{aligned}
          \right.
          \label{faraday-difference}
  \end{align}
  i.e. $(e^*,h^*) \in L^2(Q_T)^3 \times L^2(Q_T)^3$
  is a solution of~\eqref{ampere-weak}, \eqref{faraday-weak}
  with $j = j_0 + j_1$, $j_0 = g^*$ and $j_1 = 0$
  (cf.~\eqref{measurability}, \eqref{growth}).
  Hence, Theorem~\ref{energy-balance} applies to~\eqref{ampere-difference},
  \eqref{faraday-difference}.
  Then, the energy equality~\eqref{energy-equality} reads
  \begin{equation*}
    \frac{1}{2}
    \Big( \| e^*(t) \|^2_{H_\varepsilon}
    + \| h^*(t) \|^2_{H_\mu} \Big)
    + \int_0^t (g^*,e^*)_H \ds
    \, = \, \frac{1}{2}
    \Big( \| e_0^* \|^2_{H_\varepsilon} + \| h_0^* \|^2_{H_\mu} \Big)
    \quad\text{for all $t \in [0,T]$}.
  \end{equation*}
  Observing~\eqref{monotonicity} we obtain~\eqref{data-to-solution-estimate}.
  The proof is complete.
\end{proof}
\begin{remark}
  Theorem~\ref{well-posedness} represents a special case
  of the notion of well-posedness of evolution problems
  discussed in~\cite[pp.~404; 413]{picard}.
\end{remark}
\section{Existence of weak solutions via the Faedo-Galerkin method}
\label{faedo-galerkin}
\noindent
The Faedo-Galerkin method is widely used for solving evolution problems.
From the wealth of literature we only refer to
\cite[Ch.~3, \S\S~8.1--8.2]{magenes}, \cite[Ch.~2, \S~1.2]{lions}
and~\cite[Ch.~30, \S\S~1--3]{zeidler-b}.

In~\cite[Ch.~VII, \S\S~4.1--4.3]{duvaut} the authors used this method for
the proof of the existence of weak solutions of~\eqref{ampere}--\eqref{initial}
with \emph{linear Ohm laws} $j = j_1 = \sigma_0(x) \, e$.
The following theorem extends this result to the class of
\emph{nonlinear Ohm laws} we have introduced by hypotheses
\eqref{measurability}, \eqref{growth} (cf.~Section~\ref{weak-solutions}).
\begin{theorem}[Existence]
  \label{existence}
  Assume~\emph{\eqref{complement}--\eqref{growth}}
  and~\emph{\eqref{symmetry}--\eqref{monotonicity}}.
  Then for every $(e_0,h_0) \in H \times H$ there exists a solution
  \begin{equation*}
    (e,h) \in L^\infty(0,T;H) \times L^\infty(0,T;H)
  \end{equation*}
  of~\emph{\eqref{ampere-weak}, \eqref{faraday-weak}}
  which satisfies the estimate
  \begin{equation}
    \|e(t)\|^2_{H_\varepsilon} + \|h(t)\|^2_{H_\mu}
    \le c \left( \|e_0\|^2_{H_\varepsilon}
    + \|h_0\|^2_{H_\mu} + \|j_0\|^2_{L^2(Q_T)^3} \right) 
    \quad\text{for a.e.~$t \in [0,T]$},
    \label{a-priori-estimate}
  \end{equation}
  where $c = \const > 0$ depends on $c_1$ and $\varepsilon_*$
  from~\emph{\eqref{growth}} and~\emph{\eqref{monotonicity}}, respectively,
  and on~$T$.
\end{theorem}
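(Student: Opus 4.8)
The plan is to construct the solution by the Faedo--Galerkin method. The only genuinely delicate point will be the passage to the limit in the nonlinear term $j_1(e_n)$: the a priori estimates are only of $L^\infty(0,T;H)$-type, so there is no compactness in the space variable, and hypothesis~(H7) will have to be exploited through a Minty--Browder monotonicity argument that is fed by the energy equality of Theorem~4.1.

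First I would fix linearly independent families $\{a_k\}_{k\in\N}\subset V_0$ and $\{b_k\}_{k\in\N}\subset V$ whose linear spans are dense in $V_0$, resp.\ $V$ (hence also in $H$, since $C^\infty_c(\Omega)^3\subset V_0\cap V$ is dense in $H$), and look for approximate solutions $e_n(t)=\sum_{k=1}^n\alpha^n_k(t)\,a_k$, $h_n(t)=\sum_{k=1}^n\beta^n_k(t)\,b_k$ of the finite-dimensional system
\[
  \tfrac{\mathrm d}{\mathrm dt}(\varepsilon e_n(t),a_l)_H=(h_n(t),\curl a_l)_H-(j(e_n(t)),a_l)_H,\qquad
  \tfrac{\mathrm d}{\mathrm dt}(\mu h_n(t),b_l)_H=-(e_n(t),\curl b_l)_H
\]
($l=1,\dots,n$), with $e_n(0)$, $h_n(0)$ the $H_\varepsilon$-, resp.\ $H_\mu$-orthogonal projections of $e_0$, $h_0$ onto the spans of the first $n$ basis vectors. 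By~(H5), (H6) the Gram matrices $\big((\varepsilon a_k,a_l)_H\big)$ and $\big((\mu b_k,b_l)_H\big)$ are invertible, so this is an initial-value problem $\dot y=\Phi(t,y)$ whose right-hand side is, by~(H2), (H3), a Carath\'{e}odory function of $(t,y)$ with at most linear growth in $y$; a local absolutely continuous solution then exists by the Carath\'{e}odory-type result proved in the appendix.

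To obtain a priori bounds I would multiply the $l$-th equation of the first block by $\alpha^n_l$, the $l$-th of the second by $\beta^n_l$, sum over $l$ and add; the coupling terms cancel because $e_n(\cdot,t)\in V_0$ and $h_n(\cdot,t)\in V$ force $(h_n,\curl e_n)_H=(e_n,\curl h_n)_H$, and~(H5) then gives
\[
  \tfrac12\tfrac{\mathrm d}{\mathrm dt}\big(\|e_n(t)\|_{H_\varepsilon}^2+\|h_n(t)\|_{H_\mu}^2\big)=-(j(e_n(t)),e_n(t))_H\le|(j_0(t),e_n(t))_H|+c_1\|e_n(t)\|_H^2
\]
by~(H3). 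Using $\varepsilon_*\|e_n\|_H^2\le\|e_n\|_{H_\varepsilon}^2$, the bounds $\|e_n(0)\|_{H_\varepsilon}\le\|e_0\|_{H_\varepsilon}$, $\|h_n(0)\|_{H_\mu}\le\|h_0\|_{H_\mu}$, and Gronwall's lemma, this yields estimate~(5.1) for $(e_n,h_n)$ with a constant $c=c(c_1,\varepsilon_*,T)$, and in particular shows the Galerkin solution extends to $[0,T]$; moreover $(e_n)$, $(h_n)$ and, by~(H3), $(j_1(e_n))$ are bounded in $L^\infty(0,T;H)$. Passing to a subsequence I would take weak-$*$ limits $e_n\rightharpoonup e$, $h_n\rightharpoonup h$, $j_1(e_n)\rightharpoonup\tilde\chi$ in $L^\infty(0,T;H)$; testing the Galerkin identities against $\varphi(x)\zeta(t)$ (with $\varphi$ in the bases and $\zeta\in C^1([0,T])$, $\zeta(T)=0$), integrating by parts in $t$, and using density of the bases, one checks that $(e,h)$ satisfies~(2.9), (2.10) with the affine field $j_0+\tilde\chi$ in place of $j$, the initial data $e_0$, $h_0$ being inherited from $e_n(0)\to e_0$, $h_n(0)\to h_0$ in $H$.

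The hard part will be to identify $\tilde\chi=j_1(e)$. I would first upgrade the convergence to fixed times: for $\varphi$ in the union of the bases the maps $t\mapsto(\varepsilon e_n(t),\varphi)_H$ are uniformly Lipschitz (from the Galerkin equation and the $L^\infty$ bounds), so a diagonal Arzel\`a--Ascoli argument together with the uniform $H$-bound gives $e_n(t)\rightharpoonup\hat e(t)$ and $h_n(t)\rightharpoonup\hat h(t)$ in $H$ for \emph{every} $t\in[0,T]$, where $\hat e,\hat h\in C([0,T];H)$ are the continuous representatives of the limit problem supplied by Theorem~3.1; in particular $\|\hat e(T)\|_{H_\varepsilon}^2+\|\hat h(T)\|_{H_\mu}^2\le\liminf_n\big(\|e_n(T)\|_{H_\varepsilon}^2+\|h_n(T)\|_{H_\mu}^2\big)$. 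Evaluating the Galerkin energy identity at $t=T$, taking $\limsup_n$, and comparing with the energy equality of Theorem~4.1 applied to the (affine) limit problem, I expect to obtain $\limsup_n\int_0^T(j(e_n),e_n)_H\,\ds\le\int_0^T(j_0+\tilde\chi,e)_H\,\ds$. Then~(H7) yields $\int_0^T\big(j(e_n)-j(w),e_n-w\big)_H\,\ds\ge0$ for all $w\in L^2(Q_T)^3$; letting $n\to\infty$, using the last inequality together with the $L^2$-continuity of the Nemytskii map $w\mapsto j(w)$ (a consequence of~(H2), (H3)), one gets $\int_0^T\big(j_0+\tilde\chi-j(w),e-w\big)_H\,\ds\ge0$ for all $w$, and the standard Minty trick ($w=e-\lambda v$, divide by $\lambda>0$, let $\lambda\to0^+$) gives $j_0+\tilde\chi=j(e)$, i.e.\ $\tilde\chi=j_1(e)$. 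Hence $(e,h)$ is a weak solution of~(1.1)--(1.4), and~(5.1) survives the limit by weak lower semicontinuity of the weighted norms. The main obstacle is precisely this identification: with no spatial smoothing available, everything hinges on the monotonicity~(H7) and on the fact that the limit already obeys the \emph{exact} energy equality of Theorem~4.1, which is what makes the $\limsup$-inequality above tight.
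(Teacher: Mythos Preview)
Your proposal is correct and follows the same Faedo--Galerkin scheme as the paper: approximate system via Carath\'{e}odory existence, energy estimate plus Gronwall, weak-$*$ limits, identification of the terminal values, and the Minty trick driven by the energy equality of Theorem~4.1 applied to the limit problem (with the affine field $j_0+\tilde\chi$ in place of $j$, i.e.\ $j_1=0$).

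Two small remarks. First, the maps $t\mapsto(\varepsilon e_n(t),a_l)_H$ are not uniformly Lipschitz, because the $j_0$-contribution to their derivative is only $L^2$ in time; they are, however, uniformly equicontinuous (the $L^2$ integral of $\|j_0(\cdot)\|_H$ over short intervals is small uniformly), and that is all Arzel\`a--Ascoli requires. Second, the paper bypasses the Arzel\`a--Ascoli step altogether: it simply extracts weak limits $e_m(T)\rightharpoonup v$, $h_m(T)\rightharpoonup w$ in $H$ from the uniform bound~(5.12), and then identifies $v=\hat e(T)$, $w=\hat h(T)$ by inserting $\zeta,\theta\in C^1([0,T])$ with $\zeta(0)=\theta(0)=0$, $\zeta(T)=\theta(T)=1$ into the limit identities~(5.18), (5.19) and comparing with~(5.20), (5.21) via the integration-by-parts formula~(2.13). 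This is shorter, though your route has the side benefit of yielding weak convergence in $H$ at every $t\in[0,T]$, not just at $t=T$.
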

\noindent
For what follows we introduce more notations. The separability of $X$ and $Y$
implies the existence of sequences $(\varphi_k)_{k \in \N} \subset X$,
$(\psi_k)_{k \in \N} \subset Y$ such that
\begin{equation*}
  \text{$\{ \varphi_1, \dots, \varphi_m \}$ and $\{ \psi_1, \dots, \psi_m \}$
  are linearly independent for every $m \in \N$};
\end{equation*}
\begin{equation}
  \overline{\bigcup_{m=1}^\infty X_m} \, = \, X,
  \quad \overline{\bigcup_{m=1}^\infty Y_m} \, = \, Y,
  \label{separability}
\end{equation}
where
\begin{equation*}
  X_m \, := \, \hull \{ \varphi_1, \dots, \varphi_m \},
  \quad Y_m \, := \, \hull \{ \psi_1, \dots, \psi_m \}.
\end{equation*}
Without any loss of generality, we may assume that 
\begin{equation}
  (\varphi_k,\varphi_l)_{H_\varepsilon} \, = \, \delta_{kl},
  \quad (\psi_k,\psi_l)_{H_\mu} \, = \, \delta_{kl}
  \quad\text{for all $k, l \in \N$ ($\delta_{kl}$ Kronecker's delta)}.
  \label{orthogonality}
\end{equation}
\begin{remark}
  In order to satisfy condition~\eqref{orthogonality},
  each of the sequences $(\varphi_k)_{k \in \N} \subset X$
  and $(\psi_k)_{k \in \N} \subset Y$
  can always be orthonormalized by the Gram-Schmidt process.
  
  With regard to the computational aspect of electromagnetism,
  in~\cite[Chs.~5--7]{monk} one can find a comprehensive theory of spaces
  $X_m$ and $Y_m$ of finite elements, which are particularly well-suited
  to the numerical discretization of the Maxwell equations
  on polyhedral domains.

  More intrinsically, the harmonic analysis of electromagnetic waves
  leads to the study of sequences $(\varphi_k)_{k \in \N} \subset X$
  and $(\psi_k)_{k \in \N} \subset Y$ of eigensolutions of boundary value
  problems related to appropriate time-harmonic, electrostatic
  and magnetostatic Maxwell equations, see~\cite[Ch.~4, \S~2]{kirsch}
  and~\cite[Ch.~4, \S~5; Ch.~8, \S~2]{assous}
  for the case of $X = V_0$ and $Y = V$.
  These auxiliary problems are formulated in orthogonal summands
  of Helmholtz decompositions of $X$ and $Y$
  (see~\cite[Ch.~4, \S~1]{kirsch}, \cite[Ch.~IX, pp.~310--314]{dautray}
  and \cite[Ch.~3, \S~7]{assous}) in such a way
  that condition~\eqref{orthogonality} is automatically satisfied.
\end{remark}
\subsubsection*{Proof of Theorem~\ref{existence}}
We proceed in five steps.
\subsubsection*{Step~1.~Defining Faedo-Galerkin approximations}
For $m \in \N$ we define approximations by
\begin{equation*}
  e_m(t) \, := \, \sum_{k=1}^m a_{m,k}(t) \, \varphi_k,
  \quad h_m(t) \, := \, \sum_{k=1}^m b_{m,k}(t) \, \psi_k,
  \quad t \in [0,T],
\end{equation*}
where the real-valued functions $a_{m,k} = a_{m,k}(t)$, $b_{m,k} = b_{m,k}(t)$
will be determined by the following system of ordinary differential equations
\begin{align}
  \dot{a}_{m,k}(t)
  \, & = \, \big(\!\curl h_m(t) - j(e_m(t)), \varphi_k \big){}_H
       \label{ampere-galerkin}
  \\[\smallskipamount]
  \dot{b}_{m,k}(t)
  \, & = \, - \big(\!\curl e_m(t), \psi_k \big){}_H  
       \label{faraday-galerkin}
\end{align}
($t \in [0,T]$, $k = 1, \dots, m$).

To formulate initial conditions for $(a_{m,k}, b_{m,k})$,
we combine \eqref{separability} and the density of~$X$ and~$Y$ in~$H$
to obtain real numbers $(\alpha_{m,k}, \beta_{m,k})$ $(k = 1, \dots, m$)
such that
\begin{equation*}
  \sum_{k=1}^m \alpha_{m,k} \, \varphi_k \longrightarrow e_0,
  \quad \sum_{k=1}^m \beta_{m,k} \, \psi_k \longrightarrow h_0
  \quad\text{in $H$ as $m \to \infty$}.
\end{equation*}
We now complement system~\eqref{ampere-galerkin}, \eqref{faraday-galerkin}
by the initial conditions
\begin{equation}
  a_{m,k}(0) \, = \, \alpha_{m,k},
  \quad b_{m,k}(0) \, = \, \beta_{m,k}
  \quad (k = 1, \dots, m).
  \label{initial-galerkin}
\end{equation}
It follows
\begin{equation}
  e_m(0) \longrightarrow e_0,
  \quad h_m(0) \longrightarrow h_0
  \quad\text{in $H$ as $m \to \infty$}.
  \label{convergence-initial-galerkin}
\end{equation}

We establish the existence of real-valued, absolutely continuous functions
\begin{equation*}
  \big( a_{m,1}, \dots, a_{m,m}, b_{m,1}, \dots, b_{m,m} \big)
\end{equation*}
on the interval $[0,T]$ that satisfy equations~\eqref{ampere-galerkin},
\eqref{faraday-galerkin} for a.e.~$t \in [0,T]$
and attain initial values~\eqref{initial-galerkin}.

To this end, we introduce a mapping
\begin{equation*}
  f_m: [0,T] \times (\R^m \times \R^m) \longrightarrow \, \R^m \times \R^m
\end{equation*}
as follows. For $(t,\xi,\eta) \in [0,T] \times (\R^m \times \R^m)$ let
\begin{equation*}
  f_m(t,\xi,\eta) \, := \,
  \begin{pmatrix}
    \sum_{l=1}^m (\curl \psi_l, \varphi_1)_H \, \eta_l
    - \big( j \big( \cdot, t, \sum_{k=1}^m \xi_k \varphi_k \big),
        \varphi_1 \big){}_H
    \\[\smallskipamount]
    \vdots
    \\[\smallskipamount]
    \sum_{l=1}^m (\curl \psi_l, \varphi_m)_H \, \eta_l
    - \big( j \big( \cdot, t, \sum_{k=1}^m \xi_k \varphi_k \big),
        \varphi_m \big){}_H
    \\[\bigskipamount]
    - \sum_{l=1}^m (\curl \varphi_l, \psi_1)_H \, \xi_l
    \\[\smallskipamount]
    \vdots
    \\[\smallskipamount]
    - \sum_{l=1}^m (\curl \varphi_l, \psi_m)_H \, \xi_l
  \end{pmatrix}.
\end{equation*}
Defining
\begin{equation*}
  y_m \, := \, (a_m,b_m),
\end{equation*}
we may write~\eqref{ampere-galerkin}--\eqref{initial-galerkin} in the form
\begin{gather}
  \dot{y}_m(t)
  \, = \, f_m(t,y_m(t))
  \quad\text{for $t \in [0,T]$},
  \label{ode-galerkin}
  \\[\smallskipamount]
  y_m(0)
  \, = \, (\alpha_m, \beta_m)
  \label{initial-condition-galerkin}
\end{gather}
($\alpha_m$, $\beta_m$ as in~\eqref{initial-galerkin}).

The following properties of $f_m$ are readily seen:
\begin{itemize}
\item[(i)] $t \longmapsto f_m(t,\xi,\eta)$ is \emph{measurable}
  on $[0,T]$ for all $(\xi,\eta) \in \R^m \times \R^m$;
  \smallskip
\item[(ii)] $(\xi,\eta) \longmapsto f_m(t,\xi,\eta)$ is \emph{continuous}
  on $\R^m \times \R^m$ for a.e.~$t \in [0,T]$;
  \smallskip
\item[(iii)] there exists $k_m = \const > 0$ such that  
  \begin{equation*}
    |f_m(t,\xi,\eta)|
    \, \le \, k_m \big( \|j_0(\cdot,t)\|_H + |\xi| + |\eta| \big)
  \end{equation*}
  for a.e.~$t \in [0,T]$ and all $(\xi,\eta) \in \R^m \times \R^m$
\end{itemize}
(cf.~Appendix~\ref{ordinary} below).
Indeed, to verify (i), (ii) it is evidently sufficient
to note that the functions
\begin{equation*}
  \textstyle
  (t,\xi) \longmapsto
  \big( j \big( \cdot, t, \sum_{k=1}^m \xi_k \varphi_k \big),
  \varphi_l \big){}_H,
  \quad (t,\xi) \in [0,T] \times \R^m,
  \quad l = 1, \dots, m
\end{equation*}
satisfy (i), (ii).
This can be easily derived from~\eqref{measurability}, \eqref{growth} by
the aid of the Fubini theorem and the Lebesgue dominated convergence theorem.
Appealing once more to these hypotheses
one obtains the bounds on $|f_m|$ in~(iii).

We are now in a position to apply an existence result for solutions
to the Cauchy problem for ordinary differential equations
(cf.~Appendix~\ref{ordinary}, Theorem~\ref{linear-growth}).
From this result it follows that there exists
an absolutely continuous function $y_m:[0,T] \longrightarrow \R^m \times \R^m$
that satisfies system~\eqref{ode-galerkin} for a.e.~$t \in [0,T]$
and attains initial value~\eqref{initial-condition-galerkin}.
Thus, defining functions $(a_m,b_m)$ by
\begin{equation*}
  a_{m,k} \, := \, y_{m,k},
  \quad b_{m,l} \, := \, y_{m,m+l}
  \quad\text{for $k,l = 1, \dots, m$},
\end{equation*}
we obtain a solution of~\eqref{ampere-galerkin}--\eqref{initial-galerkin}.
\subsubsection*{Step~2.~A-priori estimates}
First, observing~\eqref{orthogonality} we may write~\eqref{ampere-galerkin},
\eqref{faraday-galerkin} in the form
\begin{align}
  \big(\dot{e}_m(s), \varphi_k\big){}_{H_\varepsilon}
  + \big({-}\curl h_m(s) + j(e_m(s)), \varphi_k\big){}_H
  \, & = \, 0,
       \label{ampere-faedo}
  \\[\smallskipamount]
  \big(\dot{h}_m(s), \psi_l\big){}_{H_\mu}
  + \big(\!\curl e_m(s), \psi_l \big){}_H  
  \, & = \, 0
       \label{faraday-faedo}
\end{align}
for a.e.~$s \in [0,T]$ ($m \in \N$; $k,l = 1, \dots, m$).
We multiply~\eqref{ampere-faedo} by $a_{m,k}(s)$, \eqref{faraday-faedo}
by $b_{m,l}(s)$, sum for $k,l = 1, \dots, m$,
make then use of hypothesis~\eqref{complement},
\begin{equation*}
  \int_\Omega (\curl e_m(s)) \cdot h_m(s) \ds
  \, = \, \int_\Omega e_m(s) \cdot \curl h_m(s) \ds,
  \quad s \in [0,T],
\end{equation*}
integrate the equations obtained in this way
over the interval $[0,t]$ ($t \in [0,T]$) and integrate by parts
with respect to $s$ the terms involving $\dot{e}_m(s)$ and $\dot{h}_m(s)$.
To estimate the integral $\int_0^t (j(e_m),e_m)_H \ds$,
we use hypotheses~\eqref{measurability}, \eqref{growth} and~\eqref{positivity}.
It follows 
\begin{align*}
  \|e_m(t)\|^2_{H_\varepsilon} + \|h_m(t)\|^2_{H_\mu}
  \, & = \, \|e_m(0)\|^2_{H_\varepsilon} + \|h_m(0)\|^2_{H_\mu}
       - 2 \int_0^t (j(e_m),e_m)_H \ds
  \\
     & \le \, \|e_m(0)\|^2_{H_\varepsilon} + \|h_m(0)\|^2_{H_\mu}
       + c_2 \Bigg( \|j_0\|^2_{L^2(Q_T)^3}
       + \int_0^t \|e_m\|^2_{H_\varepsilon} \ds \Bigg)
\end{align*}
for all $t \in [0,T]$ ($c_2 = \const > 0$ depending only on the constants
$c_1$ and $\varepsilon_*$ from~\eqref{growth} and~\eqref{positivity},
respectively). Thus, by the Gronwall lemma (cf.~Appendix~\ref{ordinary} below),
\begin{equation}
  \|e_m(t)\|^2_{H_\varepsilon} + \|h_m(t)\|^2_{H_\mu}
  \, \le \, c_3 \left( \|e_m(0)\|^2_{H_\varepsilon}
    + \|h_m(0)\|^2_{H_\mu} + \|j_0\|^2_{L^2(Q_T)^3} \right)
  \label{a-priori-estimate-galerkin}
\end{equation}
for all $t \in [0,T]$ and all $m \in \N$
($c_3 = \const > 0$ depending on $c_2$ as well as on $T$).
\subsubsection*{Step~3.~Passing to the limits as $m \to \infty$}
In view of~\eqref{convergence-initial-galerkin}
the right-hand side of~\eqref{a-priori-estimate-galerkin}
is uniformly bounded with respect to $m \in \N$.
Thus, from~\eqref{a-priori-estimate-galerkin} we conclude
that there exists a subsequence of $(e_m,h_m)$ (not relabelled) and elements
\begin{equation*}
  e, h \in L^\infty(0,T;H),
  \quad v, w \in H,
  \quad \chi \in L^2(Q_T)^3
\end{equation*}
such that
\begin{gather}
  e_m \longrightarrow e,
  \quad h_m \longrightarrow h
  \quad\text{weakly* in $L^\infty(0,T;H)$},
  \label{convergence-galerkin}
  \\[\smallskipamount]
  e_m(T) \longrightarrow v,
  \quad h_m(T) \longrightarrow w
  \quad\text{weakly in $H$},
  \label{convergence-final}
  \\[\smallskipamount]
  j(e_m) \longrightarrow \chi
  \quad\text{weakly in $L^2(Q_T)^3$}
  \label{convergence-current}
\end{gather}
as $m \to \infty$.
Moreover, passing to the limits in~\eqref{a-priori-estimate-galerkin}
as $m \to \infty$ we find~\eqref{a-priori-estimate} (with $c = c_3$).

Let $N \in \N$. Given $m > N$, in~\eqref{ampere-faedo}, \eqref{faraday-faedo}
we only consider equations with indices $k, l = 1, \dots, N$.
By the definition of $X_N, Y_N$, for a.e.~$t \in [0,T]$,
\begin{alignat}{2}
  \big(\dot{e}_m(t), \varphi\big){}_{H_\varepsilon}
  + \big({-}\curl h_m(t) + j(e_m(t)), \varphi\big){}_H
  \, & = \, 0
  & \quad & \text{for any $\varphi \in X_N$},
            \label{ampere-faedo-galerkin}
  \\[\smallskipamount]
  \big(\dot{h}_m(t), \psi\big){}_{H_\mu}
  + \big(\!\curl e_m(t), \psi\big){}_H  
  \, & = \, 0
  & \quad & \text{for any $\psi \in Y_N$}.
            \label{faraday-faedo-galerkin}
\end{alignat}
Next, let $\zeta, \theta \in C^1([0,T])$.
We multiply~\eqref{ampere-faedo-galerkin} by $\zeta(t)$,
\eqref{faraday-faedo-galerkin} by $\theta(t)$,
integrate over the interval $[0,T]$, integrate by parts
with respect to $t$ the terms involving $\dot{e}_m(t)$ and $\dot{h}_m(t)$
and use hypothesis~\eqref{complement}.
Applying~\eqref{convergence-initial-galerkin}
and~\eqref{convergence-galerkin}--\eqref{convergence-current}
we obtain upon letting tend $m \to \infty$
\begin{align}
  & \left\{
    \begin{aligned}
      & (v,\varphi)_{H_\varepsilon} \, \zeta(T)
        - (e_0,\varphi)_{H_\varepsilon} \, \zeta(0)
        - \int_0^T (e(t),\varphi)_{H_\varepsilon} \, \dot{\zeta}(t) \dt
      \\
      & + \int_0^T \big({-}(h(t),\curl \varphi)_H + (\chi(t),\varphi)_H \big)
        \, \zeta(t) \dt \, = \, 0
        \quad\text{for any $\varphi \in X_N$},
    \end{aligned}
        \right.
        \label{ampere-limit}
  \\[\medskipamount]
  & \left\{
    \begin{aligned}
      & (w,\psi)_{H_\mu} \, \theta(T)
        - (h_0,\psi)_{H_\mu} \, \theta(0)
        - \int_0^T (h(t),\psi)_{H_\mu} \, \dot{\theta}(t) \dt
      \\
      & + \int_0^T (e(t),\curl \psi)_H \, \theta(t) \dt \, = \, 0
        \quad\text{for any $\psi \in Y_N$}.
    \end{aligned}
        \right.
        \label{faraday-limit}
\end{align}
From~\eqref{separability} it follows that~\eqref{ampere-limit},
\eqref{faraday-limit} continue to hold true
for any $\varphi \in X$ resp.~$\psi \in Y$.

To proceed, let $\zeta, \theta \in C^1([0,T])$ be such that
$\zeta(T) = \theta(T) = 0$. Then~\eqref{ampere-limit}, \eqref{faraday-limit}
coincide with~\eqref{ampere-weak}, \eqref{faraday-weak}
($j_0 = \chi$, $j_1 = 0$ in~\eqref{ampere-weak}).
In addition, the continuous representatives $\hat{e}, \hat{h} \in C([0,T];H)$
in the equivalence classes $e, h \in L^\infty(0,T;H)$ attain
the initial values $\hat{e}(0) = e_0$, $\hat{h}(0) = h_0$ in $H$
and satisfy the \emph{energy equality}
\begin{equation}
  \frac{1}{2}
  \Big( \big\| \hat{e}(t) \big\|{}^2_{H_\varepsilon}
  + \big\| \hat{h}(t) \big\|{}^2_{H_\mu} \Big)
  + \int_0^t (\chi,e)_H \ds
  \, = \, \frac{1}{2}
  \Big( \| e_0 \|^2_{H_\varepsilon}
    + \| h_0 \|^2_{H_\mu} \Big)
  \quad\text{for all $t \in [0,T]$}
  \label{energy-equation-limit}
\end{equation}
(see Theorems~\ref{continuity} and~\ref{energy-balance}).
\subsubsection*{Step~4.~Proof of $v = \hat{e}(T)$, $w = \hat{h}(T)$}
We consider~\eqref{ampere-limit}, \eqref{faraday-limit}
with $\zeta, \theta \in C^1([0,T])$
satisfying $\zeta(0) = \theta(0) = 0$ and $\zeta(T) = \theta(T) = 1$.
It follows
\begin{align}
  & \left\{
    \begin{aligned}
      & (v,\varphi)_{H_\varepsilon}
        - \int_0^T (e(t),\varphi)_{H_\varepsilon} \, \dot{\zeta}(t) \dt
        + \int_0^T \big({-}(h(t),\curl \varphi)_H + (\chi(t),\varphi)_H \big)
        \, \zeta(t) \dt \, = \, 0
      \\
      & \; \text{for any $\varphi \in X$},
    \end{aligned}
        \right.
        \label{ampere-final-limit}
  \\[\medskipamount]
  & \left\{
    \begin{aligned}
      & (w,\psi)_{H_\mu}
        - \int_0^T (h(t),\psi)_{H_\mu} \, \dot{\theta}(t) \dt
        + \int_0^T (e(t),\curl \psi)_H \, \theta(t) \dt \, = \, 0
      \\
      & \; \text{for any $\psi \in Y$}.
    \end{aligned}
        \right.
        \label{faraday-final-limit}
\end{align}
Combining~\eqref{ampere-final-limit} and~\eqref{ampere-abstract}
($j_0 = \chi$, $j_1 = 0$ therein), for any $\varphi \in X$ we find
\begin{align*}
  (v,\varphi)_{H_\varepsilon}
  - \int_0^T (e(t),\varphi)_{H_\varepsilon} \, \dot{\zeta}(t) \dt
  \, & = \, \int_0^T \langle (\varepsilon e)'(t), \varphi \rangle_X
       \, \zeta(t) \dt
  \\
     & = \, (\hat{e}(T),\varphi)_{H_\varepsilon}
       - \int_0^T (e(t),\varphi)_{H_\varepsilon} \, \dot{\zeta}(t) \dt
       \quad \text{(by~\eqref{parts}, \eqref{embedding})}.
\end{align*}
Hence, $v = \hat{e}(T)$ in $H$.
The claim $w = \hat{h}(T)$ in $H$ follows analogously
by combining~\eqref{faraday-final-limit} and~\eqref{faraday-abstract}.
\subsubsection*{Step~5.~Proof of $\chi = j(e)$}
To begin with, we note that
\begin{align*}
  \limsup_{m \to \infty} \int_0^T (j(e_m),e_m)_H \ds
  \, & \le \, \frac{1}{2}
       \Big( \|e_0\|^2_{H_\varepsilon} + \|h_0\|^2_{H_\mu} \Big)
       - \frac{1}{2} \liminf_{m \to \infty} 
       \Big( \|e_m(T)\|^2_{H_\varepsilon} + \|h_m(T)\|^2_{H_\mu} \Big)
  \\
     & \le \, \frac{1}{2} \Big(
       \|e_0\|^2_{H_\varepsilon} + \|h_0\|^2_{H_\mu} \Big)
       - \frac{1}{2} \Big(
       \big\|\hat{e}(T)\big\|{}^2_{H_\varepsilon}
       + \big\|\hat{h}(T)\big\|{}^2_{H_\mu} \Big)
\end{align*}
(by~\eqref{convergence-final} and $v = \hat{e}(T)$, $w = \hat{h}(T)$
(see Step~4)).
Hence, using energy equality~\eqref{energy-equation-limit} for $t = T$, we get
\begin{equation}
  \limsup_{m \to \infty} \int_0^T (j(e_m),e_m)_H \ds
  \, \le \, \int_0^T (\chi,e)_H \ds
  \label{property-type-m}
\end{equation}

Finally, let $z \in L^2(Q_T)^3$ and $\lambda > 0$. The monontonicity
of $\xi \longmapsto j(\cdot,\cdot,\xi)$ (cf.~\eqref{monotonicity}) implies
\begin{equation*}
  \int_0^T \big( j(e_m) - j(e - \lambda z),
  e_m - (e - \lambda z )\big){}_H \ds 
  \, \ge \, 0.
\end{equation*}
Using~\eqref{convergence-galerkin}, \eqref{convergence-current}
and~\eqref{property-type-m} we find upon letting tend $m \to \infty$
and then dividing by $\lambda$
\begin{equation}
  \int_0^T \big(\chi - j(e - \lambda z), z\big){}_H \ds 
  \, \ge \, 0.
  \label{minty-trick}
\end{equation}
Now, hypotheses~\eqref{measurability}, \eqref{growth} allow us to make use
of the Lebesgue dominated convergence theorem for the passage to limit
as $\lambda \to 0$ in~\eqref{minty-trick}. It follows
\begin{equation*}
  \chi = j(e)
  \quad\text{a.e.~in $Q_T$}.
\end{equation*}
The proof of Theorem~\ref{existence} is complete.
\begin{remark}
  The uniqueness
  of solutions $(e,h)$ of~\eqref{ampere-weak}, \eqref{faraday-weak}
  (cf. Section~\ref{energy}) implies the convergence of the \emph{whole}
  sequence of Faedo-Galerkin approximations $(e_m,h_m)$ to $(e,h)$.
\end{remark}
\begin{remark}
  We note that the mapping $j:L^2(Q_T)^3 \longrightarrow L^2(Q_T)^3$
  is a special case of an \emph{operator of type} $(M)$. Our above reasoning
  for proving $\chi = j(e)$ is a variant of the well-known ``Minty trick''
  (see~\cite[p.~173]{lions}, \cite[p.~474]{zeidler-b}).
\end{remark}
\appendix
\section{On the solvability of the Cauchy problem \\
  for an ordinary differential equation}
\label{ordinary}
\noindent
In this appendix, we prove the existence of a solution of the Cauchy problem
\begin{equation}
  \dot{y}(t) \, = \, f(t,y(t))
  \quad\text{for $t \in [0,T]$},
  \quad y(0) \, = \, y_0
  \label{cauchy-problem}
\end{equation}
of \textsc{C. Carath\'{e}odory}~\cite[\S\S~576--592]{caratheodory}
($0 < T < +\infty$, $y_0 \in \R^n$).
For this, we impose on the function $f:[0,T] \times \R^n \longrightarrow \R^n$
the conditions
\begin{align}
  & \text{$t \longmapsto f(t,\xi)$
  is measurable on $[0,T]$ for all $\xi \in \R^n$},
    \tag{a}
  \\[\smallskipamount]
  & \text{$\xi \longmapsto f(t,\xi)$
    is continuous on $\R^n$ for a.e.~$t \in [0,T]$}.
    \tag{b}
\end{align}
From~(a), (b) it follows that for any measurable function
$y:[0,T] \longrightarrow \R^n$ the function
\begin{equation*}
  t \longmapsto f(t,y(t)),
  \quad t \in [0,T]
\end{equation*}
is measurable on $[0,T]$ (see~\cite[p.~665]{caratheodory},
\cite[p.~195]{kamke}). Functions that satisfy conditions~(a), (b)
are usually called \emph{Carath\'{e}odory functions}.
\begin{theorem}
  \label{uniform-integrability}
  Let $f:[0,T] \times \R^n \to \R^n$ satisfy conditions
  \emph{(a), (b)} and suppose that there exists a non-negative integrable
  function $A$ defined on $[0,T]$ such that
  \begin{equation*}
    |f(t,\xi)| \, \le \, A(t)
    \quad\text{for all $(t,\xi) \in [0,T] \times \R^n$}.
  \end{equation*}
  Then, for every $y_0 \in \R^n$ there exists an absolutely continuous
  function $y:[0,T] \longrightarrow \R^n$ that fulfills the equation
  in~\emph{\eqref{cauchy-problem}} for a.e.~$t \in [0,T]$
  and attains the initial value $y(0) = y_0$.
\end{theorem}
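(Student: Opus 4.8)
The plan is to obtain a solution of~(A.1) as the uniform limit of explicitly constructed delayed Euler approximations, exploiting the integrable majorant $A$ to produce equicontinuity and the Carath\'{e}odory structure of $f$ to pass to the limit inside the integral. Throughout I use the elementary equivalence: an absolutely continuous function $y:[0,T] \to \R^n$ solves~(A.1) if and only if
\[
  y(t) \, = \, y_0 + \int_0^t f(s,y(s)) \ds
  \quad\text{for all $t \in [0,T]$};
\]
here $s \mapsto f(s,y(s))$ is measurable, being a Carath\'{e}odory function composed with the measurable function $y$, and it is integrable because it is dominated by $A \in L^1(0,T)$, so the right-hand side is a well-defined absolutely continuous function of~$t$.

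First I would construct the approximations. For $m \in \N$ put $\varepsilon_m := T/m$, set $y_m(t) := y_0$ for $t \in [-\varepsilon_m, 0]$, and define $y_m$ recursively on the consecutive intervals $[(k-1)\varepsilon_m, k\varepsilon_m]$, $k = 1, \dots, m$, by
\[
  y_m(t) \, := \, y_0 + \int_0^t f\big(s, y_m(s - \varepsilon_m)\big) \ds .
\]
On each of these intervals the shifted argument $s \mapsto y_m(s - \varepsilon_m)$ has already been defined, and is continuous, on the preceding interval, so $s \mapsto f(s, y_m(s - \varepsilon_m))$ is measurable there and dominated by $A$; hence the recursion makes sense and produces an absolutely continuous function $y_m$ on $[0,T]$ with $y_m(0) = y_0$.

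Next I would establish compactness. From $|f| \le A$ one obtains $|y_m(t)| \le |y_0| + \int_0^T A \ds =: M$ for all $m$ and all $t$, and, for $0 \le t_1 \le t_2 \le T$,
\[
  |y_m(t_2) - y_m(t_1)| \, \le \, \int_{t_1}^{t_2} A(s) \ds .
\]
Since $t \mapsto \int_0^t A \ds$ is uniformly continuous on $[0,T]$ (absolute continuity of the Lebesgue integral), the family $(y_m)_{m \in \N}$ is uniformly bounded and equicontinuous, so the Arzel\`{a}--Ascoli theorem yields a subsequence (not relabelled) converging uniformly on $[0,T]$ to a continuous function $y$ with $y(0) = y_0$.

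Finally I would pass to the limit in the identity $y_m(t) = y_0 + \int_0^t f(s, y_m(s - \varepsilon_m)) \ds$. For each fixed $s \in {]0,T]}$ one has $y_m(s - \varepsilon_m) \to y(s)$, since $|y_m(s - \varepsilon_m) - y_m(s)| \le \int_{\max\{s - \varepsilon_m,\, 0\}}^{s} A(\tau) \dtau \to 0$ while $y_m(s) \to y(s)$; because $\xi \mapsto f(s, \xi)$ is continuous for a.e.~$s$, it follows that $f(s, y_m(s - \varepsilon_m)) \to f(s, y(s))$ for a.e.~$s \in [0,T]$. The majorant $A \in L^1(0,T)$ then permits the Lebesgue dominated convergence theorem, and letting $m \to \infty$ gives $y(t) = y_0 + \int_0^t f(s, y(s)) \ds$ for all $t \in [0,T]$; hence $y$ is absolutely continuous, attains $y(0) = y_0$, and differentiation a.e.~yields $\dot{y}(t) = f(t, y(t))$ for a.e.~$t \in [0,T]$. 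The step I expect to require the most care is precisely this limit passage, where one must argue the pointwise convergence $y_m(s - \varepsilon_m) \to y(s)$ carefully, including near $s = 0$ where $y_m(s - \varepsilon_m) = y_0$, and combine it with the continuity of $f$ in its second variable on the full-measure set where~(b) holds; the construction of the $y_m$, the measurability of the shifted integrand, and the equicontinuity estimate are all routine once the delayed Euler scheme has been set up.
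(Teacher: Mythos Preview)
Your argument via Tonelli's delayed (retarded) Euler approximations is correct and self-contained: the recursive construction is well-defined, the equicontinuity estimate from the integrable majorant~$A$ is exactly what is needed for Arzel\`{a}--Ascoli, and the pointwise-plus-dominated-convergence passage to the limit is handled carefully, including the boundary case near $s=0$.

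The paper, however, does \emph{not} supply its own proof of this theorem; it simply cites the classical sources Carath\'{e}odory~[6, pp.~668--672, Satz~2] and Kamke~[18, pp.~193--197, Satz~1], remarking that those arguments yield a solution on the whole interval $[0,T]$ in one step. Your delayed-Euler scheme is one of the standard routes to Carath\'{e}odory's theorem and is in the same spirit as the cited proofs (approximate, extract by compactness, pass to the limit by dominated convergence). So there is nothing to correct; you have in fact furnished a complete proof where the paper only gives a reference.
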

For proofs see~\cite[pp.~668--672, Satz~2]{caratheodory}
as well as~\cite[pp.~193--197, Satz~1]{kamke}.
We note that these proofs yield in one step the existence
of a solution of~\eqref{cauchy-problem} on the \emph{whole} interval $[0,T]$.
In~\cite[pp.~43--44, Thm.~1.1]{coddington},
the authors prove an existence result for~\eqref{cauchy-problem}
on some subinterval $[0,T_0]$ ($0 < T_0 \le T$).

We now present an extension of Theorem~\ref{uniform-integrability}
for functions~$f$ with a more general growth with respect to~$(t,\xi)$.
This result implies straightforwardly the existence of Faedo-Galerkin
approximations we used in the proof of Theorem~\ref{existence}.
\begin{theorem}
  \label{linear-growth}
  Let $f:[0,T] \times \R^n \longrightarrow \R^n$ satisfy conditions
  \emph{(a), (b)} and suppose that there are a non-negative integrable
  function $A$ defined on $[0,T]$, and $C_0 = \const > 0$ such that
  \begin{equation}
    |f(t,\xi)| \, \le \, A(t) + C_0 |\xi|
    \quad\text{for all $(t,\xi) \in [0,T] \times \R^n$}.
    \tag{c}
  \end{equation}
  Then the conclusion of Theorem~\ref{uniform-integrability} holds true.
\end{theorem}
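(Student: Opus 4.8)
The plan is to reduce Theorem~A.2 to Theorem~A.1 by a truncation argument together with an a~priori bound furnished by the Gronwall lemma. First I would introduce, for each $R > 0$, the radial retraction $\pi_R : \R^n \longrightarrow \R^n$ defined by $\pi_R(\xi) = \xi$ if $|\xi| \le R$ and $\pi_R(\xi) = R\,\xi/|\xi|$ if $|\xi| > R$. This map is (globally Lipschitz) continuous and satisfies $|\pi_R(\xi)| \le \min\{|\xi|, R\}$; hence the truncated right-hand side
\begin{equation*}
  f_R(t,\xi) \, := \, f(t, \pi_R(\xi)),
  \quad (t,\xi) \in [0,T] \times \R^n,
\end{equation*}
is again a Carath\'{e}odory function (composition of the continuous map $\pi_R$ with $f$) and obeys $|f_R(t,\xi)| \le A(t) + C_0 R$, the majorant on the right being integrable over $[0,T]$. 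Therefore Theorem~A.1 applies to $f_R$: for the given $y_0 \in \R^n$ there exists an absolutely continuous $y_R : [0,T] \longrightarrow \R^n$ with $\dot{y}_R(t) = f_R(t, y_R(t))$ for a.e.~$t \in [0,T]$ and $y_R(0) = y_0$.

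Next I would extract from hypothesis~(c) an estimate on $|y_R|$ that does \emph{not} depend on $R$. Using $|\pi_R(\xi)| \le |\xi|$ one gets, for a.e.~$t \in [0,T]$,
\begin{equation*}
  |\dot{y}_R(t)| \, = \, \big| f(t, \pi_R(y_R(t))) \big|
  \, \le \, A(t) + C_0 \, |\pi_R(y_R(t))|
  \, \le \, A(t) + C_0 \, |y_R(t)|,
\end{equation*}
whence, since $y_R$ is absolutely continuous,
\begin{equation*}
  |y_R(t)| \, \le \, |y_0| + \int_0^T A(s) \ds
  + C_0 \int_0^t |y_R(s)| \ds,
  \quad t \in [0,T].
\end{equation*}
As $t \mapsto |y_R(t)|$ is continuous, the Gronwall lemma yields
\begin{equation*}
  |y_R(t)| \, \le \,
  \Big( |y_0| + \int_0^T A(s) \ds \Big) \, e^{C_0 T}
  \, =: \, M
  \quad\text{for all $t \in [0,T]$},
\end{equation*}
with $M$ depending only on $|y_0|$, $\|A\|_{L^1(0,T)}$, $C_0$ and $T$.

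Finally I would fix $R := M$ (any choice $R \ge M$ serves). Then $|y_R(t)| \le M = R$ for every $t \in [0,T]$, so $\pi_R(y_R(t)) = y_R(t)$ and consequently
\begin{equation*}
  \dot{y}_R(t) \, = \, f_R(t, y_R(t))
  \, = \, f(t, \pi_R(y_R(t))) \, = \, f(t, y_R(t))
  \quad\text{for a.e.~$t \in [0,T]$}.
\end{equation*}
Thus $y := y_R$ is the required absolutely continuous solution of~(A.1) on the whole interval $[0,T]$. There is no genuine obstacle here: the crux is simply that the Gronwall bound~$M$ is independent of the truncation level, which is what permits the ``untruncation'' step; the only point needing a word of justification is that $f_R$ inherits properties~(a), (b) and that its majorant lies in $L^1(0,T)$, and this is immediate since $\pi_R$ is continuous and $A + C_0 R$ is integrable on the finite interval $[0,T]$.
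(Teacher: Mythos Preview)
Your proof is correct and follows essentially the same route as the paper's: truncate $f$ via a radial retraction so that Theorem~A.1 applies, then use the linear growth~(c) together with the Gronwall lemma to obtain an a~priori bound on the truncated solution that is independent of the truncation radius, and conclude that the truncation is inactive. The only cosmetic differences are that the paper centres the retraction at $y_0$ rather than at the origin and fixes the radius $r$ in advance (chosen larger than the anticipated Gronwall bound) before applying Theorem~A.1, whereas you first derive the universal bound $M$ and then select $R=M$; neither variant offers a real advantage over the other.
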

For proving this result we will make use of the following
\begin{lemma*}[Gronwall]
  Let $c_1, c_2$ be non-negative constants.
  Let $u$ be a non-negative integrable function on $[0,T]$ such that
  \begin{equation*}
    u(t) \, \le \, c_1 + c_2 \int_0^t u(s) \ds
    \quad\text{for all $t \in [0,T]$}.
  \end{equation*}
  Then,
  \begin{equation*}
    u(t) \, \le \, c_1 \big( 1 + c_2 t \exp(c_2 t) \big)
    \quad\text{for all $t \in [0,T]$}.
  \end{equation*}
\end{lemma*}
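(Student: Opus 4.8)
The plan is to reduce the integral inequality to a differential inequality for the antiderivative of $u$, and then to integrate an explicit integrating factor, arranging the estimates so that a deliberately crude bound produces exactly the factor $c_2 t \exp(c_2 t)$ that appears in the claim. First I would set $w(t) := \int_0^t u(s)\ds$ for $t \in [0,T]$. Since $u$ is integrable, $w$ is absolutely continuous on $[0,T]$ with $w(0) = 0$ and $\dot w(t) = u(t)$ for a.e.~$t$. The hypothesis, valid for \emph{every} $t$, reads $u(t) \le c_1 + c_2 w(t)$, so inserting $\dot w = u$ yields the differential inequality
\[
  \dot w(t) \, \le \, c_1 + c_2 w(t) \quad \text{for a.e.~$t \in [0,T]$}.
\]

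Next I would introduce $g(t) := \exp(-c_2 t)\, w(t)$, which is again absolutely continuous with $g(0) = 0$. Differentiating and using the differential inequality,
\[
  \dot g(t) \, = \, \exp(-c_2 t)\big( \dot w(t) - c_2 w(t) \big)
  \, \le \, c_1 \exp(-c_2 t) \, \le \, c_1 \quad \text{for a.e.~$t$},
\]
where the final step uses only $\exp(-c_2 t) \le 1$, valid since $c_2 \ge 0$ and $t \ge 0$. Because $g$ is absolutely continuous, the fundamental theorem of calculus gives $g(t) = \int_0^t \dot g(s)\ds \le c_1 t$, hence $w(t) \le c_1 t \exp(c_2 t)$. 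Substituting this back into the hypothesis yields $u(t) \le c_1 + c_2 w(t) \le c_1\big(1 + c_2 t \exp(c_2 t)\big)$, which is the assertion. (When $c_2 = 0$ the conclusion is immediate from the hypothesis, and the argument above degenerates correctly.)

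There is no genuine analytic obstacle here; the one point deserving care is measure-theoretic rather than analytic. Since $u$ is merely integrable, both $\dot w = u$ and the bound on $\dot g$ hold only almost everywhere, so the passage from the pointwise a.e.~estimate on $\dot g$ to the estimate on $g$ must rest on the absolute continuity of $g$ (equivalently, on the fundamental theorem of calculus for absolutely continuous functions), not on any classical mean value argument. I would also point out that using the crude bound $\exp(-c_2 s) \le 1$, rather than integrating $c_1 \exp(-c_2 s)$ exactly, is precisely what produces the stated form $c_1\big(1 + c_2 t \exp(c_2 t)\big)$ in place of the sharper classical bound $c_1 \exp(c_2 t)$; the two are reconciled by the elementary inequality $\exp(x)(1 - x) \le 1$ for $x \ge 0$.
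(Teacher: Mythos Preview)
Your proof is correct. The argument via the absolutely continuous antiderivative $w(t)=\int_0^t u(s)\,\mathrm{d}s$, the integrating factor $\exp(-c_2 t)$, and the deliberately crude bound $\exp(-c_2 s)\le 1$ is clean and yields exactly the stated constant $c_1\bigl(1+c_2 t\exp(c_2 t)\bigr)$; your remark about absolute continuity being the right substitute for a mean-value argument is also to the point.

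As for comparison with the paper: there is nothing to compare. The paper does not prove this lemma at all; it is simply stated as the classical Gronwall inequality and then invoked in the proof of Theorem~A.2 and in Step~2 of the proof of Theorem~5.1. Your write-up therefore supplies what the paper omits.
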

\begin{proof}[Proof of Theorem~\ref{linear-growth}]
  Fix any real number
  \begin{equation*}
    r > \big(1 + C_0 T \exp(C_0 T) \big)
    \int_0^T (A(t) + C_0 |y_0|) \dt
  \end{equation*}
  and, for any $(t,\xi) \in [0,T] \times \R^n$, define
  \begin{equation*}
    f_r(t,\xi) \, := \,
    \begin{cases}
      f(t,\xi)
      & \text{if $|\xi - y_0| \le r$}, \\
      f\!\left(t,y_0 + r \, \displaystyle\frac{\xi - y_0}{|\xi - y_0|}\right)
      & \text{if $|\xi - y_0| > r$}
    \end{cases}
  \end{equation*}
  (cf.~\cite[p.~198]{kamke}).
  The function $f_r$ satisfies conditions~(a), (b).
  From~(c) it follows
  \begin{align}
    |f_r(t,\xi)|
    \, & \le \, A(t) + C_0 (|y_0| + r), 
         \label{estimate-uniform}
    \\[\smallskipamount]
    |f_r(t,\xi)|
    \, & \le \, A(t) + C_0 (|y_0| + |\xi - y_0|)
         \label{estimate-growth}
  \end{align}
  for all $(t,\xi) \in [0,T] \times \R^n$.

  Observing~\eqref{estimate-uniform}, from Theorem~\ref{uniform-integrability}
  we infer the existence of an absolutely continuous function
  $y_r:[0,T] \longrightarrow \R^n$ such that
  \begin{equation*}
    y_r(t) \, = \, y_0 + \int_0^t f_r(s,y_r(s)) \ds
    \quad\text{for all $t \in [0,T]$}.
  \end{equation*}
  By~\eqref{estimate-growth},
  \begin{equation*}
    |y_r(t) - y_0|
    \, \le \, \int_0^t (A(s) + C_0 |y_0|) \ds
    + C_0 \int_0^t |y_r(s) - y_0| \ds
    \quad\text{for all $t \in [0,T]$}.
  \end{equation*}
  Thus, by the Gronwall lemma,
  \begin{equation*}
    |y_r(t) - y_0|
    \, \le \, \int_0^T (A(s) + C_0 |y_0|) \ds
    \, \big(1 + C_0 T \exp(C_0 T) \big)
    \, \le \, r
  \end{equation*}
  and therefore
  \begin{equation*}
    f_r(t,y_r(t)) \, = \, f(t,y_r(t))
    \quad\text{for all $t \in [0,T]$}.
  \end{equation*}
  Hence, the function $y := y_r$ satisfies~\eqref{cauchy-problem}
  for a.e.~$t \in [0,T]$, and $y(0) = y_0$.
  The proof of the theorem is complete.
\end{proof}
Finally, under significantly more general growth conditions on~$f$
than~(c) above, the existence of a solution of~\eqref{cauchy-problem}
on a subinterval $[0,T^*]$ ($0 < T^* < T$)
has been proved in~\cite[pp.~681--682, Satz~6]{caratheodory}
and~\cite[pp.~197--199, Satz~2]{kamke}.
For continuous functions $f:[0,T] \times \R^n \longrightarrow \R^n$
which satisfy slightly more general growth conditions than~(c) above,
the proof of the existence of a solution of~\eqref{cauchy-problem}
on the \emph{whole} interval $[0,T]$ has been formulated as Problem~5
in~\cite[p.~61]{coddington}.
\begin{acknowledgement*}
  The authors are indebted to the referees for their helpful remarks
  which led to improvements of our paper.
\end{acknowledgement*}
\end{document}